\newcommand{\B}{\mathbb{B}}
\newcommand{\N}{\mathbb{N}}
\newcommand{\D}{\mathbb{D}}
\newcommand{\R}{\mathbb{R}}
\newcommand{\bx}{\bar{x}}
\newcommand{\Min}{\operatornamewithlimits{minimize}}
\newcommand{\Max}{\operatornamewithlimits{maximize}}
\newcommand{\MaxVal}{\operatornamewithlimits{maximum}}
\newcommand{\Tr}{\operatorname{Tr}}
\newcommand{\Hausd}{\mathbb{H}}
\newcommand{\conv}{\operatorname{conv}}
\DeclareMathOperator*{\argmin}{argmin}
\DeclareMathOperator*{\argmax}{argmax}
\def\approxleq{ \kern3pt \mbox{\raisebox{.6ex}{$<$}} \kern-8pt \mbox{\raisebox{-.6ex}{$\sim$}} \kern5pt}
\def\dom{\textup{dom}}
\def\bx{\bar{x}}
\def\hx{\hat{x}}
\def\by{\bar{y}}
\def\dist{{\textup{dist}}}
\newlength{\len}
\declaretheoremstyle[bodyfont=\sl]{normalbody}
\theoremstyle{definition}
\newtheorem{theorem}{Theorem}
\newtheorem{proposition}{Proposition}
\newtheorem{lemma}{Lemma}
\newtheorem{corollary}{Corollary}
\newtheorem{remark}{Remark}
\newtheorem{assumption}{Assumption}
\newtheorem{definition}{Definition}
\newtheorem{example}{Example}
\title{Subgradient Regularization: A Descent-Oriented\\
Subgradient Method for Nonsmooth Optimization}
\author{Hanyang Li
\thanks{Department of Industrial Engineering and Operations Research, University of California, Berkeley, CA 94720, USA (\href{mailto:hanyang_li@berkeley.edu}{hanyang\_li@berkeley.edu})}
\and Ying Cui
\thanks{Department of Industrial Engineering and Operations Research, University of California, Berkeley, CA 94720, USA  (\href{mailto:yingcui@berkeley.edu}{yingcui@berkeley.edu})}
}
\begin{document}
\bibliographystyle{plain}
\maketitle

\begin{abstract}
In nonsmooth optimization, a negative subgradient is not necessarily a descent direction, making the design of convergent descent methods based on  zeroth-order and first-order information a challenging task. The well-studied bundle methods and gradient sampling algorithms construct descent directions by aggregating subgradients at nearby points in  seemingly different ways, and are often complicated or lack deterministic guarantees. 
In this work, we identify a unifying principle behind these approaches, and develop a general framework of descent methods under the abstract principle that provably converge to stationary points. Within this framework, we introduce a simple yet effective technique, called subgradient regularization, to generate stable descent directions for a broad class of nonsmooth marginal functions, including finite maxima or minima of smooth functions. 
When applied to the composition of a convex function with a smooth map, the method naturally recovers the prox-linear method and, as a byproduct, provides  a new dual interpretation of this classical algorithm.
Numerical experiments demonstrate the effectiveness of our methods on several challenging classes of nonsmooth optimization problems, including  the minimization of Nesterov's nonsmooth Chebyshev-Rosenbrock function.

\paragraph{Keywords:} Nonsmooth, Subdifferential, Stable Descent Directions, Linear Convergence.

\end{abstract}

\section{Introduction}

We consider the minimization of a nonconvex, nonsmooth and locally Lipschitz continuous function $f: \R^n \to \R$. The Clarke subdifferential of $f$, denoted by $\partial f$, generalizes the concept of the subdifferential  in convex analysis and reduces to the gradient when $f$ is smooth. A point $\bx$ is said to be a stationary point of $f$ if $0 \in \partial f(\bx)$. 
In nonsmooth optimization, it is well known that an arbitrary negative subgradient $v\in \partial f(x)$ may fail to yield a descent direction at $x$. 
While a descent direction can be obtained by using the minimal norm subgradient
\begin{equation}\label{eq:Steepest_descent}
   g_x \triangleq -\argmin_{v \in \partial f(x)} \|v\|,
\end{equation}
the resulting steepest descent method
often suffers from  zigzagging behavior  and may even converge to non-stationary points; see, e.g.,~\cite[Section 2.2]{hiriart1996convex}. These issues arise from the discontinuity of the mapping $x \mapsto g_x$, posing a fundamental challenge in the design of efficient descent methods for nonsmooth minimization.

To develop fast descent methods  with convergence guarantees, significant efforts have been devoted to constructing stable descent directions using enlarged subdifferentials,  defined in an extended space $(x, \epsilon) \in \R^n \times (0, +\infty)$. The auxiliary parameter $\epsilon$ plays a crucial role in ensuring the continuity of the subdifferential mapping. Existing  algorithms can be broadly classified into two categories based on how the enlarged subdifferentials are constructed: Goldstein-type methods and bundle-type methods.

\vspace{0.1in}
\noindent
\textbf{Goldstein-type methods}. The seminal work of Goldstein~\cite{goldstein1977optimization} introduced  the concept of the \textsl{Goldstein $\epsilon$-subdifferential}:
\[
    \partial^{\text{G}}_{\epsilon}f(x) \triangleq \conv\Big( \bigcup 
    \left\{\partial f(z) \,\middle|\, z \in \overline{\B}_{\epsilon}(x) \right\} \Big), \quad \epsilon>0,
\]
where $\overline{\B}_{\epsilon}(x) \triangleq \{y \in \R^{n}\mid \|y - x\| \leq \epsilon\}$ and $\conv(S)$ represents the convex hull of a set $S$. Goldstein's subgradient method updates the iterates as
\begin{equation}\label{eq:Goldstein_direc}
    \hspace{0.5in}
    x^{k+1}= x^k- \epsilon_k \, \frac{g^{k}}{\|g^{k}\|}
    \quad\text{with}\quad g^{k} = \argmin_{v \in \partial^{\text{G}}_{\epsilon_k} f(x^k)}\|v\| (\neq 0),
\tag{Goldstein}
\end{equation}
which guarantees descent in the objective: $f(x^{k+1}) \leq f(x^k) - \epsilon_k \|g^{k}\|$. With properly chosen stepsizes $\{\epsilon_k\}$, the  sequence $\{x^k\}$ converges  subsequentially  to stationary points of $f$.

However, computing the direction in \eqref{eq:Goldstein_direc} is often challenging in practice, as it requires access to all subgradients within a neighborhood of  $x^k$.
The gradient sampling algorithm~\cite{burke2002approximating,burke2005robust,burke2020gradient} circumvents this by approximating the Goldstein $\epsilon$-subdifferential using randomly sampled points in $\overline{\B}_{\epsilon}(x)$. 
Recently, several variants of Goldstein-type methods have been proposed in~\cite{zhang2020complexity,davis2022gradient,tian2022finite}, focusing on efficient random approximation of the direction in \eqref{eq:Goldstein_direc}  to find a point $x$ satisfying $\dist\big(0, \partial^{\text{G}}_{\epsilon}f(x)\big) \leq \delta$ with complexity guarantees.
The recent work \cite{davis2024local} proposed a randomized scheme of the Goldstein method that is locally nearly linearly convergent by exploring underlying smooth substructures. Without considering the complexity inherent in the minimal norm oracle for finding Goldstein directions, \cite{kong2024lipschitz} identifies abstract properties that facilitate the near-linear convergence in Goldstein-type methods. Those properties involve a new modulus measuring the growth of a function through the Goldstein $\epsilon$-subdifferential. 

\vspace{0.1in}
\noindent
\textbf{Bundle-type methods}. Bundle methods, originating from the works~\cite{wolfe1975method,lemarechal1975extension},
are based on a different notion of enlarged subdifferentials, particularly suited for convex nonsmooth functions. Specifically, for a convex function $f$, the \textsl{$\epsilon$-subdifferential} at $x$ is defined as
\[
    \partial_{\epsilon}f(x) \triangleq \left\{ v \,\middle|\, f(z) \geq f(x) + v^{\top}(z-x) - \epsilon \text{ for all }z \right\}, \qquad\epsilon > 0.
\]
This definition allows subgradients computed at one point to be effectively reused at another. In particular, if $g \in \partial f(\hx)$ at some $\hx$, then $g$ can serve as an inexact subgradient at another point $x$, satisfying $g \in \partial_{\epsilon} f(x)$ with $\epsilon = f(x) - f(\hx) - g^\top(x - \hx)$. This property, known as the {transportation formula}~\cite[Chapter XI, Section 4.2]{hiriart1996convex}, allows one to iteratively build up a convex polyhedron $S_{k} \subset \partial_{\epsilon_k}f(x^k)$ by aggregating objective values and subgradients from past iterations. In the dual interpretation of bundle methods~\cite[Lemma 3.1 and equation (3.20)]{lemarechal2000nonsmooth}, the so-called serious step with the stepsize $\mu_k > 0$ corresponds to the update
\begin{equation}\label{eq:Bundle_direc}
    \hspace{0.5in}
    x^{k+1} = x^k- {\mu_k} \, \tilde g^{k}
    \quad\text{with}\quad \tilde g^{k} = \argmin_{v \in S_k}\|v\| (\neq 0), 
\tag{Bundle}
\end{equation}
ensuring a decrease in the objective value if $S_k$  sufficiently approximates $\partial_{\epsilon_k}f(x^k)$. Between serious steps, null steps are used to refine the approximation $S_k$ by incorporating more objective values and subgradient information at trial points. Bundle methods have also been extended to certain  classes of nonconvex functions; see, for example,~\cite{Mifflin1982modification,hare2010redistributed,de2019proximal}. 

In terms of the convergence rate, the pioneering work~\cite{robinson1999linear} shows that serious steps converge at a linear rate for convex nonsmooth functions whose subdifferential is metrically regular. This analysis has been recently extended to weakly convex functions~\cite{atenas2023unified}. For convex functions admitting $\mathcal{VU}$-structures~\cite{mifflin2000mathcalvu}, a variant of bundle methods that incorporates second-order information relative to a smooth subspace enjoys superlinear convergence in terms of serious steps~\cite{mifflin2005algorithm}. Accounting for both serious and null steps, \cite{diaz2023optimal} shows that the bundle method achieves linear convergence under the more restrictive sharp growth condition, alongside convergence rate analysis under various other smoothness and growth conditions.

\vspace{0.1in}
Observe that the frameworks in \eqref{eq:Goldstein_direc} and \eqref{eq:Bundle_direc}  differ primarily in how they enlarge the Clarke subdifferential $\partial f(x)$ to stabilize the steepest descent direction in \eqref{eq:Steepest_descent}: The Goldstein $\epsilon$-subdifferential $\partial^{\text{G}}_\epsilon f(x)$ expands the set by incorporating subgradients from a neighborhood of  $x$, whereas the $\epsilon$-subdifferential $\partial_\epsilon f(x)$ is defined based on inexact first-order expansions at $x$. If $f$ is convex and $L$-Lipschitz continuous, one can show that $\partial^{\text{G}}_\epsilon f(x) \subset \partial_{2\epsilon L} f(x)$, though the reverse inclusion  does not generally hold.

\subsection{Contributions}

The aim of this paper is two-fold: to establish an abstract guiding principle for the design of convergent descent methods in nonsmooth optimization, and to construct   descent directions for a broad subclass of nonsmooth functions from the angle of subgradient regularization.  

\vskip 0.1in
\noindent
{\bf Descent-oriented subdifferentials.}
Despite the success of Goldstein-type and Bundle-type methods, little research has been devoted to identifying abstract properties of enlarged subdifferentials, defined jointly over $(x, \epsilon)$, that can be systematically employed to design descent algorithms with convergence guarantees. 
Both \eqref{eq:Goldstein_direc} and \eqref{eq:Bundle_direc} follow a common two-step procedure to construct directions: first (approximately) expand subdifferentials via an auxiliary parameter $\epsilon$, then select  the minimal norm element. This process yields a descent direction while avoiding the discontinuity issue of the steepest descent direction \eqref{eq:Steepest_descent}.

In the first part of our work, we formalize this idea through the concept of \textsl{descent-oriented subdifferentials}, a guiding principle for constructing stable descent directions in nonsmooth optimization. Notably,  directions  in  \eqref{eq:Goldstein_direc} and \eqref{eq:Bundle_direc} can be interpreted as particular realizations within our framework. Building on this concept, we further develop a general descent  method (Algorithm \ref{alg1}) that ensures subsequential convergence to stationary points. 

Beyond offering a principled theoretical framework,  descent-oriented subdifferentials also open the door to develop new algorithms for structured nonsmooth problems, where additional problem structure can be leveraged. 
For example, it is known that to minimize a nonsmooth composite function $f(x) \triangleq h (c(x))$, where $c: \R^{n}\to \R^{m}$ is a $C^{1}$-smooth mapping and $h: \R^{m}\to \R$ is a convex function, the prox-linear method~\cite{fletcher1982model,burke1985descent,burke1995gauss,lewis2016proximal,drusvyatskiy2018error} guarantees  descent via  linearizing only the inner smooth map $c$ at each iteration while keeping the outer convex function $h$ unchanged. This naturally motivates our next contribution.

\vskip 0.1in
\noindent
{\bf Descent directions via subgradient regularization. }
For a broad class of nonsmooth marginal functions, including finite maxima or minima of smooth functions,  we consider a simple yet powerful  technique for constructing descent-oriented subdifferentials via  \textsl{subgradient regularization}.  When Algorithm~\ref{alg1} is equipped with this construction, we refer to the resulting method as \textsl{Subgradient-Regularized Descent} (SRDescent).
To illustrate the idea, consider  $f(x) = \max\{f_1(x), f_2(x)\}$ for some smooth functions $f_1$ and $f_2$. One has
\[
    \partial f(x) = \left\{\by_1 \nabla f_1(x) + \by_2 \nabla f_2(x) \,\middle|\, \by \in \argmax_{y \in \Delta^{2}} \big[\,y_1 f_1(x) + y_2 f_2(x)\,\big] \right\},
\]
where $\Delta^{d}\triangleq \big\{ y \in \R^{d}\mid \sum^{d}_{i=1}y_{i}= 1, y \geq 0 \big\}$. Our subgradient regularization approach constructs a descent-oriented subdifferential as follows:
\[
    \left\{
    \by^{\epsilon}_1 \nabla f_1(x) + \by^{\epsilon}_2 \nabla f_2(x)\,\middle|\,
    \by^{\epsilon} \in \argmax\limits_{y \in \Delta^2} \left[
    \begin{array}{rl}
    & y_1 f_1(x) + y_2 f_2(x) \\
    &- \frac{\epsilon}{2} \underbrace{\|y_1 \nabla f_1(x) + y_2 \nabla f_2(x)\|^2}_{\text{subgradient regularization}}
    \end{array}\right]
    \right\},
\]
where the regularization term penalizes the norm of a perturbed subgradient. When $\epsilon>0$, the above set is  outer semicontinuous in $x$; and when $\epsilon \downarrow 0$, it converges to the minimal norm subgradient at $x$.
SRDescent then proceeds by moving along the negative of the regularized subgradient. Unlike the directions used in \eqref{eq:Goldstein_direc} and \eqref{eq:Bundle_direc} that require information from a neighborhood of the current iterate, this approach generates descent directions using  objective values and subgradients at a single reference point. Figure~\ref{fig:illustrate_combine_grad} visualizes the directions generated by gradient sampling, a bundle-type method, and SRDescent. Figure~\ref{fig:illustrate_performance} previews the empirical performance of these approaches. 

\begin{figure}[ht]
    \centering
    \begin{minipage}[b]{0.33\textwidth}
        \centering
        \includegraphics[width=0.9\textwidth]{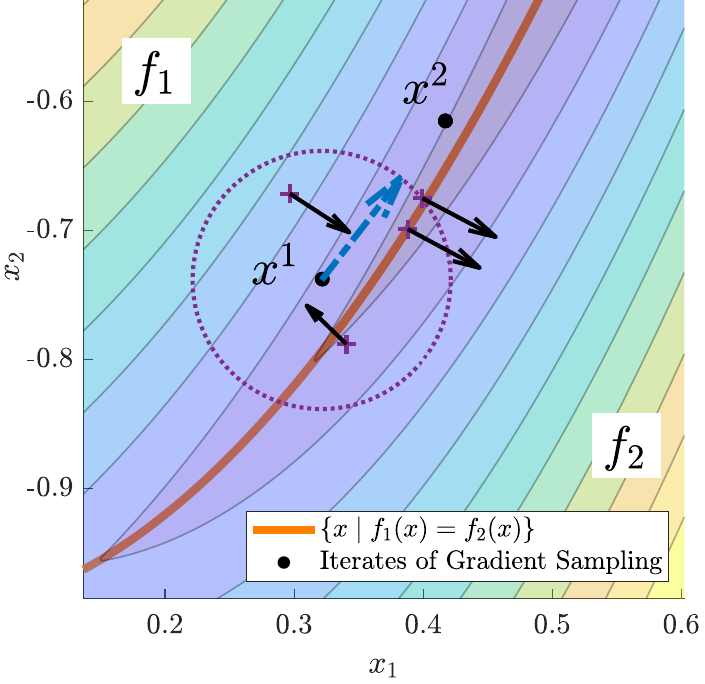}
    \end{minipage}
    \hfill
    \begin{minipage}[b]{0.315\textwidth}
        \centering
        \includegraphics[width=0.9\textwidth]{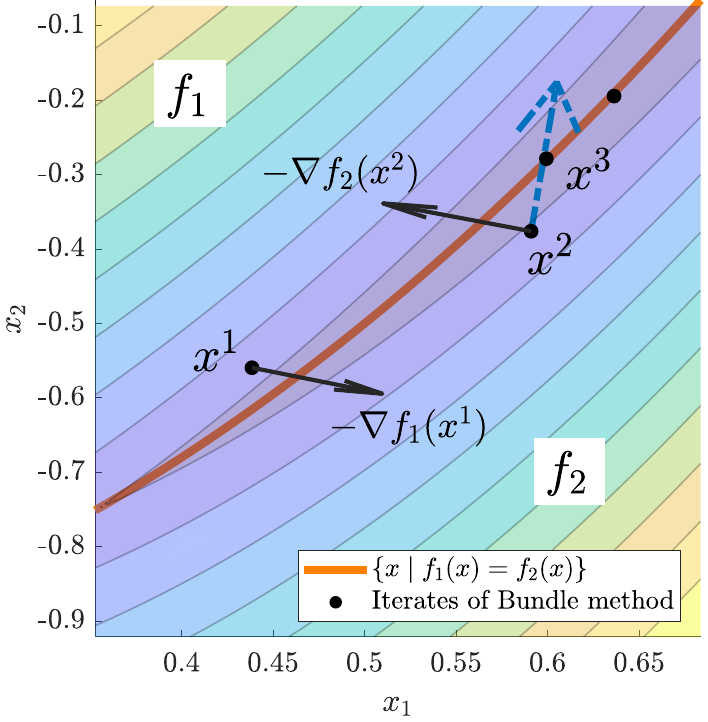}
    \end{minipage}
    \begin{minipage}[b]{0.33\textwidth}
        \centering
        \includegraphics[width=0.9\textwidth]{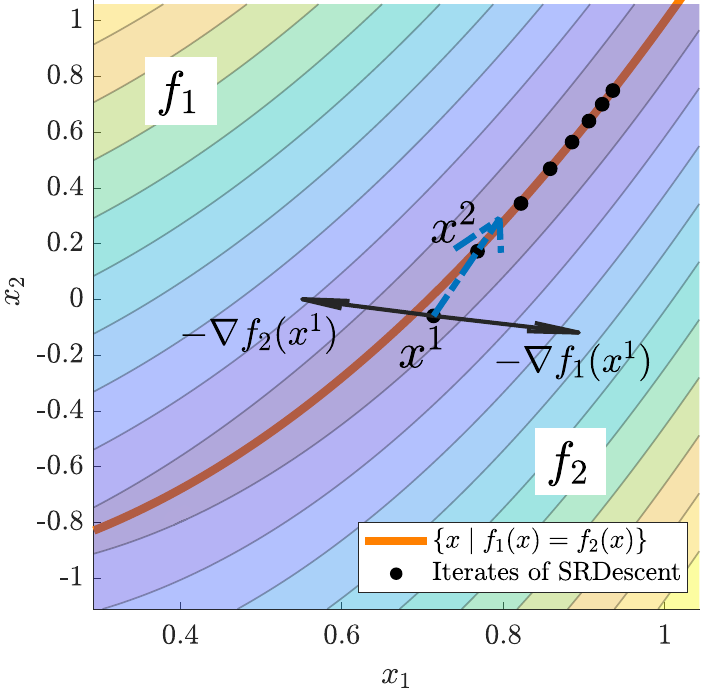}
    \end{minipage}
    \caption{\small Contour plots and descent directions of Nesterov's nonsmooth Chebyshev-Rosenbrock function $f(x) = \frac{1}{4}(x_{1}-1)^{2} + \big|x_{2}-2(x_{1})^{2}+1\big|$, which can be expressed as the maximum of two smooth functions $f_{1}$ and $f_{2}$. In each plot, dashed arrows show the descent direction computed by the respective methods, while different markers indicate the points used to construct these directions. 
    \textbf{Left}: In gradient sampling algorithm, four nearby sample points (crosses) around $x^{1}$ are used to compute a minimal norm convex combination of gradients. 
    \textbf{Middle}: In bundle method, gradients from previous iterates $\nabla f_{1}(x^{1})$ and $\nabla f_{2}(x^{2})$ are aggregated to form a descent direction at $x^{2}$.
    \textbf{Right}: SRDescent directly combines gradients $\nabla f_{1}(x^{1})$ and $\nabla f_{2}(x^{1})$ and their function values at the current iterate $x^{1}$.}
    \label{fig:illustrate_combine_grad}
\end{figure}

\begin{figure}[!ht]
    \centering
    \begin{minipage}[b]{0.32\textwidth}
        \centering
        \includegraphics[width=0.96\textwidth]{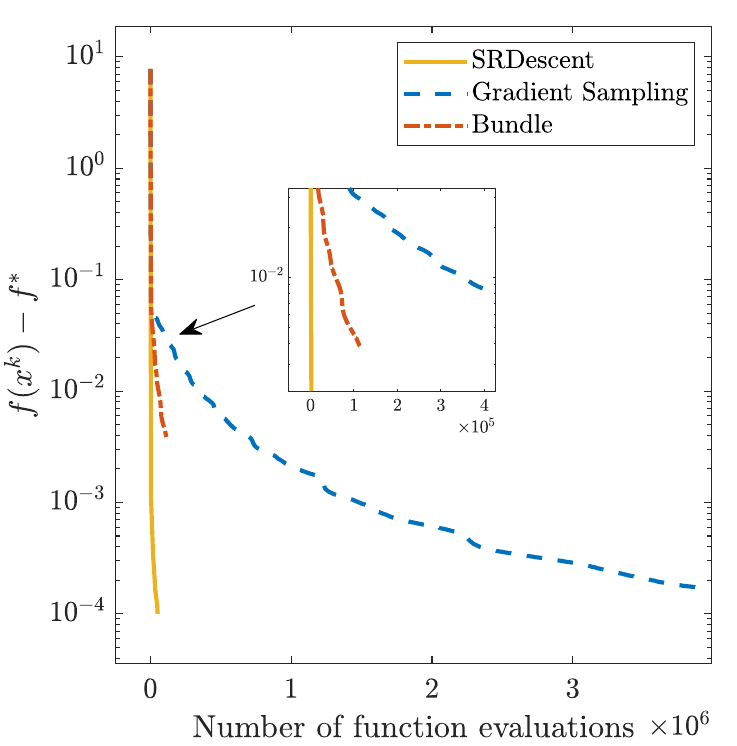}
    \end{minipage}
    \begin{minipage}[b]{0.32\textwidth}
        \centering
        \includegraphics[width=0.96\textwidth]{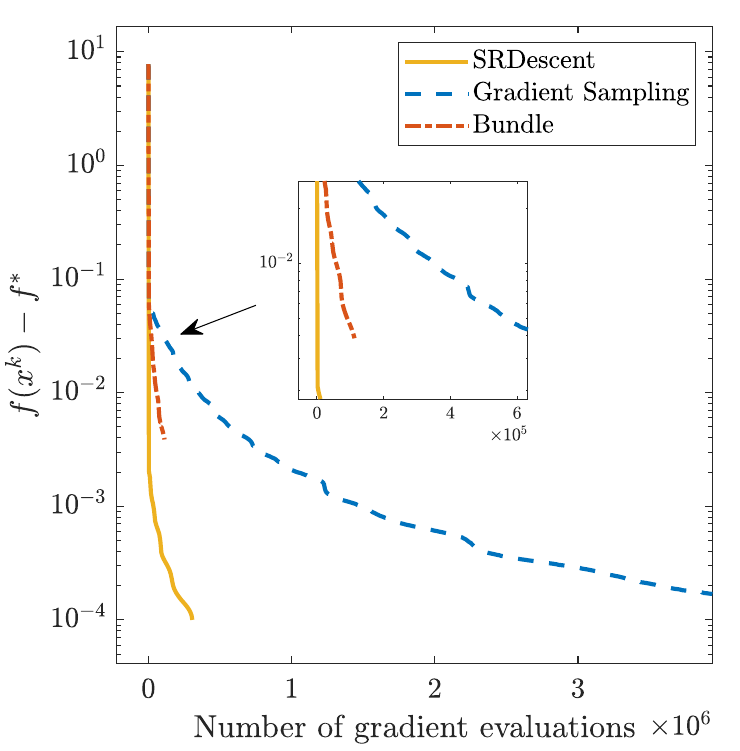}
    \end{minipage}
    \begin{minipage}[b]{0.32\textwidth}
        \centering
        \includegraphics[width=0.97\textwidth]{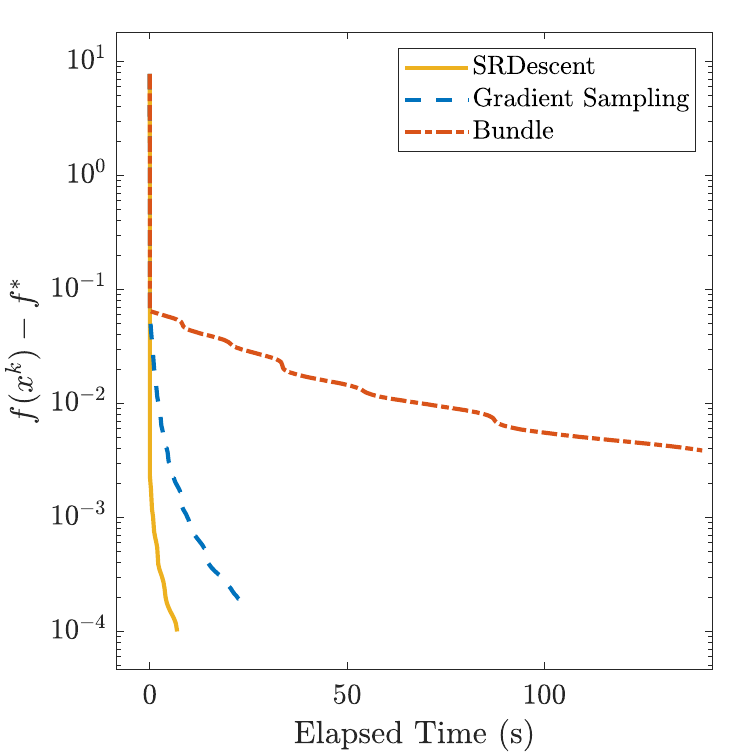}
    \end{minipage}
    \caption{\small Performance of SRDescent, gradient sampling~\cite{burke2005robust}, and a bundle method specially designed for difference-of-convex functions~\cite{de2019proximal} on Nesterov's nonsmooth Chebyshev-Rosenbrock function $f(x) = \frac{1}{4}(x_{1}- 1)^{2}+ \sum^{6}_{i=1} |x_{i+1}-2(x_{i})^2+1|$. The gradient sampling algorithm terminates due to the failure of line search.}
\label{fig:illustrate_performance}
\end{figure}

Interestingly, when  $f$ is a composition of a convex function with a smooth map,  the subgradient regularization reduces to the prox-linear update  (see Proposition \ref{prop:equivalent_prox-linear} in Section \ref{sec5.1:dual_prox-linear}). As a byproduct,  this connection offers a new dual perspective on the prox-linear method, previously understood primarily from a model-based viewpoint~\cite{drusvyatskiy2018error}.

Motivated by this relationship,  we further refine Algorithm \ref{alg1} by introducing a dynamic adjustment of the regularization parameter $\epsilon$. We show that this adaptive variant preserves subsequential convergence to stationary points for locally Lipschitz continuous functions. Moreover, similar to the prox-linear method, under regularity conditions,
the algorithm achieves local linear convergence when applied to the composition of a convex function with a smooth map.

\subsection{Notation and preliminaries}
For any vector $x\in \mathbb{R}^n$, we use $\|x\|$ to denote its Euclidean norm.
The open ball in $\R^{n}$ centered at $\bx \in \R^{n}$ with radius $\delta > 0$ is denoted by $\mathbb{B}_{\delta}(\bx) \triangleq \{x \in \R^{n}\mid \|x - \bx\| < \delta\}$, and its closure is $\overline{\B}_{\delta}(\bx)$. 
For nonempty closed sets $A , B \subset \R^{n}$, the one-sided deviation of $A$ from $B$ is defined as $\D (A, B) \,\triangleq\, \sup_{x \in A}\dist(x, B)$, where $\dist(x, B) \triangleq \inf_{y \in B}\| y - x \|$. The Hausdorff distance is defined as $\Hausd (A, B ) \triangleq \max\{\D(A, B), \,\D(B, A)\}$. We denote $\operatorname{cl}(A)$ and $\conv(A)$ as the closure and convex hull of a set $A$, respectively. The set of nonnegative integers is $\N$. We write $\N_{\infty}^{\sharp}\triangleq \{N \subset \N \mid N \mbox{ infinite}\}$ and $\N_{\infty}\triangleq \{ N \subset \N \mid \N\,\backslash N \text{ finite}\}$. Notation $\{t^{k}\}$ is used to simplify the expression of any sequence $\{ t^{k}\}_{k \in \N}$, where the elements can be points or sets. The convergence of a sequence $\{ t^{k}\}$ and a subsequence $\{t^{k}\}_{k \in N}$ to $t $ are written as $t^{k}\to t $ and $t^{k}\to_{N}t $. The Clarke directional derivative of a locally Lipschitz function $f$ at $x$ in the direction $d$ is $f^\circ(x; d) = \limsup_{y \to x,\, t \downarrow 0} \frac{f(y+td) - f(y)}{t}$, and recall~\cite[Proposition 2.1.2]{clarke1990optimization} that $f^\circ(x; d) = \MaxVal\{d^\top v \mid v \in \partial f(x)\}$.
For a sequence of sets $\{C^{k}\}$, the outer limit and inner limit are defined as
\[
\begin{array}{rl}
    \displaystyle\limsup_{k \rightarrow +\infty} \, C^k \triangleq & \{ u \mid \exists \, N \in \mathbb{N}_\infty^\sharp,\, u^k \to_N u \text{ with } u^k \in C^k \}, \\
    \displaystyle\liminf_{k \rightarrow +\infty} \; C^k \triangleq & \{ u \mid \exists \, N \in \mathbb{N}_\infty,\, u^k \to_N u \text{ with } u^k \in C^k \}.
\end{array}
\]
When the outer and inner limits coincide, the sequence $\{C^{k}\}$ is said to converge, and the set limit is denoted by $\lim_{k \to +\infty}C^{k}$. For a set-valued mapping $S: \R^{n}\rightrightarrows \R^{m}$, the outer and inner limits of $S$ at $\bx$ are
\[
    \limsup_{x \rightarrow \bar x} S(x) \triangleq \bigcup_{x^k \rightarrow \bar x}\limsup_{k \rightarrow +\infty}S(x^k), \qquad \liminf_{x
    \rightarrow \bar x}S(x) \triangleq \bigcap_{x^k \rightarrow \bar x}\liminf_{k \rightarrow +\infty}S(x^k).
\]
The mapping $S$ is \textsl{outer semicontinuous (osc)} at $\bar x \in \R^{n}$ if $\limsup_{x \rightarrow \bar x} S(x) \subset S(\bar x)$, equivalently, if $\limsup_{x \to \bx} S(x) = S(\bx)$. It is \textsl{continuous} at $\bx$ if $S(\bar x) = \limsup_{x \rightarrow \bar x} S(x) = \liminf_{x \rightarrow \bar x} S(x)$. The mapping $S$ is \textsl{locally bounded} at $\bx \in \R^{n}$ if there exists a neighborhood $V$ of $\bx$ such that $\bigcup \{S(x) \mid x \in V\}$ is bounded.

The trace of a square matrix $X$ is denoted by $\Tr(X)$. Let $\mathcal{S}_{m}$ be the space of $m \times m$ symmetric matrices and define the Frobenius scalar product of $X, Y \in \mathcal{S}_{m}$ as $\langle X, Y \rangle = \Tr(X Y)$.

\subsection{Outline of the paper}
The  rest of the paper is organized as follows. Section~\ref{sec2} introduces the abstract concept of descent-oriented subdifferentials and establishes its key properties. Section \ref{sec4:algorithm_vanilla} develops a general framework of descent methods based on the proposed descent-oriented subdifferentials and proves its asymptotic convergence. 
In Section~\ref{sec4:subgrad-regularization}, we present a new construction of descent-oriented subdifferentials for a class of marginal functions using subgradient regularization. 
In Section \ref{sec5:algorithm_adaptive}, we propose an adaptive variant of the previous algorithm, which reduces to the prox-linear method when minimizing the composition of a convex function with a smooth map, and establish its local linear convergence under regularity conditions.
Section \ref{sec6:nuericals} evaluates the performance of the proposed algorithms on several challenging nonsmooth optimization problems. The paper ends with a concluding section.

\section{Descent-Oriented Subdifferentials}
\label{sec2}

\subsection{Definition and existence of descent directions}
\label{sec2.1:Def_descent_oriented}

We begin by introducing the concept of descent-oriented subdifferentials, which not only unifies the ideas behind Goldstein-type and bundle-type methods, but also serves as a guiding principle for designing new descent algorithms with convergence guarantees in nonsmooth optimization.

\begin{definition}[Descent-oriented subdifferentials]
\label{def:G}
Let $f: \R^{n}\to \R$ be a locally Lipschitz continuous function. A set-valued map $G: \R^{n}\times (0, +\infty) \rightrightarrows \R^{n}$ is called a \textsl{descent-oriented subdifferential} for $f$ if, for each fixed  $\epsilon > 0$, the mapping $G(\cdot, \epsilon)$ is closed-valued and locally bounded, and the following two properties hold for all $\bx$:
    \begin{equation}\label{eq:Def_G1}
        \displaystyle \limsup_{\epsilon \downarrow 0,\, x \to \bx}\; G(x, \epsilon) \subset \partial f(\bx),
    \tag{G1}
    \end{equation}
    \begin{equation}\label{eq:Def_G2}
        \displaystyle \lim_{\epsilon \downarrow 0}\left(\limsup_{x \to \bx}G(x, \epsilon) \right) = \argmin_{v \in \partial f(\bx)} \|v\|.
    \tag{G2}
    \end{equation}
\end{definition}

\begin{remark}[Scaling]
\label{rmk:G_scaling}
    Suppose $G$ is a descent-oriented subdifferential for $f$. Then, for any scalar $\alpha$, the mapping $\alpha \, G$ is a descent-oriented subdifferential for $\alpha \, f$. This result follows directly from the fact that $\partial (\alpha f)(x) = \alpha \, \partial f(x)$ (see~\cite{clarke1975generalized}). In particular, $(-G)$ is a descent-oriented subdifferential for $(-f)$.
\end{remark}

Below we discuss the two required conditions of descent-oriented subdifferentials. Property \eqref{eq:Def_G1} ensures that under joint perturbation in $(x,\epsilon)$, the set $G(x, \epsilon)$ does not asymptotically introduce information beyond the Clarke subdifferential. This requirement is similar to the one-sided ``gradient consistency" of the smoothing method~\cite{chen2012smoothing}, in which $G(x, \epsilon)$ corresponds to the gradient of a smooth approximation for $f$ and plays an important role in establishing subsequential convergence to stationary points of smoothing methods. 
On the other hand, property \eqref{eq:Def_G2} restricts the sequential limit of $G(x, \epsilon)$ in $x$ and $\epsilon \downarrow 0$ to be the minimal norm subgradient. In particular, it implies that $\lim_{\epsilon \downarrow 0}G(\bx, \epsilon) = \argmin_{v \in \partial f(\bx)} \|v\|$, which guarantees the existence of descent directions for sufficiently small $\epsilon$, as we will show in Proposition \ref{prop:descent_direc}.
Furthermore, property \eqref{eq:Def_G2} is also closely related to the continuity of $G(x, \epsilon)$ with respect to $x$ for a fixed $\epsilon > 0$. A sufficient condition for \eqref{eq:Def_G2} is that, for all $\bx$, we have:
\begin{equation}\label{eq:Def_G2_sufficient}
    \left[\, \limsup_{x \to \bx} G(x, \epsilon) = G(\bx, \epsilon), \quad\forall\,\epsilon > 0 \,\right] \quad\text{and} \quad   \displaystyle\lim_{\epsilon \downarrow 0} G(\bx, \epsilon) = \argmin_{v \in \partial f(\bx)} \|v\|.
\tag{G2$^\prime$}
\end{equation}
The former limit  requires $G(\cdot, \epsilon)$ to be osc for every fixed $\epsilon > 0$. When $G(\cdot, \epsilon)$ is single-valued, outer semicontinuity together with local boundedness of $G(\cdot, \epsilon)$, as specified in Definition \ref{def:G}, is equivalent to continuity of $G(\cdot, \epsilon)$~\cite[Corollary 5.20]{rockafellar2009variational}.

\begin{remark}
\label{rmk:G_regularization_effect_of_epsilon}
    The minimal norm subgradient  may not be a descent-oriented subdifferential, since  property \eqref{eq:Def_G2} is not satisfied. Consider for example  $f(x) = \max\{-100, 2x_{1}+3x_{2}, -2x_{1}+3x_{2}, 5x_{1}+2x_{2}, -5x_{1} +2x_{2}\}$, which was studied in~\cite[Section 2.2]{hiriart1996convex} to demonstrate why the steepest descent method may fail to minimize nonsmooth functions. At $\bx = (0, 0)$, we have $\argmin_{v \in \partial f(\bx)} \|v\| = \{(0, 2)\}$. Along the path
    $x(t) = (t, 3t)$ with $t > 0$, $\argmin_{v \in \partial f(x(t))} \|v\| = \{(2, 3)\}$. Thus, the minimal norm subgradient violates \eqref{eq:Def_G2} because
    \[
        \limsup_{t \to 0} \, \argmin_{v \in \partial f(x(t))} \|v\| = \{(2, 3)\}
        \neq \{(0, 2)\} = \argmin_{v \in \partial f(\bx)} \|v\|.
    \]
\end{remark}

We now characterize stationary points using descent-oriented subdifferentials.
\begin{proposition}[Characterization of stationary points]
    Let $f: \R^n \to \R$ be a locally Lipschitz continuous function and $G$ be a descent-oriented subdifferential for $f$. Then a point $\bx$ is a stationary point of $f$ if and only if there exist sequences $\{\epsilon_k\} \downarrow 0$, $\{v^k\} \to 0$, and $\{x^k\} \to \bx$ such that $v^k \in G(x^k, \epsilon_k)$.
\end{proposition}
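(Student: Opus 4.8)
The plan is to prove the equivalence by treating the two implications separately; throughout, write $H(\epsilon)\triangleq\limsup_{x\to\bx}G(x,\epsilon)$ for $\epsilon>0$, so that \eqref{eq:Def_G2} reads $\lim_{\epsilon\downarrow 0}H(\epsilon)=\argmin_{v\in\partial f(\bx)}\|v\|$.

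For the ``if'' direction I would argue directly from \eqref{eq:Def_G1}: given sequences $\{\epsilon_k\}\downarrow 0$, $\{v^k\}\to 0$, $\{x^k\}\to\bx$ with $v^k\in G(x^k,\epsilon_k)$, the vector $0$ is by the very definition of the outer limit an element of $\limsup_{\epsilon\downarrow 0,\,x\to\bx}G(x,\epsilon)$, so \eqref{eq:Def_G1} forces $0\in\partial f(\bx)$, i.e.\ $\bx$ is stationary. I expect this step to be routine.

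For the ``only if'' direction I would start from $0\in\partial f(\bx)$, observe that then $0$ is the (unique) minimal-norm element so $\argmin_{v\in\partial f(\bx)}\|v\|=\{0\}$, and feed this into \eqref{eq:Def_G2} to get $\lim_{\epsilon\downarrow 0}H(\epsilon)=\{0\}$; in particular $0$ belongs to the inner limit $\liminf_{\epsilon\downarrow 0}H(\epsilon)$. Fixing any sequence $\epsilon_k\downarrow 0$ (say $\epsilon_k=1/k$) then produces points $u^k\in H(\epsilon_k)$ with $u^k\to 0$ for cofinitely many $k$ (in particular $H(\epsilon_k)$ cannot be empty for infinitely many $k$, or the inner limit would be empty, contradicting $\{0\}$). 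Next I would unfold $u^k\in H(\epsilon_k)=\limsup_{x\to\bx}G(x,\epsilon_k)$ to obtain, for each $k$, a sequence $x^{k,j}\to_j\bx$ and $v^{k,j}\in G(x^{k,j},\epsilon_k)$ with $v^{k,j}\to_j u^k$, and finish by a diagonal selection: choose $j(k)$ with $\|x^{k,j(k)}-\bx\|\le 1/k$ and $\|v^{k,j(k)}-u^k\|\le 1/k$, and set $x^k\triangleq x^{k,j(k)}$, $v^k\triangleq v^{k,j(k)}$. Then $x^k\to\bx$ and $v^k\in G(x^k,\epsilon_k)$ by construction, while the triangle inequality together with $u^k\to 0$ yields $v^k\to 0$, which is exactly what is required.

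The hard part will be the bookkeeping in the ``only if'' direction: correctly reading the two nested limits in \eqref{eq:Def_G2} (the outer limit over $x$ at fixed $\epsilon$, wrapped inside the limit as $\epsilon\downarrow 0$), pulling a convergent selection out of the inner limit along the prescribed sequence $\epsilon_k\downarrow 0$, and then diagonalizing to collapse the double-indexed family $\{x^{k,j},v^{k,j}\}$ into a single sequence. I do not expect to need the closed-valuedness or local boundedness clauses of Definition~\ref{def:G} for this proposition — only \eqref{eq:Def_G1}, \eqref{eq:Def_G2}, and the definitions of the outer and inner set limits.
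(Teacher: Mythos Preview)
Your proposal is correct and follows essentially the same approach as the paper. The paper's proof is extremely terse---after invoking \eqref{eq:Def_G2} to obtain $\lim_{\epsilon\downarrow 0}H(\epsilon)=\{0\}$, it simply asserts ``Hence, we can find $\{\epsilon_k\}\downarrow 0$ and $\{x^k\}\to\bx$ such that $0\in\limsup_{k\to+\infty}G(x^k,\epsilon_k)$''---whereas you carefully unpack this step via the inner-limit selection $u^k\in H(\epsilon_k)$ followed by a diagonal extraction, which is exactly the bookkeeping the paper suppresses.
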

\begin{proof}
    The ``if" direction follows immediately from property \eqref{eq:Def_G1}. For the ``only if" direction, since $\bx$ is a stationary point, by property \eqref{eq:Def_G2}, we have
    \[
        \displaystyle \lim_{\epsilon \downarrow 0}\left(\limsup_{x \to \bx}G(x, \epsilon) \right) = \argmin_{v \in \partial f(\bx)} \|v\| = \{0\}.
    \]
    Hence, we can find $\{\epsilon_k\} \downarrow 0$ and $\{x^k\} \to \bx$ such that $0 \in \limsup_{k \to +\infty} G(x^k, \epsilon_k)$, completing the proof.
\end{proof}

Finally, we establish the existence of descent directions at non-stationary points using \eqref{eq:Def_G2}. This proposition will be central to the convergence analysis of our proposed algorithms.

\begin{proposition}[Existence of descent directions]
\label{prop:descent_direc}
    Let $f: \R^n \to \R$ be a locally Lipschitz continuous function and $G$ be a descent-oriented subdifferential for $f$. For any point $\bx$ satisfying $0 \notin \partial f(\bx)$ and any constant $\alpha \in (0,1)$, there exist positive scalars $\epsilon_{\bx}$ and $\eta_{\bx}$ such that for all $\epsilon \in (0, \epsilon_{\bx})$ and $\eta \in (0, \eta_{\bx})$, $0 \notin \limsup_{x \to \bx} G(x, \epsilon)$ and the following descent inequality holds:
    \begin{equation}\label{eq:descent_ineq}
        f(\bx - \eta g) \leq f(\bx) - \alpha \, \eta \|g\|^{2},
        \qquad\forall\, g \in \limsup_{x \to \bx}G(x, \epsilon).
    \end{equation}
\end{proposition}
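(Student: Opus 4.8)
The plan is to exploit property \eqref{eq:Def_G2} to control the set $\limsup_{x \to \bx} G(x, \epsilon)$ as $\epsilon \downarrow 0$, and then to reduce the descent inequality to a standard first-order argument using the Clarke directional derivative. Fix $\bx$ with $0 \notin \partial f(\bx)$ and $\alpha \in (0,1)$. Let $g_{\bx} \triangleq -\argmin_{v \in \partial f(\bx)} \|v\|$ (a singleton by strict convexity of the norm on the convex compact set $\partial f(\bx)$), so $\|g_{\bx}\| > 0$. The first step is to set up the required uniform bounds on $G(x,\epsilon)$ near $\bx$. By \eqref{eq:Def_G2}, $\lim_{\epsilon \downarrow 0}\bigl(\limsup_{x \to \bx} G(x,\epsilon)\bigr) = \{-g_{\bx}\}$; since the inner sets are nested-ish in $\epsilon$ in the sense that their outer limit as $\epsilon \downarrow 0$ is this singleton, for any tolerance $\rho > 0$ there is $\epsilon_{\bx} > 0$ such that for all $\epsilon \in (0, \epsilon_{\bx})$ we have $\limsup_{x \to \bx} G(x,\epsilon) \subset \overline{\B}_{\rho}(-g_{\bx})$. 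Taking $\rho$ small enough (e.g. $\rho < \tfrac12\|g_{\bx}\|$) immediately gives $0 \notin \limsup_{x \to \bx} G(x,\epsilon)$, which is the first assertion.

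The second step is the key estimate on the Clarke directional derivative. For any $g \in \limsup_{x \to \bx} G(x,\epsilon)$ with $\epsilon \in (0,\epsilon_{\bx})$, I want to bound $f^{\circ}(\bx; -g)$ from above. By \eqref{eq:Def_G1}, the full joint outer limit $\limsup_{\epsilon \downarrow 0,\, x \to \bx} G(x,\epsilon) \subset \partial f(\bx)$; combined with the first step this says that for $\epsilon$ small, every $g \in \limsup_{x \to \bx} G(x,\epsilon)$ lies close to $\partial f(\bx)$. More precisely, since $-g$ is within $\rho$ of $g_{\bx} \in -\partial f(\bx)$, and using $f^{\circ}(\bx; d) = \max\{d^{\top} v \mid v \in \partial f(\bx)\}$, I estimate $f^{\circ}(\bx; -g) \le (-g)^{\top} \hat v$ for the maximizing $\hat v$; writing $-g = g_{\bx} + (\text{error of size} \le \rho)$ and noting $g_{\bx} = -v_{\min}$ where $v_{\min}$ is the minimal norm element, one has $g_{\bx}^{\top} v \le -\|v_{\min}\|^2 = -\|g_{\bx}\|^2$ for all $v \in \partial f(\bx)$ by the variational characterization of the projection of $0$ onto the convex set $\partial f(\bx)$. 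Hence $f^{\circ}(\bx; -g) \le -\|g_{\bx}\|^2 + \rho \, L$, where $L$ is a local Lipschitz constant of $f$ near $\bx$ (which bounds $\|v\|$ for $v \in \partial f(\bx)$). Similarly $\|g\| \le \|g_{\bx}\| + \rho$, so $\|g\|^2 \le \|g_{\bx}\|^2 + \rho(2\|g_{\bx}\| + \rho)$. Choosing $\rho$ sufficiently small (depending on $\alpha$, $\|g_{\bx}\|$, and $L$) makes $f^{\circ}(\bx; -g) < -\alpha \|g\|^2$ simultaneously for all admissible $g$; note that the set $\{g_{\bx}\} \cup \bigl(\bigcup_{0 < \epsilon < \epsilon_{\bx}} \limsup_{x\to\bx} G(x,\epsilon)\bigr)$ is contained in $\overline{\B}_{\rho}(-g_{\bx})$, a compact set uniformly bounded away from $0$, so the constants are uniform.

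The final step converts the directional-derivative bound into the finite-step inequality \eqref{eq:descent_ineq}. By definition of $f^{\circ}$, for each fixed $g$ as above, $\limsup_{y \to \bx,\, t \downarrow 0} \frac{f(y + t(-g)) - f(y)}{t} = f^{\circ}(\bx; -g) < -\alpha \|g\|^2$, so in particular $\limsup_{t \downarrow 0} \frac{f(\bx - tg) - f(\bx)}{t} \le f^{\circ}(\bx; -g) < -\alpha\|g\|^2$, giving a $g$-dependent threshold $\eta_g$ with $f(\bx - \eta g) \le f(\bx) - \alpha \eta \|g\|^2$ for $\eta \in (0, \eta_g)$. The obstacle — and the point that needs care — is obtaining a \emph{single} $\eta_{\bx}$ valid for all $g \in \limsup_{x \to \bx} G(x,\epsilon)$ at once, i.e. uniformity over $g$. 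I would handle this by a compactness/contradiction argument: if no uniform $\eta_{\bx}$ existed, one could extract $g^j$ in the (compact) set $\overline{\B}_{\rho}(-g_{\bx})$ and $\eta_j \downarrow 0$ with $f(\bx - \eta_j g^j) > f(\bx) - \alpha \eta_j \|g^j\|^2$; passing to a subsequence $g^j \to g^{\star}$ (still in the admissible set, which is closed since each $\limsup_{x\to\bx}G(x,\epsilon)$ is closed and they increase as $\epsilon$ grows) and using local Lipschitz continuity of $f$ to replace $g^j$ by $g^{\star}$ up to an $O(\eta_j \|g^j - g^{\star}\|)$ error, one derives $\frac{f(\bx - \eta_j g^{\star}) - f(\bx)}{\eta_j} \ge -\alpha \|g^{\star}\|^2 - o(1)$, contradicting $f^{\circ}(\bx; -g^{\star}) < -\alpha\|g^{\star}\|^2$ once the strict inequality is taken with a margin (which is why $\rho$ was chosen with strict slack above). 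Shrinking $\epsilon_{\bx}$ and $\eta_{\bx}$ if necessary, both conclusions hold for all $\epsilon \in (0,\epsilon_{\bx})$ and $\eta \in (0,\eta_{\bx})$, completing the proof.
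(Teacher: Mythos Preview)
Your argument is correct and rests on the same two ingredients as the paper's proof: property \eqref{eq:Def_G2} forces every admissible $g$ to be near the minimal-norm subgradient $\bar d=-g_{\bx}$, and the projection characterization $(\bar d-u)^\top\bar d\le 0$ for $u\in\partial f(\bx)$ gives $f^\circ(\bx;-\bar d)=-\|\bar d\|^2$, which beats $-\alpha\|\bar d\|^2$ by a strict margin. The paper packages this more economically as a \emph{single} contradiction: assume sequences $\epsilon_k,\eta_k\downarrow 0$ and $g^k\in\limsup_{x\to\bx}G(x,\epsilon_k)$ violating \eqref{eq:descent_ineq}, use \eqref{eq:Def_G2} to get $g^k\to\bar d$, and then read off $f^\circ(\bx;-\bar d)\ge -\alpha\|\bar d\|^2$ directly from the failed inequalities via the chain
\[
f^\circ(\bx;-\bar d)\ \ge\ \limsup_{d\to\bar d,\;t\downarrow 0}\frac{f(\bx-td)-f(\bx)}{t}\ \ge\ \limsup_{k\to\infty}\frac{f(\bx-\eta_k g^k)-f(\bx)}{\eta_k}\ \ge\ -\alpha\|\bar d\|^2,
\]
avoiding your separate ``first bound $f^\circ(\bx;-g)$, then extract a uniform $\eta$'' split. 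One small correction: your parenthetical that the sets $\limsup_{x\to\bx}G(x,\epsilon)$ ``increase as $\epsilon$ grows'' is not asserted anywhere in the definition and need not hold; fortunately you do not actually use it, since your limit point $g^\star$ lies in the closed ball $\overline{\B}_\rho(-g_{\bx})$ and your directional-derivative estimate was derived for every $g$ in that ball.
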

\begin{remark}
    Noting that $G(\bx, \epsilon) \subset \limsup_{x \to \bx} G(x, \epsilon)$, the descent inequality implies that for sufficiently small $\epsilon > 0$, any element of $-G(\bx, \epsilon)$ is a descent direction.
\end{remark}

\begin{proof}
    The first statement $0 \notin \limsup_{x \to \bx} G(x, \epsilon)$ for all sufficiently small $\epsilon > 0$ is a direct consequence of property \eqref{eq:Def_G2} and our assumption $0 \notin \partial f(\bx)$. We only need to show \eqref{eq:descent_ineq}. Suppose for contradiction that $0 \notin \partial f(\bx)$ but there exist  $\{\epsilon_{k}\} \downarrow 0$, $\{\eta_{k}\} \downarrow 0$ and $g^k\in \limsup_{x \to \bx}G(x, \epsilon_{k})$ such that
    \begin{equation}\label{eq:violate_descent}
       f(\bx - \eta_{k} g^k) > f(\bx) - \alpha \, \eta_{k} \|g^k\|^{2},
       \qquad\forall\, k \in \N.
    \end{equation}
    Let $\bar d \triangleq \argmin_{v \in \partial f(\bx)} \|v\| \neq 0$. By property \eqref{eq:Def_G2}, it holds that $g^k\to \bar d$. Next, consider the Clarke directional derivative of $f$ at $\bx$ in the direction $-\bar d$:
\begin{equation}
\label{eq:lower_bound_Clarke_derivative}
\begin{array}{rl}
    f^\circ \big(\bx; -\bar d \,\big) 
    &= \displaystyle\limsup_{x \to \bx,\; t \downarrow 0} \frac{f\big(x - t \bar d \,\big) - f(x)}{t} 
    \geq \displaystyle\limsup_{d \to \bar d,\; t \downarrow 0} \frac{f\big(\bx - t d\big) - f(\bx)}{t}\\[0.15in]
    &\qquad\qquad\qquad\qquad\quad \geq \displaystyle\limsup_{k \to +\infty}\frac{f(\bx - \eta_{k} g^k) - f(\bx)}{\eta_{k}}
    \;\overset{\eqref{eq:violate_descent}}{\geq} - \alpha \left\|\bar d \,\right\|^{2},
\end{array}
\end{equation}
where the first inequality follows from the local Lipschitz continuity of $f$. To proceed, we use the optimality condition of $\bar d = \argmin_{v \in \partial f(\bx)} \|v\|^2$ to obtain $(\bar d - u)^{\top}\bar d \leq 0$ for all $u \in \partial f(\bx)$. Thus, $f^{\circ}(\bx; -\bar d\,) = \max_{u \in \partial f(\bx)} (-u^\top \bar d\,)$ attains its maximum at $u = \bar d$. Hence, by \eqref{eq:lower_bound_Clarke_derivative}, $-\|\bar d\|^{2} = f^{\circ}(\bx; -\bar d \,) \geq - \alpha \|\bar d \|^{2}$, which implies $\bar d = 0$ since $\alpha \in (0,1)$. This contradicts the assumption $0 \notin \partial f(\bx)$.
\end{proof}

\subsection{Construction of descent-oriented subdifferentials}
\label{sec2.2:Enlarged_subdiff}
In the first part of this subsection, we illustrate how descent-oriented subdifferentials can be constructed from  the minimal norm element of either Goldstein $\epsilon$-subdifferential or the $\epsilon$-subdifferential of convex functions. This perspective enables a unified interpretation of the iteration schemes in \eqref{eq:Goldstein_direc} and \eqref{eq:Bundle_direc} through the lens of descent-oriented subdifferentials.

\begin{proposition}
    Let $f:\mathbb{R}^n \to \mathbb{R}$ be locally Lipschitz continuous. Then the single-valued map $G: \R^{n}\times (0, +\infty) \rightarrow \R^{n}$ defined by $G(x, \epsilon) = \argmin_{v \in \partial^\text{G}_\epsilon f(x)} \|v\|$ is a descent-oriented subdifferential for $f$.
\end{proposition}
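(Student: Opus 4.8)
The plan is to verify each of the defining properties of a descent-oriented subdifferential (Definition~\ref{def:G}) for the map $G(x,\epsilon) = \argmin_{v \in \partial^{\text{G}}_\epsilon f(x)} \|v\|$. First I would record the standard facts about the Goldstein $\epsilon$-subdifferential: for each fixed $\epsilon > 0$ and each $x$, the set $\partial^{\text{G}}_\epsilon f(x)$ is nonempty, convex, and compact (it is the convex hull of the union of the Clarke subdifferentials over the compact ball $\overline{\B}_\epsilon(x)$, and $\partial f$ is osc with locally bounded values, so this union is bounded; its closed convex hull in finite dimensions is compact). Consequently the minimal-norm element exists and is unique by strict convexity of $\|\cdot\|^2$ on a compact convex set, so $G(\cdot,\epsilon)$ is genuinely single-valued. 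Local boundedness of $G(\cdot,\epsilon)$ follows because $\|G(x,\epsilon)\| \le \|v\|$ for any $v \in \partial f(x)$, and $\partial f$ is locally bounded. Closed-valuedness is automatic for a single-valued map.

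Next I would establish \eqref{eq:Def_G1}. The key observation is a monotonicity/nesting estimate: if $x$ is close to $\bx$ and $\epsilon$ is small, then $\partial^{\text{G}}_\epsilon f(x) \subset \partial^{\text{G}}_{\epsilon'} f(\bx)$ for a slightly larger $\epsilon' = \epsilon + \|x - \bx\|$, since $\overline{\B}_\epsilon(x) \subset \overline{\B}_{\epsilon'}(\bx)$. Therefore, if $v^k \in G(x^k,\epsilon_k)$ with $x^k \to \bx$, $\epsilon_k \downarrow 0$, and $v^k \to v$, then $v^k \in \partial^{\text{G}}_{\epsilon_k + \|x^k - \bx\|} f(\bx)$, and letting $k \to \infty$ with the outer semicontinuity of $\epsilon \mapsto \partial^{\text{G}}_\epsilon f(\bx)$ as $\epsilon \downarrow 0$ — which reduces to $\bigcap_{\epsilon > 0}\partial^{\text{G}}_\epsilon f(\bx) = \partial f(\bx)$, a consequence of osc of $\partial f$ and Carathéodory together with closedness of $\partial f(\bx)$ — gives $v \in \partial f(\bx)$. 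I would check the $\bigcap_{\epsilon>0}\partial^{\text{G}}_\epsilon f(\bx) = \partial f(\bx)$ identity carefully: ``$\supset$'' is trivial; for ``$\subset$'', a point in every $\partial^{\text{G}}_\epsilon f(\bx)$ is for each $\epsilon$ a convex combination of at most $n+1$ subgradients at points within $\epsilon$ of $\bx$; passing to a subsequence as $\epsilon \downarrow 0$ and using osc of $\partial f$ and convexity of $\partial f(\bx)$ yields membership in $\partial f(\bx)$.

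For \eqref{eq:Def_G2}, I would use the sufficient condition \eqref{eq:Def_G2_sufficient}. The outer semicontinuity of $G(\cdot,\epsilon)$ for fixed $\epsilon > 0$ follows from: (i) the osc of $x \mapsto \partial^{\text{G}}_\epsilon f(x)$, which in turn follows from the osc and local boundedness of $\partial f$ plus the fact that convex hull commutes well with set limits of uniformly bounded sets in finite dimension (or, more robustly, from $\partial^{\text{G}}_\epsilon f(x') \subset \partial^{\text{G}}_{\epsilon + \|x'-x\|}f(x)$ combined with inner approximation); and (ii) a standard continuity-of-the-projection argument: if $x^k \to \bx$, $v^k = G(x^k,\epsilon) \to v$, then $v \in \partial^{\text{G}}_\epsilon f(\bx)$ and $\|v\| = \lim\|v^k\| \le \liminf \dist(0, \partial^{\text{G}}_\epsilon f(x^k))$; because the sets converge (osc plus a matching inner-limit inclusion, again via the nesting estimate) one gets $\|v\| \le \|G(\bx,\epsilon)\|$, forcing $v = G(\bx,\epsilon)$ by uniqueness. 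For the second half of \eqref{eq:Def_G2_sufficient}, I would show $\lim_{\epsilon \downarrow 0}\partial^{\text{G}}_\epsilon f(\bx) = \partial f(\bx)$ in the sense of set convergence (the outer limit is $\bigcap_{\epsilon>0}\partial^{\text{G}}_\epsilon f(\bx) = \partial f(\bx)$ as above, and the inner limit contains $\partial f(\bx)$ trivially since $\partial f(\bx) \subset \partial^{\text{G}}_\epsilon f(\bx)$ for all $\epsilon$), and then invoke continuity of the minimal-norm projection under Painlevé–Kuratowski convergence of uniformly bounded closed convex sets to conclude $G(\bx,\epsilon) \to \argmin_{v \in \partial f(\bx)}\|v\|$ as $\epsilon \downarrow 0$.

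The main obstacle is the outer semicontinuity of $x \mapsto \partial^{\text{G}}_\epsilon f(x)$ for fixed $\epsilon$ — one must be careful because the closure in the definition of the convex hull and the non-openness of the closed ball $\overline{\B}_\epsilon(x)$ interact delicately; the cleanest route is the sandwich $\partial^{\text{G}}_\epsilon f(x') \subset \partial^{\text{G}}_{\epsilon+\|x'-x\|}f(x)$ for the outer bound together with $\partial^{\text{G}}_{\epsilon-\|x'-x\|}f(x) \subset \partial^{\text{G}}_\epsilon f(x')$ for an inner bound, reducing everything to continuity of the \emph{scalar} map $\epsilon \mapsto \partial^{\text{G}}_\epsilon f(\bx)$, which is monotone and hence manageable. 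A minor secondary subtlety is ensuring $G(x,\epsilon) \ne 0$ is not needed here (Definition~\ref{def:G} does not require it), so no nonvanishing argument enters.
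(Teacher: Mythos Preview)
Your treatment of \eqref{eq:Def_G1} is fine and essentially matches the paper's (the paper applies Carath\'eodory directly to each $G(x^k,\epsilon_k)$ rather than first nesting into $\partial^{\text{G}}_{\epsilon_k+\|x^k-\bx\|}f(\bx)$, but the content is the same).

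The gap is in \eqref{eq:Def_G2}. You plan to verify the sufficient condition \eqref{eq:Def_G2_sufficient}, in particular that $G(\cdot,\epsilon)$ is osc for each fixed $\epsilon>0$. This is \emph{false} in general. Take $f(x)=|x|$ on $\R$ and $\bx=\epsilon>0$. For $x^k=\bx+1/k$ the ball $\overline{\B}_\epsilon(x^k)=[1/k,\,2\epsilon+1/k]$ misses the origin, so $\partial^{\text{G}}_\epsilon f(x^k)=\{1\}$ and $G(x^k,\epsilon)=1$; but $\overline{\B}_\epsilon(\bx)=[0,2\epsilon]$ contains $0$, so $\partial^{\text{G}}_\epsilon f(\bx)=[-1,1]$ and $G(\bx,\epsilon)=0$. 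Thus $G(x^k,\epsilon)\to 1\neq 0=G(\bx,\epsilon)$, and the first half of \eqref{eq:Def_G2_sufficient} fails at this $\bx$. The breakdown is exactly where you flagged the ``main obstacle'': your sandwich gives the inner bound $\partial^{\text{G}}_{\epsilon-\|x^k-\bx\|}f(\bx)\subset\partial^{\text{G}}_\epsilon f(x^k)$, but turning that into set convergence requires $\epsilon\mapsto\partial^{\text{G}}_\epsilon f(\bx)$ to be \emph{left}-continuous, and in the example $\bigcup_{\epsilon'<\epsilon}\partial^{\text{G}}_{\epsilon'}f(\bx)=\{1\}\subsetneq[-1,1]$. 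Monotonicity gives only right-continuity, which is not the side you need.

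The paper bypasses \eqref{eq:Def_G2_sufficient} and proves \eqref{eq:Def_G2} directly. Given $\epsilon_k\downarrow 0$ and $v^k\in\limsup_{x\to\bx}G(x,\epsilon_k)$, it selects approximants $x^{k,i_k}$ with $\|x^{k,i_k}-\bx\|\le\epsilon_k$ and $\|G(x^{k,i_k},\epsilon_k)-v^k\|\le 1/k$. Coupling the spatial perturbation to $\epsilon_k$ in this way yields a \emph{two-sided} sandwich
\[
\partial f(\bx)\;\subset\;\partial^{\text{G}}_{\epsilon_k}f(x^{k,i_k})\;\subset\;\partial^{\text{G}}_{2\epsilon_k}f(\bx),
\]
whose outer term converges to $\partial f(\bx)$ by monotone set convergence and whose inner term is fixed; hence the middle sets converge to $\partial f(\bx)$, and continuity of the minimal-norm projection under Painlev\'e--Kuratowski convergence of compact convex sets gives $G(x^{k,i_k},\epsilon_k)\to\argmin_{v\in\partial f(\bx)}\|v\|$, so $v^k$ converges there as well. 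The diagonal coupling is precisely what sidesteps the failure of left-continuity at any fixed $\epsilon$.
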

\begin{proof}
    For any fixed $\epsilon > 0$, the set $\partial^\text{G}_{\epsilon}f(x)$ is non-empty,  closed and convex for all $x$.  Thus, $G$ is a single-valued map on $\R^{n}\times (0, +\infty)$, and $G(\cdot, \epsilon)$ is closed-valued.
    Since $f$ is locally Lipschitz continuous, the set-valued map $\partial^\text{G}_{\epsilon}f(\cdot)$ is locally bounded for any $\epsilon > 0$. Therefore, its projection map, $G(\cdot, \epsilon)$ is also locally bounded.

    We then verify property \eqref{eq:Def_G1}. Take sequences $\{x^k\} \to \bx$ and $\{\epsilon_k\} \downarrow 0$ such that $G(x^k, \epsilon_k) \to \bar u$. We will show that $\bar u \in \partial f(\bx)$. By noticing $G(x^k, \epsilon_k) \in \partial^\text{G}_{\epsilon_k} f(x^k)$ and using Carath\'eodory's Theorem (see, e.g. \cite[Theorem 17.1]{rockafellar1970convex}), there exist $\lambda_{k,j} \geq 0$ and $y^{k,j} \in \overline{\B}_{\epsilon_k}(x^k)$ for $j=1, \cdots, n+1$ satisfying
    \[
        G(x^k, \epsilon_k) = \sum^{n+1}_{j=1} \lambda_{k,j} \, g^{k,j}
        \quad\text{with}\quad
        g^{k,j} \in \partial f(y^{k,j})
        \;\;\text{and}\;\;
        \sum^{n+1}_{j=1} \lambda_{k,j} = 1.
    \]
    For each $j$, observe that $\{\lambda_{k,j}\}_{k \in \N} \subset [0,1]$, and $\{g^{k,j}\}_{k \in \N}$ is a bounded sequence due to the local Lipschitz continuity of $f$. By passing to a subsequence if necessary, we may assume that $\lim_{k \to +\infty} \lambda_{k,j} = \bar\lambda_j$ and $\lim_{k \to +\infty} g^{k,j} = \bar g^j$ for all $j$. We also have $\bar g^j \in \partial f(\bx)$ by the outer semicontinuity of $\partial f$. Thus, the limit point $\bar u$ of the sequence $\{G(x^k, \epsilon_k)\}$ admits the representation
    \[
        \bar u = \sum^{n+1}_{j=1} \bar\lambda_{j} \, \bar g^{j}
        \,\in\, \conv\{\partial f(\bx)\} 
        = \partial f(\bx),\quad \mbox{where $ \displaystyle\sum^{n+1}_{j=1} \bar\lambda_j = 1$ and $\bar\lambda_j \geq 0$}.
    \]
    This establishes \eqref{eq:Def_G1}.
    
    Next, consider any sequence $\{\epsilon_{k}\} \downarrow 0$ and let $v^k\in \limsup_{x \to \bx}G(x, \epsilon_{k})$ for each $k \in \N$. To prove \eqref{eq:Def_G2}, we need to show that $v^k\to \argmin_{v \in \partial f(\bx)} \|v\|$. By definition, there exist sequences $\{x^{k,i}\}_{i \in \N}\to \bx$ and $\{v^{k,i}\}_{i \in \N} \to v^k$ with $v^{k,i} = \argmin_{v \in \partial^\text{G}_{\epsilon_k} f(x^{k,i})} \|v\|$ for all $i \in \N$. 
    For each $k$, select a sufficiently large index $i_{k}$ such that $\|x^{k,i_k}- \bx\| \leq \epsilon_{k}$ and $\|v^{k,i_k}- v^k\| \leq 1/k$. Thus,
    \begin{equation}\label{eq:Goldstein_inclusion}
        \partial f(\bx) \subset \partial^\text{G}_{\epsilon_k} f(x^{k,i_k}) \subset \partial^\text{G}_{2\epsilon_k}f(\bx),
        \qquad \forall\, k \in \N.
    \end{equation}
    Since the sequence of sets $\big\{\partial^\text{G}_{2\epsilon_k}f(\bx)\big\}$ is monotone, it follows from~\cite[Exercise 4.3]{rockafellar2009variational} that
    \begin{equation}\label{eq:Goldstein_convergence}
        \lim_{k \to +\infty}\partial^\text{G}_{2\epsilon_k} f(\bx)
        = \bigcap_{k \in \N}\operatorname{cl}\big(\partial^\text{G}_{2\epsilon_k} f(\bx)\big)
        = \bigcap_{k \in \N}\partial^\text{G}_{2\epsilon_k}f(\bx) = \partial f(\bx).
    \end{equation}
    Combining \eqref{eq:Goldstein_inclusion} and \eqref{eq:Goldstein_convergence}, we obtain $\partial^\text{G}_{\epsilon_k}f(x^{k,i_k}) \to \partial f (\bx)$. By the continuity of the projection onto closed convex sets~\cite[Proposition 4.9]{rockafellar2009variational}, we conclude that
    \[
        \lim_{k \to +\infty}v^{k,i_k}
        = \lim_{k \to +\infty} \,\argmin_{v \in \partial^\text{G}_{\epsilon_k} f(x^{k,i_k})} \|v\| 
        \;=\, \argmin_{v \in \partial f(\bx)} \|v\|.
    \]
    Since $\|v^{k,i_k}- v^k\| \leq 1/k$, we have $v^k \to \argmin_{v \in \partial f(\bx)} \|v\|$. This proves \eqref{eq:Def_G2}.
\end{proof}

We next turn to the $\epsilon$-subdifferential for convex functions. Asplund and Rockafellar~\cite{asplund1969gradients} proved that the set-valued map $(x, \epsilon) \mapsto \partial_{\epsilon} f(x)$ is jointly continuous on $\R^n \times (0,+\infty)$ in terms of the Hausdorff distance. Thus, fixing $\epsilon > 0$, the mapping $x \mapsto \partial_{\epsilon} f(x)$ is continuous. We will use this fact below.

\begin{proposition}
    Let $f: \R^{n}\to \R$ be a convex function. Then the single-valued map $G: \R^{n}\times (0, +\infty) \rightarrow \R^{n}$ defined by
   $G(x, \epsilon) = \argmin_{v \in \partial_\epsilon f(x)} \|v\|$
    is a descent-oriented subdifferential for $f$.
\end{proposition}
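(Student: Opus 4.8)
The plan is to verify the two defining properties of a descent-oriented subdifferential, using the sufficient condition \eqref{eq:Def_G2_sufficient} in place of \eqref{eq:Def_G2}. First I would handle the standing regularity requirements: for each fixed $\epsilon > 0$ the set $\partial_\epsilon f(x)$ is nonempty, closed and convex (it is an intersection of closed half-spaces, and nonempty since $\partial f(x) \subset \partial_\epsilon f(x)$), so $G(\cdot,\epsilon)$ is a well-defined single-valued map whose values are trivially closed. Local boundedness of $G(\cdot,\epsilon)$ follows from local boundedness of $x \mapsto \partial_\epsilon f(x)$, which in turn follows from the joint continuity of $(x,\epsilon)\mapsto \partial_\epsilon f(x)$ in the Hausdorff metric due to Asplund and Rockafellar~\cite{asplund1969gradients}, cited just above the statement (a continuous set-valued map is locally bounded).

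\textbf{Property \eqref{eq:Def_G1}.} Take $\{x^k\}\to\bx$, $\{\epsilon_k\}\downarrow 0$, and suppose $v^k \triangleq G(x^k,\epsilon_k)\to\bar u$; I must show $\bar u\in\partial f(\bx)$. Since $v^k\in\partial_{\epsilon_k}f(x^k)$, the definition of the $\epsilon$-subdifferential gives, for every $z$, $f(z)\geq f(x^k) + (v^k)^\top(z-x^k) - \epsilon_k$. Letting $k\to\infty$ and using continuity of the convex function $f$ (hence $f(x^k)\to f(\bx)$), the right side converges to $f(\bx) + \bar u^\top(z-\bx)$, so $f(z)\geq f(\bx) + \bar u^\top(z-\bx)$ for all $z$, i.e. $\bar u\in\partial f(\bx)$. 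This is the easy direction.

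\textbf{Property \eqref{eq:Def_G2_sufficient}.} For the first half, fix $\epsilon > 0$; by Asplund--Rockafellar the map $x\mapsto\partial_\epsilon f(x)$ is continuous (in particular outer semicontinuous), and by continuity of the Euclidean projection onto closed convex sets~\cite[Proposition 4.9]{rockafellar2009variational} the composition $x \mapsto \argmin_{v\in\partial_\epsilon f(x)}\|v\| = G(x,\epsilon)$ is continuous, hence $\limsup_{x\to\bx}G(x,\epsilon)=G(\bx,\epsilon)$. For the second half I need $\lim_{\epsilon\downarrow 0}G(\bx,\epsilon) = \argmin_{v\in\partial f(\bx)}\|v\|$. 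The sets $\partial_\epsilon f(\bx)$ are nested and decreasing as $\epsilon\downarrow 0$, with $\bigcap_{\epsilon>0}\partial_\epsilon f(\bx) = \partial f(\bx)$ (a standard fact for convex $f$; each is closed), so by the monotone set-convergence result~\cite[Exercise 4.3]{rockafellar2009variational}, $\partial_{\epsilon}f(\bx)\to\partial f(\bx)$ as $\epsilon\downarrow 0$; applying continuity of projection onto closed convex sets once more yields $G(\bx,\epsilon)=\argmin_{v\in\partial_\epsilon f(\bx)}\|v\| \to \argmin_{v\in\partial f(\bx)}\|v\|$. Together these establish \eqref{eq:Def_G2_sufficient}, hence \eqref{eq:Def_G2}, completing the proof.

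\textbf{Main obstacle.} The only subtle point is justifying $\bigcap_{\epsilon>0}\partial_\epsilon f(\bx)=\partial f(\bx)$ and the attendant set convergence cleanly; but since $\{\partial_\epsilon f(\bx)\}_{\epsilon>0}$ is a decreasing family of closed convex sets whose intersection over a sequence $\epsilon_k\downarrow 0$ is exactly $\partial f(\bx)$, this is immediate from~\cite[Exercise 4.3]{rockafellar2009variational}, mirroring the argument already used for the Goldstein case in \eqref{eq:Goldstein_convergence}. Everything else is routine given the cited continuity of $(x,\epsilon)\mapsto\partial_\epsilon f(x)$.
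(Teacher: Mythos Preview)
Your proposal is correct and follows essentially the same route as the paper: verify \eqref{eq:Def_G1} directly from the defining inequality of $\partial_\epsilon f$, and verify \eqref{eq:Def_G2} via the sufficient condition \eqref{eq:Def_G2_sufficient} using Asplund--Rockafellar Hausdorff continuity of $x\mapsto\partial_\epsilon f(x)$, monotone set convergence~\cite[Exercise 4.3]{rockafellar2009variational} for $\partial_\epsilon f(\bx)\to\partial f(\bx)$, and continuity of the projection~\cite[Proposition 4.9]{rockafellar2009variational}. One small imprecision: your parenthetical ``a continuous set-valued map is locally bounded'' is false in general; local boundedness of $\partial_\epsilon f(\cdot)$ holds here because $\partial_\epsilon f(\bx)$ is itself bounded (finite convex $f$) and Hausdorff-closeness then confines nearby values---the paper simply cites~\cite[Corollary 1]{asplund1969gradients} for this.
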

\begin{proof}
    For any fixed $\epsilon > 0$, the set $\partial_{\epsilon}f(x)$ is closed convex for all $x \in \R^{n}$. Hence, $G$ is a single-valued map, and $G(\cdot, \epsilon)$ is closed-valued.
    Also, for any $\epsilon > 0$, $G(\cdot, \epsilon)$ is locally bounded as the set-valued map $\partial_{\epsilon}f(\cdot)$ is locally bounded~\cite[Corollary 1]{asplund1969gradients}. 
    To establish \eqref{eq:Def_G1}, observe that for any $\bx$,
    \[
        \limsup_{\epsilon \downarrow 0, x \to \bx}G(x, \epsilon) 
        \;\subset\; \limsup_{\epsilon \downarrow 0, x \to \bx}\partial_{\epsilon}f(x) 
        \;\subset\; \partial f(\bx),
    \]
    where the second inclusion follows directly from the continuity of $f$ and the definition of $\partial_{\epsilon}f(x)$.
    We show property \eqref{eq:Def_G2} by verifying the sufficient condition \eqref{eq:Def_G2_sufficient}. Fix $\bx \in \R^{n}$ and consider any sequence $\{\epsilon_{k}\} \downarrow 0$. Since $\{\partial_{\epsilon_k}f(\bx)\}$ is a monotonic sequence of sets, we apply~\cite[Exercise 4.3]{rockafellar2009variational} to obtain the set convergence
    \[
        \lim_{k \to +\infty}\partial_{\epsilon_k}f(\bx) = \bigcap_{k \in \N}\operatorname{cl} \big(\partial_{\epsilon_k}f(\bx)\big) = \bigcap_{k \in \N}\partial_{\epsilon_k} f(\bx) = \partial f(\bx).
    \]
    Consequently, it follows from~\cite[Proposition 4.9]{rockafellar2009variational} that $\argmin_{v \in \partial_{\epsilon_k} f(\bx)} \|v\| \to \argmin_{v \in \partial f(\bx)} \|v\|$,
    establishing the second limit in \eqref{eq:Def_G2_sufficient}.
    
    To complete the proof, it remains to show that $G(\cdot, \epsilon)$ is osc for any fixed $\epsilon > 0$. By~\cite[Proposition 5]{asplund1969gradients}, we have $\lim_{x \to \bx}\Hausd(\partial_{\epsilon}f(x), \partial_{\epsilon}f( \bx)) = 0$ for any fixed $\bx$ and $\epsilon > 0$.
    Since $\partial_{\epsilon}f(\cdot)$ is locally bounded at $\bx$, it follows from~\cite[Corollary 5.21]{rockafellar2009variational} that this convergence in Hausdorff distance is equivalent to the continuity of the set-valued map $\partial_{\epsilon}f(\cdot)$ at $\bx$. Thus, for any sequence $x^k \to \bx$, we have $\partial_{\epsilon} f(x^k) \to \partial_{\epsilon} f(\bx)$ and the convergence of their minimal norm elements by using~\cite[Proposition 4.9]{rockafellar2009variational}:
    \[
        \lim_{k \to +\infty} G(x^k, \epsilon) 
        = \lim_{k \to +\infty} \,\argmin_{v \in \partial_{\epsilon} f(x^k)} \|v\| 
        \;=\, \argmin_{v \in \partial_{\epsilon}f(\bx)} \|v\|.
    \]
    Thus, \eqref{eq:Def_G2_sufficient} hold, and the proof is completed.
\end{proof}

We conclude this section by presenting an alternative construction of descent-oriented subdifferentials based on the Moreau envelope. 
A function $f: \R^n \to \R$ is $\rho$-weakly convex if $f + \frac{1}{2} \rho \|\cdot\|^2$ is a convex function. Given $\epsilon > 0$, the Moreau envelope and proximal mapping of $f$ are defined as $e_{\epsilon} f(x) \triangleq \inf_{z} \{f(z) + \frac{1}{2\epsilon} \|z-x\|^2 \}$ and $P_{\epsilon} f(x) \triangleq \argmin_{z} \{f(z) + \frac{1}{2\epsilon} \|z-x\|^2 \}$.

\begin{proposition}
    Let $f: \R^{n}\to \R$ be $\rho$-weakly convex and bounded from below. For all $\epsilon \in (0, \rho^{-1})$, the Moreau envelope $e_{\epsilon} f$ is continuously differentiable. Fix $\bar\epsilon \in (0, \rho^{-1})$. Then the single-valued map $G: \R^{n}\times (0, +\infty) \rightarrow \R^{n}$ defined by
    \begin{equation}\label{Moreau}
        G(x, \epsilon) = \left\{
        \begin{array}{cl}
        \nabla e_{\epsilon} f(x) &\text{ when } \epsilon \in (0, \bar\epsilon\,], \\
        \nabla e_{\bar\epsilon} f(x) &\text{ otherwise}
        \end{array}\right.
    \end{equation}
    is a descent-oriented subdifferential for $f$.
\end{proposition}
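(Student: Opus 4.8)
The plan is to verify the three requirements of Definition \ref{def:G} for the map $G$ defined in \eqref{Moreau}: closed-valuedness and local boundedness of $G(\cdot,\epsilon)$ for each fixed $\epsilon$, property \eqref{eq:Def_G1}, and property \eqref{eq:Def_G2}. The first fact to establish is the preliminary claim that $e_\epsilon f$ is $C^1$ for $\epsilon \in (0,\rho^{-1})$: since $f + \tfrac{1}{2}\rho\|\cdot\|^2$ is convex, for such $\epsilon$ the function $z \mapsto f(z) + \tfrac{1}{2\epsilon}\|z-x\|^2$ is strongly convex (its Hessian-in-the-convex-sense is $\geq (\epsilon^{-1} - \rho) I \succ 0$), so the proximal map $P_\epsilon f(x)$ is single-valued, and the standard Moreau envelope calculus gives $\nabla e_\epsilon f(x) = \epsilon^{-1}(x - P_\epsilon f(x))$, which is continuous in $x$ because $P_\epsilon f$ is (Lipschitz) continuous. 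Boundedness from below is what guarantees the infimum defining $e_\epsilon f$ is finite. For fixed $\epsilon$, single-valuedness and continuity of $G(\cdot,\epsilon)$ immediately give closed-valuedness and local boundedness; note that $G(\cdot,\epsilon) = G(\cdot,\bar\epsilon)$ for $\epsilon > \bar\epsilon$, so only the range $(0,\bar\epsilon]$ matters.

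Next I would establish \eqref{eq:Def_G1}. Take $x^k \to \bx$ and $\epsilon_k \downarrow 0$ with $G(x^k,\epsilon_k) \to \bar u$; for $k$ large, $\epsilon_k \le \bar\epsilon$ so $G(x^k,\epsilon_k) = \nabla e_{\epsilon_k} f(x^k) = \epsilon_k^{-1}(x^k - p^k)$ where $p^k \triangleq P_{\epsilon_k} f(x^k)$. A standard fact for the proximal mapping of a $\rho$-weakly convex, bounded-below function is $\nabla e_{\epsilon_k} f(x^k) \in \partial f(p^k)$ (the optimality condition for the proximal subproblem, valid since $\epsilon_k < \rho^{-1}$); moreover $\|p^k - x^k\| = \epsilon_k \|\nabla e_{\epsilon_k} f(x^k)\| \to 0$ because $\{\nabla e_{\epsilon_k} f(x^k)\}$ is convergent hence bounded, so $p^k \to \bx$. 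Then outer semicontinuity of the Clarke subdifferential $\partial f$ yields $\bar u \in \partial f(\bx)$, which is \eqref{eq:Def_G1}.

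Finally, for \eqref{eq:Def_G2} I would verify the sufficient condition \eqref{eq:Def_G2_sufficient}. The outer-semicontinuity part, $\limsup_{x\to\bx} G(x,\epsilon) = G(\bx,\epsilon)$ for each fixed $\epsilon > 0$, is immediate from the continuity of $x \mapsto G(x,\epsilon)$ already shown. For the second limit, I need $\nabla e_\epsilon f(\bx) \to \argmin_{v\in\partial f(\bx)}\|v\|$ as $\epsilon \downarrow 0$. This is the classical fact that the gradient of the Moreau envelope of a (weakly) convex function converges to the least-norm subgradient; it can be proved by the argument that $p^\epsilon \triangleq P_\epsilon f(\bx)$ satisfies $\epsilon^{-1}(\bx - p^\epsilon) \in \partial f(p^\epsilon)$ together with the variational inequality characterizing $p^\epsilon$, which gives $\|\epsilon^{-1}(\bx - p^\epsilon)\| \le \|v\|$ for every $v \in \partial f(\bx)$ (using $\rho$-weak convexity to control the cross term), so the envelope gradients are bounded by $\min_{v\in\partial f(\bx)}\|v\|$; then any limit point lies in $\partial f(\bx)$ by the osc argument of the previous paragraph, and a limit point of norm at most the minimal norm must be the (unique) minimal-norm element.

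The main obstacle is the second limit in \eqref{eq:Def_G2_sufficient}: establishing carefully that $\nabla e_\epsilon f(\bx) \to \argmin_{v\in\partial f(\bx)}\|v\|$ for merely weakly convex $f$. One must handle the weak-convexity correction term $\tfrac12\rho\|\cdot\|^2$ in the variational inequality to get the norm bound $\|\nabla e_\epsilon f(\bx)\| \le \|v\|$ for all $v \in \partial f(\bx)$ (or at least an asymptotically sharp version of it as $\epsilon \downarrow 0$), and then combine this bound with the inclusion $\nabla e_\epsilon f(\bx) \in \partial f(P_\epsilon f(\bx))$ and $P_\epsilon f(\bx) \to \bx$ to pin down the limit as the least-norm subgradient; everything else is routine Moreau-envelope calculus.
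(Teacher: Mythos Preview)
Your proposal is correct and follows essentially the same route as the paper: verify closed-valuedness and local boundedness via continuity of $P_\epsilon f$, establish \eqref{eq:Def_G1} using $\nabla e_\epsilon f(x)\in\partial f(P_\epsilon f(x))$ together with $P_\epsilon f(x)\to\bx$ and outer semicontinuity of $\partial f$, and check \eqref{eq:Def_G2_sufficient} by combining a norm bound on $\nabla e_\epsilon f(\bx)$ with the same osc argument to force the limit to be the least-norm subgradient. The only minor deviations are that (i) for $p^k\to\bx$ in \eqref{eq:Def_G1} you use boundedness of the (assumed convergent) gradient sequence, whereas the paper uses the uniform estimate $\|P_\epsilon f(x)-x\|\le\sqrt{2\epsilon(f(x)-\inf f)}$ coming from boundedness below, and (ii) the exact norm inequality you suggest, $\|\nabla e_\epsilon f(\bx)\|\le\|v\|$, does not hold for weakly convex $f$; the paper derives instead $\|\nabla e_\epsilon f(\bx)\|^2\le \|v\|^2/(1-2\epsilon\rho)$ by combining the $(\epsilon^{-1}-\rho)$-strong convexity of the proximal objective with the subgradient inequality for $f+\tfrac{\rho}{2}\|\cdot\|^2$ and Young's inequality --- exactly the ``asymptotically sharp version'' you anticipated as the main obstacle.
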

\begin{proof}
    Fix any $\bx \in \R^n$. Since $f + \frac{1}{2\rho} \|\cdot\|^2$ is convex, standard results about the Moreau envelope and proximal mapping for convex functions (see, e.g.,~\cite[Theorem 2.26]{rockafellar2009variational}) imply that for all $\epsilon \in (0, \rho^{-1})$, $P_{\epsilon}f$ is single-valued and continuous, and $e_{\epsilon} f$ is continuously differentiable with $\nabla e_{\epsilon} f(x) = \frac{1}{\epsilon} \big(x - P_{\epsilon}f(x)\big)$. 
    Thus, given $\bar\epsilon \in (0, \rho^{-1})$, the map $G$ in \eqref{Moreau} is well-defined. For each fixed $\epsilon \in (0, \bar\epsilon\,]$, $G(\cdot, \epsilon)$ is locally bounded due to the continuity of $P_{\epsilon} f$. By the optimality condition for the proximal mapping, $0 \in \partial f(P_{\epsilon} f(x)) + \frac{1}{\epsilon} \big(P_{\epsilon} f(x) - x\big)$, so $G(x, \epsilon) = \nabla e_{\epsilon} f(x) = \frac{1}{\epsilon} \big(x -  P_{\epsilon} f(x) \big) \in \partial f(P_{\epsilon} f(x))$ for $\epsilon \in (0, \bar\epsilon\,]$. Since $f(P_{\epsilon} f(x)) + \frac{1}{2\epsilon} \|P_{\epsilon} f(x) - x\|^2 \leq f(x)$ and $f$ is bounded from below, we obtain $\|P_{\epsilon} f(x) - x\| \leq \sqrt{2 \epsilon (f(x) - \inf f)}$, implying $\lim_{x \to \bx, \epsilon \downarrow 0} P_{\epsilon} f(x) = \bx$. By the outer semicontinuity of $\partial f$,
    \[
        \limsup_{x \to \bx, \epsilon \downarrow 0} G(x, \epsilon) \subset \limsup_{x \to \bx, \epsilon \downarrow 0} \partial f(P_{\epsilon} f(x)) \subset \partial f(\bx),
    \]
    establishing property \eqref{eq:Def_G1}. To proceed, we aim to verify \eqref{eq:Def_G2_sufficient}. The first condition $\limsup_{x \to \bx} G(x, \epsilon) = G(\bx, \epsilon)$ for $\epsilon > 0$ is a direct consequence of the continuity of $P_{\epsilon} f$. It remains to show that $\lim_{\epsilon \downarrow 0} G(\bx, \epsilon) = \argmin_{v \in \partial f(\bx)} \|v\|$. Let $\{\epsilon_k\} \downarrow 0$ with $\epsilon_k \in (0, \min\{\bar\epsilon, 1/(2 \rho)\})$. 
    We first show that the sequence $\{G(\bx, \epsilon_k)\}$ is bounded. Since $f + \frac{1}{2\epsilon_k} \| \cdot - \bx\|^2$ is $(1/{\epsilon_k} - \rho)$-strongly convex, its minimizer $P_{\epsilon_k} f(\bx)$ satisfies
    \begin{equation}\label{eq:strong_cvx}
        f(\bx) \geq \left(f(P_{\epsilon_k} f(\bx)) + \frac{1}{2\epsilon_k} \|\bx - P_{\epsilon_k} f(\bx)\|^2\right) + \frac{1}{2} \left(\frac{1}{\epsilon_k} - \rho\right) \|\bx - P_{\epsilon_k} f(\bx)\|^2.
    \end{equation}
    Additionally, since $f + \frac{1}{2} \rho \|\bx - \cdot\|^2$ is convex, for any $v \in \partial f(\bx)$,
    \begin{equation}\label{eq:cvx}
        f(P_{\epsilon_k}f(\bx)) + \frac{1}{2} \rho \|\bx - P_{\epsilon_k} f(\bx)\|^2
        \geq f(\bx) + v^\top \big(P_{\epsilon_k}f(\bx) - \bx\big).
    \end{equation}
    Combining \eqref{eq:strong_cvx} and \eqref{eq:cvx}, and using the inequality $\frac{1}{2} \big(\epsilon_k \|v\|^2 + \frac{1}{\epsilon_k} \|P_{\epsilon_k} f(\bx) - \bx\|^2\big) \geq -v^\top \big(P_{\epsilon_k}f(\bx) - \bx\big)$, we obtain that $\|v\|^2 \geq (1 - 2\epsilon_k \rho) \|P_{\epsilon_k}f(\bx) - \bx\|^2 / \epsilon_k^2$ for any $v \in \partial f(\bx)$. As $\epsilon_k < \min\{\bar\epsilon, 1/(2\rho)\}$ and $\epsilon_k \downarrow 0$, it follows that $\|G(\bx, \epsilon_k)\|^2 = \|P_{\epsilon_k} f(\bx) - \bx\|^2 / \epsilon_k^2 \leq \inf_{v \in \partial f(\bx)} \|v\|^2 / (1 - 2\epsilon_k \rho)$. Hence, $\{G(\bx, \epsilon_k)\}$ is bounded and $\limsup_{k \to +\infty} \|G(\bx, \epsilon_k)\| \leq \inf_{v \in \partial f(\bx)} \|v\|$.

    Next, recall that $\lim_{x \to \bx, \epsilon \downarrow 0} P_{\epsilon} f(x) = \bx$ and, thus, $P_{\epsilon_k} f(\bx) \to \bx$. By the outer semicontinuity of $\partial f$ and the fact that $G(\bx, \epsilon_k) \in \partial f(P_{\epsilon_k} f(\bx))$, every accumulation point of $\{G(\bx, \epsilon_k)\}$ must belong to $\partial f(\bx)$. Moreover, the previous bound $\limsup_{k \to +\infty} \|G(\bx, \epsilon_k)\| \leq \inf_{v \in \partial f(\bx)} \|v\|$ ensures that the only possible accumulation point is $\argmin_{v \in \partial f(\bx)} \|v\|$. Therefore, $G(\bx, \epsilon_k) \to \argmin_{v \in \partial f(\bx)} \|v\|$. This completes the proof.
\end{proof}

\section{A Descent-Oriented Subgradient Method}
\label{sec4:algorithm_vanilla}

Using the descent-oriented subdifferential as a first-order oracle, this section presents a general framework of descent algorithms for minimizing a locally Lipschitz continuous function $f$. Throughout this section, we assume that a descent-oriented subdifferential $G: \R^{n}\times (0, +\infty) \rightrightarrows \R^{n}$ for $f$ is given. The construction of such a map for structured nonsmooth functions will be discussed in the next section.

It follows from Proposition \ref{prop:descent_direc} that at any non-stationary point,  descent in the objective can be achieved by selecting a sufficiently small parameter $\epsilon$ along with a proper stepsize $\eta$. Our proposed algorithm progressively reduces  $\epsilon$ based on the norm of the descent-oriented subgradient at the current iterate, and determines  $\eta$ at a given $\epsilon$ via a line search procedure.  In general, a diminishing sequence of $\epsilon$ is needed to ensure subsequential convergence to stationary points, since property  \eqref{eq:Def_G1} only forces $G(x, \epsilon)$ approaches elements of $\partial f(x)$ when $\epsilon \downarrow 0$. For structured nonsmooth problems, we further propose an alternative strategy for adjusting $\epsilon$  in Section \ref{sec5:algorithm_adaptive}, which does not necessarily require $\epsilon$ to vanish asymptotically, yet still  guarantees asymptotic convergence to stationarity and achieves local linear convergence. 
We now formally present the descent-oriented subgradient method in Algorithm \ref{alg1}.

\begin{algorithm}[htp]
\caption{A basic descent-oriented subgradient method}
\begin{algorithmic}[1]
    \STATE {\bf Input:} 
    starting point $x_{0}$,
    regularization parameter $\epsilon_{0,0} > 0$,
    optimality tolerances $\epsilon_{\text{tol}}, \nu_{\text{tol}} \geq 0$, initial stationary target $\nu_{0}$, 
    reduction factors $\theta_{\epsilon}, \theta_{\nu} \in (0, 1)$, 
    and the Armijo parameter $\alpha \in (0,1)$
    \FOR{$k= 0, 1, \cdots $}
        \FOR{$i = 0, 1, \cdots $}
            \STATE Set $\epsilon_{k,i}= \epsilon_{k,0} \, 2^{-i}$ \hfill 
            \STATE Update direction $g^{k, i} \in G(x^k, \epsilon_{k,i})$
            \IF{$\epsilon_{k,i}\leq \epsilon_{\text{tol}}$ and $\|g^{k, i}\| \leq \nu_{\text{tol}}$}
                \STATE Set $i_{k}= i$ and {\bf return} $x^k$ \hfill $\triangleright$\textit{ approximate stationary}
            \ENDIF
            \FOR{$j = 0, 1, \cdots, i$}
                \STATE Set $\eta = \epsilon_{k,0} \, 2^{-j}$ \hfill $\triangleright$\textit{ line search}
                \IF{$f(x^k- \eta \, g^{k, i}) \leq f(x^k) - \alpha \, \eta \|g^{k, i}\|^{2}$}
                    \STATE Set $i_{k}= i$ and go to line 12
                \ENDIF
            \ENDFOR
        \ENDFOR
        \STATE Update $x^{k+1}= x^k- \eta_{k}\, g^{k, i}$ with $\eta_{k}= \eta$ 
        \IF{$\|g^{k, i_k}\| \leq \nu_{k}$}
            \STATE Set $\nu_{k+1} = \theta_{\nu} \cdot \nu_{k}$ and $\epsilon_{k+1, 0} = \theta_{\epsilon} \cdot \epsilon_{k, 0}$
        \ELSE
            \STATE Set $\nu_{k+1}= \nu_{k}$ and $\epsilon_{k+1,0}= \epsilon_{k,0}$ 
        \ENDIF
    \ENDFOR
\end{algorithmic}
\label{alg1}
\end{algorithm}

Below we establish the well-definedness and convergence of Algorithm \ref{alg1}.

\begin{proposition}[Finite termination of the inner loop $i$ in Algorithm \ref{alg1}]
\label{prop:inner_loop} 
   Let $k \in \N$ and fix any optimality tolerances $\epsilon_{\text{tol}}, \nu_{\text{tol}} \geq 0$. If $0 \notin \partial f(x^k)$, then the inner loop indexed by $i$ terminates in finitely many iterations, i.e., $i_k < +\infty$.
\end{proposition}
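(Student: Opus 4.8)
The plan is to show that the inner loop cannot run forever by deriving a contradiction with Proposition \ref{prop:descent_direc}. Suppose, for contradiction, that $0 \notin \partial f(x^k)$ but the inner loop indexed by $i$ never terminates, i.e., for every $i \in \N$ neither the stationarity test on line 6 passes nor the line search on lines 9--14 succeeds. Set $\bx = x^k$ in Proposition \ref{prop:descent_direc}, with the same Armijo parameter $\alpha \in (0,1)$, and obtain the positive scalars $\epsilon_{\bx}$ and $\eta_{\bx}$. Since $\epsilon_{k,i} = \epsilon_{k,0}\, 2^{-i} \downarrow 0$, for all sufficiently large $i$ we have $\epsilon_{k,i} < \epsilon_{\bx}$, and Proposition \ref{prop:descent_direc} then guarantees both $0 \notin \limsup_{x \to x^k} G(x, \epsilon_{k,i})$ and the descent inequality
\[
    f(x^k - \eta\, g) \leq f(x^k) - \alpha\, \eta \|g\|^2,
    \qquad \forall\, g \in \limsup_{x \to x^k} G(x, \epsilon_{k,i}),\ \forall\, \eta \in (0, \eta_{\bx}).
\]
In particular, since $g^{k,i} \in G(x^k, \epsilon_{k,i}) \subset \limsup_{x \to x^k} G(x, \epsilon_{k,i})$, this inequality holds for $g = g^{k,i}$.

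Next I would examine what the non-termination of the inner loop forces. Because the line search on lines 9--14 tries stepsizes $\eta = \epsilon_{k,0}\, 2^{-j}$ for $j = 0, 1, \dots, i$, as $i$ grows the smallest candidate stepsize $\epsilon_{k,0}\, 2^{-i}$ tends to $0$; hence for $i$ large enough there is a $j \le i$ with $\epsilon_{k,0}\, 2^{-j} \in (0, \eta_{\bx})$. For that $j$, the Armijo test $f(x^k - \eta g^{k,i}) \leq f(x^k) - \alpha\, \eta \|g^{k,i}\|^2$ is satisfied by the displayed descent inequality, so the line search succeeds and the loop sets $i_k = i$ and exits — contradicting the assumption that the inner loop runs forever.

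The one subtlety I want to be careful about is the interaction between the two potential exit routes and the possibility that $g^{k,i} = 0$ or that $\|g^{k,i}\|$ is tiny. If $\epsilon_{\text{tol}} > 0$ or $\nu_{\text{tol}} > 0$, the stationarity test on line 6 may itself trigger and terminate the loop (which is fine — that is still $i_k < +\infty$). If $\|g^{k,i}\| = 0$ for some $i$ with $\epsilon_{k,i} > \epsilon_{\text{tol}}$, the line search condition becomes $f(x^k) \le f(x^k)$, which holds trivially, so the loop still exits. And the first conclusion of Proposition \ref{prop:descent_direc}, namely $0 \notin \limsup_{x \to x^k} G(x, \epsilon_{k,i})$ for small $\epsilon_{k,i}$, rules out $g^{k,i} = 0$ for all large $i$ anyway, so there is no inconsistency. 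The main obstacle — really the only thing requiring attention — is bookkeeping the index ranges: ensuring that for large $i$ the inner line-search index set $\{0, 1, \dots, i\}$ does contain a $j$ with $\epsilon_{k,0}\, 2^{-j}$ below the threshold $\eta_{\bx}$ furnished by Proposition \ref{prop:descent_direc}, and simultaneously $\epsilon_{k,i}$ below $\epsilon_{\bx}$. Both hold once $i \geq \log_2(\epsilon_{k,0} / \min\{\epsilon_{\bx}, \eta_{\bx}\})$, so choosing $i$ past that threshold closes the argument.
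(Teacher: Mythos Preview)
Your proof is correct and follows essentially the same approach as the paper: invoke Proposition~\ref{prop:descent_direc} at $\bx = x^k$ to obtain thresholds $\epsilon_{\bx}, \eta_{\bx}$, then observe that for $i$ large enough both $\epsilon_{k,i} < \epsilon_{\bx}$ and some candidate stepsize $\epsilon_{k,0}\,2^{-j}$ with $j \le i$ lies in $(0,\eta_{\bx})$, so the Armijo test must pass. The paper's version is more terse and phrased directly rather than by contradiction, but the logic is identical; your extra remarks on the edge cases ($g^{k,i}=0$, interaction with the stationarity test) are sound but not strictly needed.
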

\begin{proof}
    By Proposition \ref{prop:descent_direc}, if $0 \notin \partial f(x^k)$, then there exist constants $\bar\epsilon, \bar\eta > 0$ such that for all $\epsilon \in (0, \bar\epsilon)$ and $\eta \in (0, \bar\eta)$, the following inequality holds:
    \[
        f(x^k- \eta \, g^k) \leq f(x^k) - \alpha \, \eta \|g^k\|^{2},
        \qquad\forall\, g^k\in G(x^k, \epsilon).
    \]
    Hence, either $x^k$ is certified as an approximate stationary point with $\epsilon_{k,i}\leq \epsilon_{\text{tol}}$ and $\|g^{k, i}\| \leq \nu_{\text{tol}}$, or the descent condition in line 10 is eventually satisfied for sufficiently large $i$. The result follows.
\end{proof}

\begin{remark}
    The above analysis  implies that, for any $k \in \N$, the inner loop would still have the finite termination if we simply used the stepsize $\eta = \epsilon_{k,i}$, rather than performing a backtracking line search over $\{\epsilon_{k,0}, \epsilon_{k,0}/2, \ldots, \epsilon_{k,i}\}$. We adopt the line search strategy in Algorithm~\ref{alg1} to enhance its practical performance.
\end{remark}

The following result was originally established by Kiwiel~\cite[Theorem 3.3]{kiwiel2007convergence} in the convergence analysis of the gradient sampling algorithm. It turns out to be useful in a more general setting, and we restate it here by emphasizing key consequences of a descent inequality, without specifying the update rule for the sequence $\{x^k\}$ and the choice of directions $\{g^k\}$.
\begin{lemma}
\label{lem:consequence_of_descent}
    Suppose that $f$ is bounded from below, and there are sequences $\{x^k\}$ and $\{g^k\}$ satisfying the descent inequality
    \[
        f(x^{k+1}) \leq f(x^k) - \alpha\|x^{k+1}- x^k\| \cdot \|g^k\|
        \quad\text{for some $\alpha \in (0,1)$}.
    \]
    If $\{x^k\}$ has an accumulation point $\bx$, then one of the following holds:\\
    (a) $\{x^k\}$ converges to $\bx$\\
    (b) $\liminf_{k \to +\infty}\max\{\|x^k- \bx\|, \|g^k\| \} = 0$.
\end{lemma}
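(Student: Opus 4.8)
The plan is to follow the standard argument from Kiwiel's analysis of gradient sampling, adapted to this abstract setting. The key observation is that the descent inequality, when summed, controls the total path length weighted by the subgradient norms. Concretely, suppose (a) fails, so that $\{x^k\}$ does not converge to $\bx$; I want to show (b), i.e., $\liminf_{k\to\infty}\max\{\|x^k-\bx\|,\|g^k\|\}=0$. Arguing by contradiction, assume instead that there exist $\delta>0$ and an index $K$ such that $\max\{\|x^k-\bx\|,\|g^k\|\}\geq\delta$ for all $k\geq K$.

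\medskip
\noindent\textbf{Main steps.} First, since $\bx$ is an accumulation point and $\{x^k\}$ does not converge to it, there must be infinitely many ``excursions'' away from $\bx$: there exists some radius $r\in(0,\delta)$ and infinitely many pairs of indices along which $\{x^k\}$ travels from inside $\overline{\B}_r(\bx)$ to outside $\overline{\B}_{\delta}(\bx)$ (pick $r$ so that $x^k$ is within $r$ of $\bx$ infinitely often, which is possible since $\bx$ is an accumulation point, and note $x^k$ leaves $\overline{\B}_{\delta}(\bx)$ infinitely often since it does not converge to $\bx$). Second, on the indices $k\geq K$ belonging to such an excursion, we have $\|x^k-\bx\|\le\delta$ is violated frequently, but on the portion of the excursion where $\|x^k-\bx\|<\delta$ the bound $\max\{\|x^k-\bx\|,\|g^k\|\}\ge\delta$ forces $\|g^k\|\ge\delta$; hence summing the descent inequality over the steps of one excursion gives
\[
    f(x^{k})-f(x^{k'}) \;\ge\; \alpha\,\delta\sum \|x^{j+1}-x^j\| \;\ge\; \alpha\,\delta\,(\delta-r),
\]
where the last inequality is the triangle inequality: the path connecting a point in $\overline{\B}_r(\bx)$ to a point outside $\overline{\B}_\delta(\bx)$ has total length at least $\delta-r$. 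Wait — one must be a little careful that along an excursion \emph{all} intermediate iterates stay in the region where $\|g^j\|\ge\delta$ is guaranteed; the clean way is to choose the excursion to be a run of consecutive indices during which $\|x^j-\bx\|<\delta$ throughout, starting from $\overline{\B}_r(\bx)$, and stopping at the first index where the iterate would exceed $\delta$ — at that last step we also use the descent inequality with $\|g^j\|\ge\delta$ coming from $\|x^j-\bx\|<\delta$, or just absorb it. Third, since $f(x^k)$ is nonincreasing (by the descent inequality and $\alpha>0$) and bounded below, $\{f(x^k)\}$ converges, so the total decrease over all excursions is finite; but each excursion contributes at least the fixed positive amount $\alpha\delta(\delta-r)$, and there are infinitely many excursions — contradiction.

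\medskip
\noindent\textbf{Anticipated obstacle.} The only delicate point is the bookkeeping of the excursions: making precise the claim that infinitely many disjoint blocks of consecutive indices exist on which the iterate starts inside $\overline{\B}_r(\bx)$, stays within distance $\delta$ of $\bx$ (so the $\max$ condition yields $\|g^j\|\ge\delta$), and the net displacement over the block is at least $\delta-r$. This requires choosing $r<\delta$ with $x^k\in\overline{\B}_r(\bx)$ infinitely often (accumulation point), noting $x^k\notin\overline{\B}_\delta(\bx)$ infinitely often (no convergence to $\bx$), and then alternately extracting entry-times into $\overline{\B}_r(\bx)$ and subsequent first-exit-times from $\overline{\B}_\delta(\bx)$ — and checking the indices involved can be taken $\ge K$. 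Once this combinatorial setup is in place, the summation of the descent inequality and the telescoping of $f(x^k)$ against its lower bound finish the argument immediately. This is exactly the structure of Kiwiel's proof, so I would cite \cite[Theorem 3.3]{kiwiel2007convergence} for the template and reproduce the few lines needed in this generality.
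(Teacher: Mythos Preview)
Your proposal is correct and follows precisely the approach the paper takes: the paper does not give its own proof of this lemma but simply attributes it to Kiwiel~\cite[Theorem~3.3]{kiwiel2007convergence}, and your sketch reproduces Kiwiel's excursion argument faithfully. The only cosmetic point is that non-convergence to $\bx$ guarantees $\|x^k-\bx\|\geq\epsilon_0$ infinitely often for \emph{some} $\epsilon_0>0$, not necessarily for the $\delta$ coming from the negation of (b); replacing $\delta$ by $\min\{\delta,\epsilon_0\}$ (which preserves the contradiction hypothesis) makes the excursion bookkeeping go through exactly as you describe.
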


We prove the subsequential convergence by using Proposition \ref{prop:descent_direc} and Lemma \ref{lem:consequence_of_descent}.

\begin{theorem}[Convergence of Algorithm \ref{alg1}]
\label{thm:convergence_alg1}
    Assume that $f$ is bounded from below. Let $\{x^k\}$ be a sequence generated by Algorithm \ref{alg1} with $\epsilon_{\text{tol}}= \nu_{\text{tol}}= 0$. Then, one of the following occurs:\\
    (a) For some $k \in \N$, the inner loop does not terminate. Consequently,
    $x^k$ is a stationary point.\\
    (b) Any accumulation point of $\{x^k\}$ is a stationary point.
\end{theorem}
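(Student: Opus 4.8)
The plan is to argue by cases on whether the inner loop always terminates. For case (a), suppose the inner loop fails to terminate at some iteration $k$; then by the contrapositive of Proposition~\ref{prop:inner_loop}, we must have $0 \in \partial f(x^k)$, i.e., $x^k$ is stationary. This handles case (a) immediately.

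For case (b), assume the inner loop terminates at every iteration, so that $\{x^k\}$ is an infinite sequence and, by Proposition~\ref{prop:inner_loop}, $0 \notin \partial f(x^k)$ for all $k$ (since with $\epsilon_{\mathrm{tol}} = \nu_{\mathrm{tol}} = 0$ the algorithm never returns early). The key structural observation is that the line search in Algorithm~\ref{alg1} guarantees $f(x^{k+1}) \le f(x^k) - \alpha\,\eta_k\|g^{k,i_k}\|^2 = f(x^k) - \alpha\|x^{k+1}-x^k\|\cdot\|g^{k,i_k}\|$, so Lemma~\ref{lem:consequence_of_descent} applies with $g^k = g^{k,i_k}$. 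Let $\bx$ be an accumulation point of $\{x^k\}$, say $x^k \to_N \bx$. Lemma~\ref{lem:consequence_of_descent} gives two possibilities. In either possibility, I want to extract a subsequence along which $x^k \to \bx$, $g^{k,i_k} \to 0$, and $\epsilon_{k,i_k} \to 0$, and then invoke property~\eqref{eq:Def_G1}: since $g^{k,i_k} \in G(x^k, \epsilon_{k,i_k})$, passing to the limit yields $0 \in \limsup_{\epsilon\downarrow 0,\,x\to\bx} G(x,\epsilon) \subset \partial f(\bx)$, so $\bx$ is stationary.

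The crux is to establish that $\epsilon_{k,i_k} \to 0$ along the relevant subsequence and that $\|g^{k,i_k}\| \to 0$ there. I would argue this via the monitoring mechanism (lines 18--22): if the ``success'' branch (line 19) is triggered infinitely often, then $\nu_k \downarrow 0$ and $\epsilon_{k,0} \downarrow 0$; moreover, whenever it is triggered we have $\|g^{k,i_k}\| \le \nu_k$, which forces a subsequence of $\{g^{k,i_k}\}$ to zero, and then $\epsilon_{k,i_k} \le \epsilon_{k,0} \to 0$ handles the $\epsilon$ part. If instead the success branch is triggered only finitely often, then eventually $\nu_k \equiv \bar\nu > 0$ and $\epsilon_{k,0} \equiv \bar\epsilon_0 > 0$ are constant and $\|g^{k,i_k}\| > \bar\nu$ for all large $k$; but then the descent inequality $f(x^{k+1}) \le f(x^k) - \alpha\eta_k\bar\nu^2$ combined with $\sum \eta_k < \infty$ (which follows from telescoping the bounded-below objective, noting $\eta_k\|g^{k,i_k}\|\le \eta_k \cdot C$ on the convergent-along-$N$ piece, or more carefully from $\sum_k \alpha\eta_k\|g^{k,i_k}\|^2 < \infty$) would need to be reconciled with the fact that $x^k$ does not converge — here I would lean on case (b) of Lemma~\ref{lem:consequence_of_descent}, which directly gives $\liminf_k \max\{\|x^k-\bx\|, \|g^{k,i_k}\|\} = 0$, contradicting $\|g^{k,i_k}\| > \bar\nu$. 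Thus the finitely-often case cannot occur (unless $\{x^k\}$ converges to $\bx$, case (a) of the Lemma, which I would then need to re-examine; but convergence of $\{x^k\}$ plus $\sum\eta_k\|g^k\|^2<\infty$ still yields a subsequence with $g^{k,i_k}\to 0$ unless $\eta_k$ is bounded below, in which case the line search would have forced larger steps — this edge needs care).

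I expect the main obstacle to be the bookkeeping around the two interacting adaptive sequences $\{\nu_k\}$ and $\{\epsilon_{k,0}\}$ and cleanly ruling out the pathological scenario where $\|g^{k,i_k}\|$ stays bounded away from zero while the stepsizes $\eta_k$ shrink fast enough to keep the objective summable — the resolution should come from combining the appropriate alternative of Lemma~\ref{lem:consequence_of_descent} with the update rule in lines 18--22, but making the logic airtight (especially the interaction with alternative (a) of the Lemma, where $\{x^k\}$ converges) will require the most attention. A secondary technical point is verifying that the backtracking stepsize $\eta_k = \epsilon_{k,0} 2^{-j}$ with $j \le i_k$ does not itself vanish too slowly or too quickly; here the inequality $\eta_k \le \epsilon_{k,0}$ and the fact that when the success branch fires we get both $\|g^{k,i_k}\|\le\nu_k$ and $\epsilon_{k,i_k}\le\epsilon_{k,0}$ should suffice to feed property~\eqref{eq:Def_G1}.
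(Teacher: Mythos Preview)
Your overall plan is correct for Case~2 (the reduction in line~13 occurs infinitely often): there you correctly combine Lemma~\ref{lem:consequence_of_descent} with $\nu_k\downarrow 0$ and $\epsilon_{k,0}\downarrow 0$ to extract a subsequence on which $x^k\to\bx$, $g^{k,i_k}\to 0$, and $\epsilon_{k,i_k}\le\epsilon_{k,0}\to 0$, and then property~\eqref{eq:Def_G1} finishes. This is exactly the paper's argument for that case.

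The genuine gap is in your handling of Case~1 (the success branch fires only finitely often, so eventually $\epsilon_{k,0}\equiv\underline\epsilon$, $\nu_k\equiv\underline\nu$, and $\|g^{k,i_k}\|>\underline\nu$). You yourself flag the problematic sub-case: Lemma~\ref{lem:consequence_of_descent}(a) may hold, i.e., $x^k\to\bx$. Your hoped-for rescue, ``$\sum\eta_k\|g^{k,i_k}\|^2<\infty$ still yields a subsequence with $g^{k,i_k}\to 0$,'' is false here: since $\|g^{k,i_k}\|>\underline\nu$, the summability only gives $\eta_k\to 0$, not $g^{k,i_k}\to 0$. So property~\eqref{eq:Def_G1} is inapplicable to $g^{k,i_k}$, and your route is blocked.

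The missing idea is to exploit the \emph{failed} line-search steps rather than the successful one. From $\eta_k\to 0$ and $\eta_k\ge\underline\epsilon\,2^{-i_k}$ you get $i_k\to\infty$. For each fixed index $i$, once $i_k>i$ the inner loop records that descent failed at that $i$ for stepsize $\underline\epsilon\,2^{-i}$:
\[
  f\big(x^k-\underline\epsilon\,2^{-i}\,g^{k,i}\big) > f(x^k)-\alpha\,\underline\epsilon\,2^{-i}\|g^{k,i}\|^2.
\]
Using local boundedness of $G(\cdot,\underline\epsilon\,2^{-i})$, pass to a subsequential limit $\bar d^{\,i}\in\limsup_{x\to\bx}G(x,\underline\epsilon\,2^{-i})$ to get
\[
  f\big(\bx-\underline\epsilon\,2^{-i}\,\bar d^{\,i}\big)\ge f(\bx)-\alpha\,\underline\epsilon\,2^{-i}\|\bar d^{\,i}\|^2
  \quad\text{for every }i\in\N.
\]
This contradicts Proposition~\ref{prop:descent_direc}, whose proof relies on property~\eqref{eq:Def_G2}. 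In short, Case~1 is closed by \eqref{eq:Def_G2} via Proposition~\ref{prop:descent_direc}, not by \eqref{eq:Def_G1}; your proposal never invokes \eqref{eq:Def_G2} and so cannot close this case.
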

\begin{proof}
    First, consider the case that the inner loop does not terminate for some $k \in \N$. By Proposition \ref{prop:inner_loop}, $0 \in \partial f(x^k)$, so (a) holds. Now suppose the inner loop terminates for each $k$. By Proposition \ref{prop:inner_loop}, there exists an index $i_{k}$ for each $k$ such that the descent inequality holds:
    \[
        f(x^{k+1}) = f(x^k- \eta_{k}\, g^{k, i_k}) \leq f(x^k) - \alpha \, \eta_{k}
        \|g^{k, i_k}\|^{2}.
    \]
    Since $f$ is bounded from below, summing the descent inequality over $k$ gives
    \begin{equation}
    \label{eq:finite_sum1}
        \sum^{+\infty}_{k=0}\eta_{k}\|g^{k, i_k}\|^{2} = \sum^{+\infty}_{k=0}\|x^{k+1}- x^k\| \cdot \|g^{k, i_k}\| < +\infty .
    \end{equation}

    \noindent
    {\bf Case 1:} Suppose that there exist $\underline k \in \mathbb{N}$, $\underline\nu > 0$ and $\underline\epsilon > 0$ such that $\nu_{k}= \underline\nu$ and $\epsilon_{k,0}= \underline\epsilon$ for all $k \geq \underline k$. Then, $\|g^{k, i_k}\| > \underline\nu$ for all $k \geq \underline k$. By \eqref{eq:finite_sum1}, it follows that $\eta_{k}\to 0$ and $\sum^{+\infty}_{k=0} \|x^{k+1}- x^k\| < +\infty$. Since $\eta_{k}\geq \epsilon_{k,0} \, 2^{-i_k} \geq \underline\epsilon \, 2^{-i_k}$, the condition $\eta_{k} \to 0$ implies $i_{k}\to +\infty$. Additionally, from $\sum^{+\infty}_{k=0} \|x^{k+1}- x^k\| < +\infty$, we deduce that $\{x^k\}$ converges to some point $\bx$. If $0 \in \partial f(\bx)$, the proof is done; therefore, assume that $0 \notin \partial f(\bx)$.

    For each fixed index $i \in \N$, since $i_{k}\to +\infty$, there exists $k_{i}> \underline k$ such that $i \in \{0, \cdots, i_{k}\}$ for all $k \geq k_{i}$. By definition of $G$, the set-valued map $G(\cdot, \underline\epsilon \, 2^{-i})$ is locally bounded at $\bx$, so $\{g^{k, i}\}_{k \geq k_i}$ is a bounded sequence. Let $\{g^{k, i}\}_{k \in N_i}$ be a subsequence convergent to some point $\bar d^{\,i}$. Thus,
    \begin{equation}\label{eq:d_i}
        \bar d^{\,i}\in \limsup_{k \to +\infty}G(x^k, \epsilon_{k,i}) = \limsup_{k \to +\infty}G(x^k, \underline\epsilon  \, 2^{-i}) 
        \qquad\text{for all }i \in \N.
    \end{equation}
    For each $k$ with $i_k \geq 1$, due to the termination criterion of the inner loop, the descent inequality fails up to index $i = i_{k}$, i.e., for all $i \in \{0, \cdots, i_{k}-1\}$,
    \begin{equation}\label{eq:failure_descent}
        f\big(x^k- \epsilon_{k,0} \, 2^{-j} \cdot g^{k, i}\big) > f(x^k) - \epsilon_{k,0}\, 2^{-j} \cdot \alpha \|g^{k, i}\|^{2}
        \qquad \text{for all }j \in \{0, \cdots, i\}.
    \end{equation}
    By choosing $j=i$, passing to the limit as $k(\in N_i) \to +\infty$ in \eqref{eq:failure_descent}, and using the continuity of $f$, we obtain
    \begin{equation}\label{eq:lim_k_failure_descent}
        f\left(\bx - \underline\epsilon \, 2^{-i} \cdot \bar d^{\,i}\right) \geq f(\bx) - \underline\epsilon \, 2^{-i} \cdot \alpha \big\|\bar d^{\,i}\big \|^{2}
        \qquad \text{for all }i \in \mathbb{N}.
    \end{equation}
    Combining \eqref{eq:lim_k_failure_descent} and \eqref{eq:d_i}, we conclude that the descent inequality \eqref{eq:descent_ineq} fails to hold with $\epsilon = \eta = \underline\epsilon \, 2^{-i}$ for all $i \in \N$. This contradicts Proposition \ref{prop:descent_direc}, which guarantees the existence of descent directions for sufficiently small $\epsilon$ and $\eta$. Consequently, in Case 1, $\{x^k\}$ converges to a point $\bar{x}$ satisfying $0 \in \partial f(\bar{x})$.

    \vspace{0.1in}
    \noindent
    {\bf Case 2:} Otherwise, the reduction in line 13 of Algorithm \ref{alg1} occurs infinitely many times, and we have $(\nu_{k}, \epsilon_{k,0}) \downarrow 0$. Let $\bx$ be an accumulation point of $\{x^k\}$. Since $\{x^k\}$ and $\{g^{k, i_k}\}$ satisfy the descent inequality
    \[
        f(x^{k+1}) \leq f(x^k) - \alpha \, \eta_{k} \|g^{k, i_k}\|^{2}= f(x^k
        ) - \alpha \|x^{k+1}- x^k\| \cdot \|g^{k, i_k}\|,
    \]
    we can apply Lemma \ref{lem:consequence_of_descent} to obtain that either $x^k \to \bx$, or $\liminf_{k \to +\infty}\, \max\{\|x^k- \bx\|, \|g^{k, i_k}\|\} = 0$. 
    Since $\nu_k \downarrow 0$, it always holds that $\liminf_{k \to +\infty} \|g^{k,i_k}\| = 0$. Therefore, in either scenario, we can find an index set $N \in \N^{\sharp}_{\infty}$ such that $\max\big\{\|x^k- \bx\|, \|g^{k, i_k}\|\big\} \to_{N} 0$. 
    As $\epsilon_{k, i_k}\leq \epsilon_{k,0}\downarrow 0$, by property \eqref{eq:Def_G1}, we have
    \[
        0 = \lim_{k(\in N) \to +\infty}g^{k, i_k}\,\in \left[\,\limsup_{k
        \to +\infty}G(x^k, \epsilon_{k,i_k})\right] \subset \partial f(\bx).
    \]
    Hence, any accumulation point $\bx$ of $\{x^k\}$ is a stationary point.
\end{proof}

\section{Descent Directions via Subgradient Regularization}
\label{sec4:subgrad-regularization}

This section introduces the technique of subgradient regularization for constructing descent-oriented subdifferentials. In contrast to the Goldstein-type and bundle-type constructions discussed in Section \ref{sec2.2:Enlarged_subdiff}, which rely on enlarged subdifferentials without exploring specific structures, we now focus on a subclass of nonsmooth functions known as marginal functions. For simplicity, we restrict attention to maximal marginal functions. Analogous arguments apply when the maximum is replaced by a minimum, i.e., when the maximal value function $f$ is replaced by $(-f)$ (see Remark \ref{rmk:G_scaling} following the definition of descent-oriented subdifferentials).

\subsection{Subgradient regularization for marginal functions with fixed feasible sets}
\label{sec4.1:subgrad-regularized_marginal}

We start by analyzing the case where the feasible set of the marginal function is fixed, i.e., independent of $x$. Specifically, we consider the marginal function
\begin{equation}\label{eq:P}
    f(x) \triangleq \max_{y \in Y \subset \R^{m}}\; \varphi(x,y),
    \tag{P}
\end{equation}
where  $Y$ is convex and compact; $\varphi (\cdot, y)$ is differentiable for every fixed $y \in Y$; both $\varphi (\cdot, \cdot)$ and $\nabla_{x}\varphi (\cdot, \cdot)$ are jointly continuous; and $\varphi(x, \cdot)$ is concave for every fixed $x$. Under these assumptions, $f$ is locally Lipschitz continuous~\cite[Theorem 10.31]{rockafellar2009variational}.

Let $Y^{\ast}(x)$ denote the solution set of the inner maximization problem for a given $x$, which must be convex. By Danskin's theorem~\cite[Theorem 2.1]{clarke1975generalized}, we have
\begin{equation}\label{eq:Clarke_subdiff}
    \partial f(x) = \conv\big(S(x)\big) 
    \quad\text{with}\quad S(x) \triangleq \bigcup \big\{\nabla_{x}\varphi(x, y) \mid y \in Y^{\ast}(x)\big\}.
\end{equation}
The convex hull is redundant when $\nabla_{x}\varphi(x, y)$ is affine in $y$.
For any fixed $x$ and $\epsilon > 0$, we define the subgradient-regularized problem associated with \eqref{eq:P} as:
\begin{equation}\label{eq:P_regularized}
    Y^{\epsilon}(x) \triangleq \argmax_{y \in Y} \left\{
        \varphi (x,y) - \frac{\epsilon}{2}\|\nabla_{x}\varphi (x,y)\|^{2}\right
    \}.
    \tag{P$^\epsilon$}
\end{equation}
Throughout this subsection, we consider a set-valued map $G: \R^n \times (0, +\infty) \rightrightarrows \R^n$ defined by
\begin{equation}\label{eq:G_max_marginal}
    G(x, \epsilon) \triangleq \bigcup \big\{\nabla_{x}\varphi (x, y) \mid y \in Y^{\epsilon}(x)\big\}.
\end{equation}

We note that  when $\varphi(x, y) = y^\top A x - h(y)$ for a convex function $h$ and a matrix $A \in \R^{m \times n}$, the above construction of $G(x,\epsilon)$ is different from the well-known smoothing technique by Nesterov~\cite{nesterov2005smooth}, where  the smoothed approximation of $f$ in \eqref{eq:P} is defined as $f_{\epsilon}(x) \triangleq \max_{y \in Y}\{\varphi(x, y) - \epsilon d(y)\}$ for some strongly convex function $d$. At a given $x$, the optimal solution of $y$ is always unique and $f_\epsilon$ is continuously differentiable. In contrast, $Y^\epsilon(x)$ in \eqref{eq:P_regularized} may not be a singleton, but $G$ is always a descent-oriented subdifferential for $f$ in this case, as shown in the following lemma. The proof can be found in Appendix A.1.

\begin{lemma}[Subgradient regularization defines a descent-oriented subdifferential]
\label{lem:G_max_marginal}
    Consider problem \eqref{eq:P}, and let $Y^\epsilon(x)$ and $G(x, \epsilon)$ be defined as in \eqref{eq:P_regularized} and \eqref{eq:G_max_marginal}, respectively. Let $y^{\epsilon}: \R^n \to \R^m$ be a single-valued map satisfying $y^{\epsilon}(x) \in Y^{\epsilon}(x)$ for all $(x, \epsilon) \in \R^{n}\times (0, +\infty)$. Then for any fixed $\bx \in \R^{n}$, the following properties hold:\\[0.03in]
    (a) $\displaystyle\lim_{\epsilon \downarrow 0, x \to \bx}\varphi (x, y^{\epsilon}(x)) = f(\bx)$.\\[0.03in]
    (b) $\displaystyle\limsup_{\epsilon \downarrow 0, x \to \bx}\nabla_{x}\varphi (x, y^{\epsilon} (x)) \subset \partial f(\bx)$. Consequently, the map $G$ satisfies \eqref{eq:Def_G1}.\\[0.03in]
    (c) For any  $\epsilon > 0$, $\limsup_{x \to \bx}\nabla_{x}\varphi(x, y^{\epsilon}(x)) \subset \bigcup\big\{\nabla_{x}\varphi (\bx, y) \mid y \in Y^{\epsilon}(\bx)\big\}$.
    Consequently, $G$ satisfies the first condition in \eqref{eq:Def_G2_sufficient}.\\[0.03in]
    (d) If $S(\bx)$ is a convex set, then $\lim_{\epsilon \downarrow 0}\nabla_{x}\varphi (\bx, y^{\epsilon}(\bx)) = \argmin_{v \in \partial f(\bx)} \|v\|$, and $G$ also satisfies the second condition in \eqref{eq:Def_G2_sufficient}. Thus, $G$ is a descent-oriented subdifferential for $f$.
\end{lemma}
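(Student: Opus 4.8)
The plan is to establish the four claims (a)--(d) of Lemma~\ref{lem:G_max_marginal} in order, each building on the previous one, and then to combine (b), (c), (d) to verify that $G$ is a descent-oriented subdifferential via the sufficient condition \eqref{eq:Def_G2_sufficient}. Throughout I will exploit that $Y$ is compact and $\varphi, \nabla_x \varphi$ are jointly continuous, so that along any sequence $x^k \to \bx$ and $\epsilon_k \downarrow 0$ (or $\epsilon_k \to \epsilon > 0$) I can extract a convergent subsequence $y^{\epsilon_k}(x^k) \to \by \in Y$; the entire argument is a sequence of such extraction-and-limit steps.

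\emph{Claim (a).} First I would write the defining inequality of $y^\epsilon(x) \in Y^\epsilon(x)$: for every $y \in Y$,
\[
    \varphi(x, y^\epsilon(x)) - \tfrac{\epsilon}{2}\|\nabla_x \varphi(x, y^\epsilon(x))\|^2 \;\geq\; \varphi(x, y) - \tfrac{\epsilon}{2}\|\nabla_x \varphi(x, y)\|^2.
\]
Taking $y$ to be a maximizer $\bar y \in Y^*(\bx)$ of $\varphi(\bx, \cdot)$, and using joint continuity together with the uniform boundedness of $\|\nabla_x \varphi\|$ on a neighborhood of $\bx$ times $Y$ (a compact set), the regularization terms are $O(\epsilon)$ and vanish in the limit; a standard $\limsup$/$\liminf$ sandwich gives $\varphi(x, y^\epsilon(x)) \to f(\bx)$. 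For the lower bound I use $\varphi(x, y^\epsilon(x)) \geq \varphi(x, \bar y) - \tfrac{\epsilon}{2}\|\nabla_x\varphi(x,\bar y)\|^2 \to f(\bx)$, and for the upper bound simply $\varphi(x, y^\epsilon(x)) \leq f(x) \to f(\bx)$ by continuity of the marginal function.

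\emph{Claims (b) and (c).} For (b), take $\epsilon_k \downarrow 0$, $x^k \to \bx$ with $\nabla_x \varphi(x^k, y^{\epsilon_k}(x^k)) \to \bar v$; extract $y^{\epsilon_k}(x^k) \to \by \in Y$. By joint continuity, $\bar v = \nabla_x \varphi(\bx, \by)$, and by (a), $\varphi(\bx, \by) = f(\bx)$, so $\by \in Y^*(\bx)$, whence $\bar v \in S(\bx) \subset \partial f(\bx)$ by \eqref{eq:Clarke_subdiff}. This is exactly \eqref{eq:Def_G1} for $G$. Claim (c) is the analogous argument with $\epsilon$ fixed: take $x^k \to \bx$, extract $y^\epsilon(x^k) \to \by$; I must show $\by \in Y^\epsilon(\bx)$. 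This follows by passing to the limit in the optimality inequality displayed above (with $x = x^k$, arbitrary competitor $y \in Y$) using joint continuity of both $\varphi$ and $\nabla_x \varphi$ — every term converges, yielding the defining inequality for $\by \in Y^\epsilon(\bx)$. Then $\nabla_x \varphi(\bx, \by) \in \bigcup\{\nabla_x\varphi(\bx, y)\mid y \in Y^\epsilon(\bx)\}$, giving the first half of \eqref{eq:Def_G2_sufficient}. Here I should also note local boundedness and closed-valuedness of $G(\cdot,\epsilon)$: boundedness is immediate from continuity of $\nabla_x\varphi$ on compacta, and closedness of the image follows from compactness of $Y^\epsilon(x)$ (itself closed as the argmax of a continuous function over the compact $Y$) together with continuity of $\nabla_x\varphi(x, \cdot)$.

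\emph{Claim (d) — the main obstacle.} This is the delicate part, since I must identify the limit of $\nabla_x\varphi(\bx, y^\epsilon(\bx))$ as $\epsilon \downarrow 0$ precisely as the minimal-norm element of $\partial f(\bx)$, and this is where convexity of $S(\bx)$ enters. Fix $\epsilon > 0$ and abbreviate $v^\epsilon := \nabla_x\varphi(\bx, y^\epsilon(\bx)) \in S(\bx)$. The strategy is a variational comparison: since $Y^\epsilon(\bx)$ maximizes $\varphi(\bx, \cdot) - \tfrac{\epsilon}{2}\|\nabla_x\varphi(\bx,\cdot)\|^2$ over $Y$, and any $\by^* \in Y^*(\bx)$ has $\varphi(\bx, \by^*) = f(\bx) \geq \varphi(\bx, y^\epsilon(\bx))$, comparing the two objective values gives
\[
    \tfrac{\epsilon}{2}\|v^\epsilon\|^2 \;\leq\; \tfrac{\epsilon}{2}\|\nabla_x\varphi(\bx, \by^*)\|^2 + \big(\varphi(\bx, y^\epsilon(\bx)) - f(\bx)\big) \;\leq\; \tfrac{\epsilon}{2}\|\nabla_x\varphi(\bx, \by^*)\|^2,
\]
using that $y^\epsilon(\bx)$ is suboptimal for the unregularized problem. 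Hence $\|v^\epsilon\| \leq \|\nabla_x\varphi(\bx, \by^*)\|$ for every $\by^* \in Y^*(\bx)$; in particular, since (when $S(\bx)$ is convex) $\partial f(\bx) = S(\bx) = \{\nabla_x\varphi(\bx, y)\mid y \in Y^*(\bx)\}$, we get $\limsup_{\epsilon\downarrow 0}\|v^\epsilon\| \leq \min_{v \in \partial f(\bx)}\|v\|$. It remains to show every accumulation point of $\{v^\epsilon\}$ lies in $\partial f(\bx)$: along $\epsilon_k \downarrow 0$ with $y^{\epsilon_k}(\bx) \to \by$, claim (a) gives $\varphi(\bx, \by) = f(\bx)$ so $\by \in Y^*(\bx)$, hence the accumulation point $\nabla_x\varphi(\bx, \by) \in S(\bx) = \partial f(\bx)$. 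Combined with the norm bound and the uniqueness of the minimal-norm element of the convex set $\partial f(\bx)$, every accumulation point equals $\argmin_{v\in\partial f(\bx)}\|v\|$, so the whole sequence converges to it. This gives the second half of \eqref{eq:Def_G2_sufficient}; together with (b) and (c) and the osc-plus-locally-bounded structure, \eqref{eq:Def_G2_sufficient} holds, so $G$ is a descent-oriented subdifferential for $f$. The one subtlety to watch is that convexity of $S(\bx)$ is used twice — to identify $\partial f(\bx)$ with $S(\bx)$ (so the norm comparison is against the right set) and to guarantee the minimal-norm element is unique — and without it only the weaker inclusion-type properties (b), (c) survive.
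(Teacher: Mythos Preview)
Your proposal is correct and follows essentially the same route as the paper, which isolates the two key inequalities (the objective sandwich you display in (a) and the norm bound $\|v^\epsilon\| \leq \inf_{y \in Y^*(\bx)}\|\nabla_x\varphi(\bx,y)\|$ you derive in (d)) into a preliminary lemma and then proves (a)--(d) by the same compactness-and-extraction arguments you outline. One small slip to fix: you write $v^\epsilon = \nabla_x\varphi(\bx, y^\epsilon(\bx)) \in S(\bx)$, but $y^\epsilon(\bx)$ need not lie in $Y^*(\bx)$, so $v^\epsilon \notin S(\bx)$ in general --- fortunately your subsequent argument never uses this membership, relying only on the norm comparison and on the fact that \emph{accumulation points} of $\{v^\epsilon\}$ land in $S(\bx)$.
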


It can be seen from the above lemma that when $\nabla_{x}\varphi (x, y)$ is affine in $y$ for any fixed $x$, the set $S(x)$ is convex for any $x$, and the mapping $G$ defined in \eqref{eq:G_max_marginal} is a descent-oriented subdifferential. In this case, the subgradient-regularized problem \eqref{eq:P_regularized} reduces to a concave maximization over a convex compact set and can  be solved to optimal in principle.
We also highlight two useful properties in this setting. Part (a) shows that, although $Y^{\epsilon}(x)$ may not be a singleton, the gradient $\nabla_{x}\varphi (x, y)$ remains invariant over $y \in Y^{\epsilon}(x)$.

\begin{lemma}[A special case of affine dependence in $y$]
\label{lem:subgrad-regularized_Affine_in_y} 
    Consider problem \eqref{eq:P} with $\varphi (x, y) = g(x) + y^{\top}\psi(x) - h(y)$, where $g: \R^{n}\to \R$ and $\psi: \R^{n}\to \R^{m}$ are continuously differentiable maps, and $h: \R^{m} \to \R$ is convex. Let $G$ be defined as in \eqref{eq:G_max_marginal}. Then the following properties hold:\\
    (a) For any $(x, \epsilon) \in \R^n \times (0, +\infty)$, the set $G(x, \epsilon)$ is a singleton.\\
    (b) Let $\{x^k\} \subset \R^n$ and $\{\epsilon_{k}\} \subset (0, +\infty)$ be sequences such that $\max\{\|x^k - \bx\|, \|G(x^k, \epsilon_{k})\| \} \to_N 0$ for some point $\bx$ and some index set $N \in \N^{\sharp}_{\infty}$. Then $\bx$ is a stationary point.
\end{lemma}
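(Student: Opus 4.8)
I would prove (a) by a convexity argument along segments of maximizers of \eqref{eq:P_regularized}, and (b) by passing a first-order optimality condition to the limit, with (a) feeding into (b) since it guarantees $G(x^{k},\epsilon_{k})$ is a single vector. \emph{For (a):} fix $(x,\epsilon)$ and set $F(y)\triangleq\varphi(x,y)-\tfrac{\epsilon}{2}\|\nabla_{x}\varphi(x,y)\|^{2}$. Because the coupling term is $y^{\top}\psi(x)$, the map $y\mapsto\nabla_{x}\varphi(x,y)$ is affine; hence $F$ is concave and continuous, and $Y^{\epsilon}(x)=\argmax_{y\in Y}F(y)$ is nonempty, convex and compact. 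For any $y_{1},y_{2}\in Y^{\epsilon}(x)$, concavity forces $F$ to be constant on the segment $y_{t}=(1-t)y_{1}+ty_{2}$. Writing $a\triangleq\nabla_{x}\varphi(x,y_{1})$ and $b\triangleq\nabla_{x}\varphi(x,y_{2})-\nabla_{x}\varphi(x,y_{1})$, affineness gives $\nabla_{x}\varphi(x,y_{t})=a+tb$, and after absorbing all affine-in-$t$ terms the identity $F(y_{t})\equiv F(y_{1})$ becomes
\[
    h(y_{t})+\tfrac{\epsilon}{2}\,t^{2}\|b\|^{2}=\ell(t),\qquad t\in[0,1],
\]
for some affine $\ell$. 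Both summands on the left are convex in $t$, and a sum of convex functions equal to an affine function must have each summand affine; so $t\mapsto\tfrac{\epsilon}{2}t^{2}\|b\|^{2}$ is affine, forcing $b=0$, i.e.\ $\nabla_{x}\varphi(x,y_{1})=\nabla_{x}\varphi(x,y_{2})$. Thus $\nabla_{x}\varphi(x,\cdot)$ is constant on $Y^{\epsilon}(x)$ and $G(x,\epsilon)$ is a singleton.

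\emph{For (b):} by (a) write $G(x^{k},\epsilon_{k})=\{v^{k}\}$ with $v^{k}=\nabla_{x}\varphi(x^{k},y^{k})$ for some $y^{k}\in Y^{\epsilon_{k}}(x^{k})$; the hypothesis gives $x^{k}\to_{N}\bx$ and $v^{k}\to_{N}0$. Compactness of $Y$ lets us pass to a further subsequence with $y^{k}\to\bar y\in Y$, and joint continuity of $\nabla_{x}\varphi$ then yields $v^{k}\to\nabla_{x}\varphi(\bx,\bar y)$, so $\nabla_{x}\varphi(\bx,\bar y)=0$. It therefore suffices to show $\bar y\in Y^{\ast}(\bx)$, for then $0=\nabla_{x}\varphi(\bx,\bar y)\in S(\bx)\subset\partial f(\bx)$ by \eqref{eq:Clarke_subdiff}, which is the claim. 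To get $\bar y\in Y^{\ast}(\bx)$ I would use the first-order optimality condition for the concave maximization \eqref{eq:P_regularized} (the subdifferential sum rule applies since $h$ is finite-valued): there exist $\xi^{k}\in\partial h(y^{k})$ and $\zeta^{k}$ in the normal cone $N_{Y}(y^{k})$ with $\psi(x^{k})-\epsilon_{k}\nabla\psi(x^{k})\,v^{k}=\xi^{k}+\zeta^{k}$, where $\nabla\psi$ is the Jacobian of $\psi$. Using that $\{\epsilon_{k}\}$ stays bounded — which our algorithms guarantee, since $\epsilon$ is only ever decreased — together with $v^{k}\to0$ and continuity of $\nabla\psi$, the perturbation $\epsilon_{k}\nabla\psi(x^{k})v^{k}$ vanishes, so the left side tends to $\psi(\bx)$. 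Since $h$ is finite convex, $\partial h$ is locally bounded, so $\{\xi^{k}\}$ is bounded; along a subsequence $\xi^{k}\to\bar\xi\in\partial h(\bar y)$ by outer semicontinuity of $\partial h$, and then $\zeta^{k}\to\psi(\bx)-\bar\xi\in N_{Y}(\bar y)$ by outer semicontinuity of $N_{Y}$. Hence $\psi(\bx)\in\partial h(\bar y)+N_{Y}(\bar y)$, which is exactly the (necessary and sufficient) optimality condition for $\bar y$ to maximize $\varphi(\bx,\cdot)=g(\bx)+(\cdot)^{\top}\psi(\bx)-h(\cdot)$ over $Y$; thus $\bar y\in Y^{\ast}(\bx)$ and $\bx$ is stationary.

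The main obstacle I anticipate is the limiting step in (b): the multiplier $\psi(x^{k})-\epsilon_{k}\nabla\psi(x^{k})v^{k}$ must be split as $\xi^{k}+\zeta^{k}$ and each piece controlled separately, which is precisely where local boundedness of $\partial h$ and outer semicontinuity of $\partial h$ and $N_{Y}$ are used, and where boundedness of $\{\epsilon_{k}\}$ is essential to kill $\epsilon_{k}\nabla\psi(x^{k})v^{k}$. This last point genuinely cannot be dropped: already for $f(x)=|x|=\max_{y\in[-1,1]}xy$ one can drive $G(x,\epsilon)\to0$ at the non-stationary point $x=1$ by sending $\epsilon\to+\infty$, so the conclusion of (b) relies on the parameter regime maintained by our method. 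Part (a), by contrast, is the short self-contained piece.
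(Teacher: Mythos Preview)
Your argument for (a) is correct and is precisely the idea the paper alludes to (``strong convexity of $\|\cdot\|^{2}$ and linearity of $\nabla_{x}\varphi(x,\cdot)$''); the paper merely cites references, while you write it out. Your argument for (b) is essentially identical to the paper's: write the first-order optimality condition for $y^{k}\in Y^{\epsilon_{k}}(x^{k})$, extract a convergent subsequence $y^{k}\to\bar y$ by compactness of $Y$, use continuity of $\nabla_{x}\varphi$ to get $\nabla_{x}\varphi(\bar x,\bar y)=0$, and pass the inclusion to the limit via outer semicontinuity of $\partial h$ and $N_{Y}$ to conclude $\bar y\in Y^{\ast}(\bar x)$.

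Your remark about boundedness of $\{\epsilon_{k}\}$ is in fact sharper than the paper. The paper's proof silently drops the term $\epsilon_{k}\nabla\psi(x^{k})^{\top}\nabla_{x}\varphi(x^{k},y^{k})$ when taking the limit, which is only justified if $\epsilon_{k}\nabla_{x}\varphi(x^{k},y^{k})\to 0$; your counterexample $f(x)=|x|$ with $x^{k}\equiv 1$ and $\epsilon_{k}\to+\infty$ shows the lemma as stated (with no bound on $\epsilon_{k}$) is false. Since every application of the lemma in the paper has $\epsilon_{k}$ bounded above by $\epsilon_{0,0}$, this is harmless for the paper's results, but you are right to flag it as an implicit hypothesis.
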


\begin{remark}
    Part (b) implies that, under the specific structure of $\varphi$, if an algorithm produces a sequence $\{x^k\}$ with an accumulation point $\bx$ satisfying the condition in (b), then $\bx$ is guaranteed to be a stationary point without the need for $\epsilon_k \to 0$. In contrast, the general convergence analysis of Algorithm~\ref{alg1} in Section~\ref{sec4:algorithm_vanilla} requires property (G1), which connects $G(x, \epsilon)$ to $\partial f(x)$ only asymptotically as $\epsilon \downarrow 0$.
\end{remark}

\begin{proof}
    (a) This result can be easily derived using the strong convexity of the squared function $\|\cdot\|^2$ and the linearity of $\nabla_x \varphi(x,y)$ in $y$. A detailed proof can be found in (e.g.,~\cite{tseng1991descent,luo1992linear}) and is omitted here for brevity. 
    (b) We let $\nabla \psi(x) \in \R^{n \times m}$ be the transpose of the Jacobian matrix of $\psi$ at $x$. For any $y^{k} \in Y^{\epsilon_k}(x^k)$, by the optimality condition for problem \eqref{eq:P_regularized}, we have
    \begin{equation}\label{eq:optimality_y}
        0 \in -\psi(x^k) + \partial h(y^k) + \epsilon_{k}\nabla \psi(x^k)^\top \nabla_{x}\varphi (x^k, y^{k}) + \mathcal{N}_{Y} (y^{k}).
    \end{equation}
    where $\mathcal{N}_Y(\by) \triangleq \{v \in \R^m \mid v^\top (y - \by) \leq 0 \text{ for all }y \in Y\}$ is the normal cone of $Y$ at a point $\by \in Y$. Since $Y$ is compact, by further passing to a subsequence if necessary, we assume that $y^{k} \to_{N} \bar y \in Y$. It follows from our assumption that $\|G(x^k, \epsilon_k)\| = \|\nabla_{x}\varphi (x^k, y^{k})\| \to_{N} 0$. By the continuity of $\nabla_x \varphi(\cdot, \cdot)$, we also have $\nabla_x \varphi(\bx, \by) = 0$. Taking the outer limit in \eqref{eq:optimality_y} as $k \to +\infty$, and using the continuity of $\psi(\cdot), \nabla \psi(\cdot)$, along with the outer semicontinuity of $\partial h(\cdot)$ and $\mathcal{N}_{Y}(\cdot)$ (see~\cite[Propositions 6.6 and 8.7]{rockafellar2009variational}), we obtain $0 \in [-\psi(\bx) + \partial h(\by)+ \mathcal{N}_{Y}(\by)] = \partial_{y}(-\varphi) (\bx, \bar y) + \mathcal{N}_{Y}(\by)$.
    This implies that $\by \in Y^{\ast}(\bx)$. Since $\nabla_x \varphi(\bx, \by) = 0$, we conclude that
    \[
        0 = \nabla_{x}\varphi (\bx, \by) 
        \in \Big[\bigcup\big\{\nabla_{x}\varphi (\bx, y) \mid y \in Y^\ast(\bx)\big\}\Big] 
        \subset \partial f(\bx),
    \]
    and hence $\bx$ is a stationary point.
\end{proof}

To illustrate the broad applicability of the subgradient-regularization technique, we next present four examples that fall within the formulation of \eqref{eq:P} and satisfy the structural assumption in Lemma \ref{lem:subgrad-regularized_Affine_in_y}.

\begin{example}[Finite max of smooth functions]
\label{ex:max-of-smooth}
    Let $\{f_{i}\}_{i=1}^m$ be continuously differentiable. Then
    \begin{equation}\label{eq:Max_of_smooth}
        f(x) = \max_{1 \leq i \leq m}f_{i}(x) = \max_{y \in \Delta^{m}}\sum^{m}_{i=1}y_{i}f_{i}(x),
    \end{equation}
    satisfies the assumption in Lemma \ref{lem:subgrad-regularized_Affine_in_y}.
\end{example}

\begin{example}[Maximal eigenvalue functions]
\label{ex:eigen-value}
    Consider the  function
    \[
        f(x) = \lambda_{\max}(\mathcal{A}(x)) ,
    \]
    where $\mathcal{A}: \R^n \to \mathcal{S}_m$ is an affine map of the form $\mathcal{A}(x) = A_{0}+ \mathcal{F}x$ with $\mathcal{F}x = \sum^n_{i=1} x_i A_i$ defined by matrices $A_{i} \in \mathcal{S}_{m}$ for $i=0,1,\cdots,n$. Here, $\lambda_{\max}(X)$ denotes the largest eigenvalue of the matrix $X \in \mathcal{S}_{m}$. The function $\lambda_{\max}(\cdot)$ is convex and can be expressed as the support function of the \textsl{spectrahedron} set $\mathcal{C}_{m}\triangleq \{Q \in \mathcal{S}_{m}\mid Q \succeq 0, \Tr(Q ) = 1\}$, which is compact and convex. Using Rayleigh's variational formulation, we can rewrite the objective as a marginal function:
    \[
        \lambda_{\max}(\psi(x)) = \max_{u \in \R^m, \|u\| = 1}\, u^{\top}\mathcal{A}(x) u \;=\; \max_{y \in \mathcal{C}_m}\, \langle y, \mathcal{A}(x) \rangle .
    \]
\end{example}

\begin{example}[Distributionally robust optimization]
\label{ex:DRO}
    Let $\{s_i\}^{N}_{i=1}$ be observed data and $\ell(\cdot\,; s_i)$ be the corresponding loss. 
    The distributionally robust formulation of the training problem   based on $\phi$-divergence~\cite{levy2020large} takes the form of
    \[
        f(x) = \max_{\left\{p \in \Delta^{N} \,\middle|\,  \sum^{N}_{i=1} \phi(N p_i) / N \,\leq\, \rho\right\}} \sum^{N}_{i=1} \left(p_{i} \, \ell(x; s_i) - \frac{1}{N} \psi(N p_i) \right),
        \qquad\rho > 0,
    \]
    where $\phi, \psi: [0, +\infty) \to \R \cup \{+\infty\}$ are convex functions satisfying $\phi(1) = \psi(1) = 0$. If $\ell(\cdot\,; s)$ is continuously differentiable, the assumptions required in Lemma \ref{lem:subgrad-regularized_Affine_in_y} hold.
\end{example}

\begin{example}[Composition of a convex function with a smooth map]
\label{ex:cvx_composite}
    Consider the composite function $f(x) = h(c(x))$, where $c: \R^{n}\to \R^{m}$ is $C^{1}$-smooth and $h: \R^{m}\to \R$ is a convex function. By Fenchel duality, we write $f$ as $f(x) = \max_{y \in \R^m}\left\{y^{\top}c(x) - h^{\ast}(y)\right\}$.
    For any fixed $x$, we denote by $\nabla c(x) \in \R^{n \times m}$ the transpose of the Jacobian matrix of $c$ at $x$. Then $\partial f(x) = \bigcup\left\{\nabla c(x) \, y \,\middle|\, y \in Y^{\ast}(x)\right\}$ where $Y^{\ast}(x)$ denotes the solution set of the above maximization problem. We construct a descent-oriented subdifferential via subgradient regularization:
    \begin{equation}\label{eq:G_cvx-composite}
        G(x, \epsilon) = \bigcup\big\{\nabla c(x) \, y \mid y \in Y^{\epsilon}(x) \big\},
    \end{equation}
    where
    \[
        Y^{\epsilon}(x) = \argmax_{y \in \dom(h^\ast)}\left\{
            y^{\top} c(x) - h^{\ast}(x) -\frac{\epsilon}{2}\|\nabla c(x) \, y\|^{2}
        \right\}.
    \]
\end{example}

\subsection{Subgradient regularization for marginal functions with varying feasible sets}
\label{sec4.2:marginal_vary}

We turn to the case in which the feasible set of the marginal function also depends on $x$. Consider the following marginal function:
\begin{align*}
    f(x) 
    &\displaystyle\triangleq \max_{y \in \R^m}\Big\{ \varphi_0 (x,y), 
    \quad\mbox{subject to } \varphi_j(x,y) \leq 0,\; j=1,\cdots,r \Big\}
    \tag{P$_x$} \label{eq:P_x} \\
    &\displaystyle= \max_{y \in \R^m} \min_{\lambda \geq 0} \Bigg[\mathcal{L}(x, y; \lambda) \triangleq \varphi_0(x, y) - \sum^{r}_{j=1}\lambda_j \, \varphi_j(x,y) \Bigg].
\end{align*}
Here, all functions $\{\varphi_j(x, y)\}^{r}_{j=0}$ are continuously differentiable.
Given $(x,y)$, we define the index set of active constraints as $J(x, y) \triangleq \{j \in \{1,\cdots, r\} \mid \varphi_j(x,y) = 0\}$. For the inner maximization in \eqref{eq:P_x}, denote the solution set by $Y^{\ast}(x)$.
We summarize basic assumptions below:
\begin{itemize}
    \item[(i)] For every $x$, the function $\varphi_0(x, \cdot)$ is concave, and $\{\varphi_j(x, \cdot)\}^{r}_{j=1}$ are convex.
    \item[(ii)] For every $x$, the feasible set $Y(x) \triangleq \{y \in \R^m \mid \varphi_j(x,y) \leq 0, j=1, \cdots, r\}$ is nonempty, and the mapping $x \mapsto Y(x)$ is locally bounded.
    \item[(iii)] The Mangasarian-Fromovitz constraint qualification (MFCQ) holds at every $x$: 
    For any $y \in Y^\ast(x)$, there exists $w \in \mathbb{R}^{m}$ such that $\nabla_y\, \varphi_{j} (x, y)^\top w < 0$ for all $j \in J(x, y)$.
\end{itemize}
Under assumption (i), the set of Lagrange multipliers associated with the inner maximization in~\eqref{eq:P_x} is the same for all $y \in Y^\ast(x)$. Thus, we can denote this set as $\Lambda(x)$. Assumption (iii) ensures that $\Lambda(x)$ is nonempty and bounded for any fixed $x$~\cite[Proposition 5.47]{bonnans2013perturbation}. Furthermore, by~\cite[Theorem 5.1]{gauvin1982differential}, assumptions (ii) and (iii) imply the local Lipschitz continuity of $f$.

We now present two scenarios, each giving rise to a distinct form of subgradient regularization based on the characterization of $\partial f(x)$.

\vspace{0.1in}
\noindent{\bf Case 1:} 
We strengthen assumption (iii) by requiring uniqueness of multipliers. Specifically, we replace the MFCQ with:
\begin{itemize}
    \item[(iii$^\prime$)] The linear independence constraint qualification (LICQ) holds at every $x$: 
    For any $y \in Y^\ast(x)$, the gradients $\{\nabla_{y}\,\varphi_j(x,y)\}_{j \in J(x, y)}$ are linearly independent.
\end{itemize}
Under (iii$^\prime$),  it follows from~\cite[Corollary 5.4]{gauvin1982differential} that 
\[
    \partial f(x) = \conv \left(\bigcup \big\{ \nabla_{x}\mathcal{L}(x, y; \lambda(x)) \mid y \in Y^{\ast}(x) \big\} \right)
    \quad\text{for }\lambda(x) = \Lambda(x).
\]
If the convex hull operation is superfluous (for instance, when $\{\nabla_{x}\varphi_j(x, \cdot)\}_{j=0}^r$ are affine), and if an oracle returns the unique multiplier $\lambda(x)$ for each $x$, we consider the following subgradient-regularized problem:
\[
\begin{array}{rl}
    Y^{\epsilon}(x) \triangleq 
    &\;\,\displaystyle\argmax_{y \in \R^m} \;\;\left\{ \varphi_0(x,y) - \frac{\epsilon}{2} \big\|\nabla_{x}\mathcal{L} (x,y;\lambda(x)) \big\|^{2}\right\} \\ 
    &\mbox{subject to }\; \varphi_j(x,y) \leq 0,\quad j=1,\cdots,r. 
\end{array}
\]
Correspondingly, we define the associated descent-oriented subdifferential as
\begin{equation}\label{eq:G_LICQ}
    G(x, \epsilon) \triangleq \bigcup\big\{\nabla_{x} \mathcal{L} (x,y; \lambda(x)) \mid y \in Y^{\epsilon}(x)\big\}.
\end{equation}

\vspace{0.1in}
\noindent{\bf Case 2:} 
Instead of requiring the uniqueness of multipliers as in Case 1, we ensure that $Y^\ast(x)$ is a singleton and the map $x \mapsto Y^\ast(x)$ is locally Lipschitz continuous. For simplicity, suppose additionally that $\varphi(x,\cdot)$ is strongly concave for every fixed $x$. This assumption can be relaxed to strong second-order sufficient optimality conditions (see~\cite[(5.120)]{bonnans2013perturbation}). Under this condition, by~\cite[Corollary 4]{mordukhovich2009subgradients}, we have
\[
    \partial f(x) = \bigcup \big\{\nabla_{x}\mathcal{L} (x, y(x); \lambda) \mid \lambda \in \Lambda(x)\big\}
    \quad\text{for } y(x) = Y^{\ast}(x).
\]
If an oracle returns the unique optimal solution $y(x) = Y^{\ast}(x)$ for each $x$, we define the following subgradient-regularized problem:
\[
\begin{array}{rl}
    \Lambda^{\epsilon}(x) \triangleq
    &\;\,\displaystyle\argmin_{\lambda \geq 0} \;\;\,\left\{ \mathcal{L}(x,y(x); \lambda) + \frac{\epsilon}{2} \big\|\nabla_{x}\mathcal{L}(x,y(x);\lambda) \big\|^{2}\right\} \\
    &\mbox{subject to } \;\nabla_y \mathcal{L}(x, y(x); \lambda) = 0  .
\end{array}
\]
We then construct the associated descent-oriented subdifferential as
\begin{equation}\label{eq:G_MFCQ}
    G(x, \epsilon) \triangleq \bigcup \big\{\nabla_{x}\mathcal{L} (x,y(x);\lambda) \mid \lambda \in \Lambda^\epsilon(x)\big\}.
\end{equation}

We conclude this section by presenting the main results for the two scenarios described above. The proof follows similar arguments as those employed in Lemma \ref{lem:G_max_marginal}; for completeness, we provide a concise outline of the proof in Appendix A.2.

\begin{lemma}[Subgradient regularization defines a descent-oriented subdifferential]
\label{lem:G_opt_val}
    Consider problem \eqref{eq:P_x} under assumptions (i)-(iii).\\
    (a) If assumption (iii$^\prime$) holds, and $\{\nabla_x \,\varphi_j(x, y)\}^{r}_{j=0}$ are affine in $y$ for any fixed $x$, then the mapping $G$ defined by \eqref{eq:G_LICQ} is a descent-oriented subdifferential for $f$.\\
    (b) If $\varphi_0(x, \cdot)$ is strongly concave for any fixed $x$, then the mapping $G$ defined by \eqref{eq:G_MFCQ} is a descent-oriented subdifferential for $f$.
\end{lemma}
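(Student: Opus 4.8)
\textbf{Proof proposal for Lemma \ref{lem:G_opt_val}.}
The plan is to mirror the structure of the proof of Lemma \ref{lem:G_max_marginal} (carried out in Appendix A.1), adapting each step to account for the $x$-dependent feasible set and, in Case (a), the parametrizing multiplier $\lambda(x)$. Throughout, fix $\bx$ and write $\phi(x,y) = \varphi_0(x,y) - \tfrac{\epsilon}{2}\|\nabla_x \mathcal{L}(x,y;\lambda(x))\|^2$ for Case (a), resp.\ $\vartheta(x,\lambda) = \mathcal{L}(x,y(x);\lambda) + \tfrac{\epsilon}{2}\|\nabla_x\mathcal{L}(x,y(x);\lambda)\|^2$ for Case (b). The preliminary observations I would record first: under (i)--(iii), $f$ is locally Lipschitz, $\Lambda(\cdot)$ is nonempty, bounded, and osc, and $Y^\ast(\cdot)$ is nonempty and (in Case (b)) single-valued and locally Lipschitz; moreover the Clarke subdifferential formulas quoted from \cite{gauvin1982differential,mordukhovich2009subgradients} hold. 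I would also note that in Case (a) the LICQ implies $\lambda(\cdot)$ is continuous (it is the unique solution of a full-rank linear system depending continuously on the active gradients), and in Case (b) strong concavity plus (iii) gives continuity of $y(\cdot)$; these continuity facts are what make the regularized subproblems well posed with jointly continuous data.

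Next I would verify the three defining properties of a descent-oriented subdifferential in turn, exactly as in Lemma \ref{lem:G_max_marginal}. For \eqref{eq:Def_G1}: take $x^k \to \bx$, $\epsilon_k \downarrow 0$, and $g^k = \nabla_x\mathcal{L}(x^k,y^k;\lambda(x^k)) \to \bar g$ with $y^k \in Y^{\epsilon_k}(x^k)$ (Case (a)); by local boundedness of $Y(\cdot)$ pass to a subsequence with $y^k \to \bar y \in Y(\bx)$, and show via an envelope/optimality argument that the regularization penalty forces $\varphi_0(x^k,y^k) \to f(\bx)$ (compare a fixed optimizer $\tilde y \in Y^\ast(\bx)$, which is near-feasible for \eqref{eq:P_x} at $x^k$ after a small MFCQ-type correction, so $\phi$-value at $y^k$ is $\geq \phi$-value at the corrected $\tilde y$, and the penalty term is bounded), hence $\bar y \in Y^\ast(\bx)$; then $\bar g = \nabla_x\mathcal{L}(\bx,\bar y;\lambda(\bx)) \in \partial f(\bx)$ by the Gauvin formula. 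Case (b) is analogous with the roles of $y$ and $\lambda$ swapped, using osc of $\Lambda(\cdot)$ in place of local boundedness of $Y(\cdot)$. For the first half of \eqref{eq:Def_G2_sufficient} (osc of $G(\cdot,\epsilon)$ at fixed $\epsilon$): the same subsequence extraction, now keeping $\epsilon$ fixed, shows any limit of $\nabla_x\mathcal{L}(x^k,y^k;\lambda(x^k))$ has the form $\nabla_x\mathcal{L}(\bx,\bar y;\lambda(\bx))$ with $\bar y \in Y^\epsilon(\bx)$, using continuity of the subproblem data and stability of $\argmax$ under uniform convergence of strongly/strictly structured objectives over the (locally bounded, continuously varying) feasible sets. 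For the second half, $\lim_{\epsilon\downarrow 0} G(\bx,\epsilon) = \argmin_{v\in\partial f(\bx)}\|v\|$: show $\nabla_x\mathcal{L}(\bx,y^\epsilon(\bx);\lambda(\bx))$ is bounded as $\epsilon \downarrow 0$ (the penalty term is controlled by plugging in the minimal-norm-attaining $y$), every accumulation point lies in $S(\bx) := \bigcup\{\nabla_x\mathcal{L}(\bx,y;\lambda(\bx)) \mid y \in Y^\ast(\bx)\}$, and a Fermat/optimality comparison forces the norm to converge to $\min_{v \in S(\bx)}\|v\|$; since the affineness hypothesis in (a) makes $S(\bx)$ convex and equal to $\partial f(\bx)$, the limit is the projection of $0$ onto $\partial f(\bx)$. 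Case (b): $\partial f(\bx)$ is already $S(\bx) = \bigcup\{\nabla_x\mathcal{L}(\bx,y(\bx);\lambda)\mid \lambda\in\Lambda(\bx)\}$, which is convex because $\Lambda(\bx)$ is convex and $\lambda \mapsto \nabla_x\mathcal{L}(\bx,y(\bx);\lambda)$ is affine, so the same argument applies verbatim to the $\Lambda^\epsilon$-subproblem.

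The main obstacle I anticipate is the stability step hidden inside both \eqref{eq:Def_G1} and the osc half of \eqref{eq:Def_G2_sufficient}: showing that a limit $\bar y$ of near-optimizers of the regularized problem at $x^k$ is actually feasible \emph{and} optimal (resp.\ $\epsilon$-regularized-optimal) at $\bx$. Feasibility of $\bar y$ is immediate from continuity of the $\varphi_j$, but optimality requires an \emph{inner} approximation of the moving feasible set $Y(x^k)$ by points converging to a chosen comparator in $Y(\bx)$ — this is exactly where MFCQ (assumption (iii)) enters, via a Robinson-type/Hoffman argument producing feasible $y^k_{\mathrm{comp}} \to \tilde y$ with $\phi$-values converging correctly. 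In Case (a) there is the additional wrinkle that the objective itself moves through $\lambda(x^k)$, so I need continuity of $\lambda(\cdot)$ (from LICQ) to pass to the limit in the penalty term; in Case (b) the wrinkle is instead that $y(x^k)$ moves, handled by local Lipschitzness of $Y^\ast(\cdot)$. Everything else — local boundedness of $G(\cdot,\epsilon)$, closed-valuedness, the scaling remark for the min-version — is routine and follows as in the fixed-feasible-set case. I would therefore present the argument once in detail for Case (a) and indicate the symmetric modifications for Case (b), deferring the full write-up to Appendix A.2 as the excerpt already announces.
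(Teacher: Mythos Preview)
Your proposal is correct and follows essentially the same route as the paper's Appendix~A.2 proof: establish analogs of the sandwich estimates from Lemma~\ref{lem:bound_opt_val}, then verify \eqref{eq:Def_G1} and \eqref{eq:Def_G2_sufficient} via subsequence extraction, continuity of $\lambda(\cdot)$ (resp.\ $y(\cdot)$), and convexity of $S(\bx)$. One minor simplification the paper exploits for \eqref{eq:Def_G1}: rather than correcting a fixed $\tilde y \in Y^\ast(\bx)$ to feasibility at $x^k$ via MFCQ, it compares $y^{\epsilon_k}(x^k)$ directly against points of $Y^\ast(x^k)$ (automatically feasible at $x^k$) to obtain the per-$x$ bound $f(x^k) - \tfrac{\epsilon_k}{2}\inf_{v \in \partial f(x^k)}\|v\|^2 \le \varphi_0(x^k, y^{\epsilon_k}(x^k)) \le f(x^k)$, so the Robinson/inner-approximation machinery you anticipate is only needed for the osc-at-fixed-$\epsilon$ step --- and there the paper, like you, is brief.
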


\section{Adaptive Stepsizes and Reduction to the Prox-Linear Method}
\label{sec5:algorithm_adaptive}

The focus of last section was on the choice of descent directions.
In this section, we shift attention to stepsize control and refine Algorithm \ref{alg1} by further exploiting problem structure. We recall the nonsmooth composite problem in Example \ref{ex:cvx_composite}:
\begin{equation}\label{eq:cvx_composite}
    \Min_{x \in \R^n}\; f(x) = h (c(x)),
\end{equation}
where $c: \R^{n}\to \R^{m}$ is $C^{1}$-smooth and $h: \R^{m}\to \R$ is a convex function. Throughout this section, we also assume that $h$ is $L$-Lipschitz continuous with $\inf_y h(y) > -\infty$, and $\nabla c$ is $\beta$-Lipschitz continuous. For this problem, we will show in Proposition \ref{prop:equivalent_prox-linear} that the direction \eqref{eq:G_cvx-composite} obtained from subgradient regularization coincides with that of the widely studied prox-linear method. This connection motivates us to study a variant of Algorithm \ref{alg1}  that adaptively adjusts  the regularization parameter $\epsilon$ and stepsize $\eta$.

\subsection{A dual perspective of the prox-linear method}
\label{sec5.1:dual_prox-linear}

This section presents a dual perspective of the prox-linear method through the lens of subgradient regularization. To build intuition, we begin with a concrete example before generalizing in Proposition~\ref{prop:equivalent_prox-linear}. Consider the finite max of smooth functions $f(x) = \max_{1 \leq i \leq d}f_{i}(x)$ in Example~\ref{ex:max-of-smooth}, and recall that a descent-oriented subdifferential can be constructed via subgradient regularization. We now turn to the prox-linear method, which updates $x^{k+1}$ by solving
\[
    x^{k+1} = \argmin_{x \in \R^n} \left\{f(x; x^k) + \frac{1}{2 \epsilon}\|x - x^k\|^{2}\right\},
\]
where $f(x; x^k)$ is a piecewise linear approximation of $f$ at $x^k$:
\[
    f(x; x^k) 
    \triangleq \displaystyle\max_{1 \leq i \leq d} \left\{ f_i(x^k) + \nabla f_i(x^k)^\top (x - x^k) \right\}
    = \,\displaystyle\max_{y \in \Delta^{d}} \left\{\sum^d_{i=1} y_i \big(f_i(x^k) + \nabla f_i(x^k)^\top (x - x^k) \big) \right\}.
\]
This leads to the minimax formulation:
\[
    \min_{x \in \R^n} \; \max_{y \in \Delta^{d}}\left\{ \Phi(x,y) \triangleq
    \sum^{d}_{i=1}y_{i}\big(f_{i}(x^k) + \nabla f_{i}(x^k)^{\top}(x - x^k
    ) \big) + \frac{1}{2 \epsilon}\|x - x^k\|^{2}\right\}.
\]
By the minimax theorem~\cite[Propositions 1.2 and 2.4]{ekeland1999convex}, we may interchange the order of minimization and maximization. Completing the square reveals that the minimization problem attains its minimum at $x = x^k- \epsilon \sum^{d}_{i=1}y_{i}\nabla f_{i}(x^k)$. Substituting into the maximization problem gives:
\[
    \Max_{y \in \Delta^{d}}\left\{ \sum^{d}_{i=1} y_{i} \, f_{i}(x^k) - \frac{\epsilon}{2} \bigg\|\sum^{d}_{i=1}y_{i} \nabla f_{i}(x^k) \bigg\|^{2}\right\},
\]
which coincides with the subgradient-regularized problem \eqref{eq:P_regularized} for Example~\ref{ex:max-of-smooth}. Any solution $\bar y \in \Delta^{d}$ of this problem yields a saddle point $(x^{k+1}, \bar y)$ of $\Phi(x, y)$, and we recover the prox-linear update as
\[
    x^{k+1}= \argmin_{x \in \R^n}\; \Phi(x, \bar y) = x^k- \epsilon \, G(x^k, \epsilon) \quad\text{with}\quad G(x^k, \epsilon) \triangleq \sum^{d}_{i=1}\bar y_{i}\nabla f_{i}(x^k).
\]
Here, $G(x^k, \epsilon)$ is a descent-oriented subgradient for $f$, representing a convex combination of gradients $\{\nabla f_{i}(x^k)\}^{d}_{i=1}$. This example illustrates how the prox-linear method implicitly constructs descent directions through subgradient regularization, revealing its dual interpretation. 

\vspace{0.1in}
Next, we come back to the general composite problem \eqref{eq:cvx_composite}. A descent-oriented subdifferential has been defined in Example~\ref{ex:cvx_composite}. 
To define the prox-linear update, we consider an approximation of $f$ at $x$ by linearizing the inner smooth map $c$:
\[
    f(z; x) \triangleq h \big(c(x) + \nabla c(x)^\top (z - x) \big),
\]
which is convex  in $z$. It is known that $f(\cdot; x)$ is a \textsl{two-sided model}~\cite{drusvyatskiy2018error} satisfying
\begin{equation}\label{eq:two-sided-model}
    -\frac{L\beta}{2}\|z - x\|^{2} \leq f(z) - f(z;x) \leq \frac{L\beta}{2} \|z - x\|^{2}.
\end{equation}
This motivates the prox-linear method, which generates iterates via
\begin{equation}\label{eq:prox-linear}
    x^{k+1} = \argmin_{z \in \R^n}\left\{ f(z; x^k) + \frac{1}{2 \epsilon}\|z - x^k\|^{2}\right\}.
\tag{Prox-linear}
\end{equation}
Complementing the model-based interpretation in \eqref{eq:two-sided-model}, the following proposition offers a novel dual perspective by connecting it to subgradient regularization.

\begin{proposition}[Equivalence of prox-linear and subgradient regularization]
\label{prop:equivalent_prox-linear} 
    Consider problem \eqref{eq:cvx_composite}, and let $G$ be the descent-oriented subdifferential defined in \eqref{eq:G_cvx-composite} via subgradient regularization. Then \eqref{eq:prox-linear} with stepsize $\epsilon$ is equivalent to the update $x^{k+1} = x^k - \epsilon \, G(x^k, \epsilon)$.
\end{proposition}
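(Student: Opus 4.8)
The plan is to carry out, in full generality, the same saddle-point computation that was just performed for the finite-max example. Write out the prox-linear subproblem using the Fenchel-dual representation of $h$: since $f(z; x^k) = h\big(c(x^k) + \nabla c(x^k)^\top(z - x^k)\big)$ and $h(w) = \max_{y \in \dom h^\ast}\{y^\top w - h^\ast(y)\}$, the update \eqref{eq:prox-linear} becomes
\[
    x^{k+1} = \argmin_{z \in \R^n}\,\max_{y \in \dom h^\ast}\left\{ y^\top\big(c(x^k) + \nabla c(x^k)^\top(z - x^k)\big) - h^\ast(y) + \frac{1}{2\epsilon}\|z - x^k\|^2 \right\}.
\]
First I would justify swapping $\min$ and $\max$: the inner objective is convex (in fact strongly convex) in $z$ for each fixed $y$, concave (affine minus $h^\ast$) in $y$, and the $z$-minimization is coercive, so a minimax theorem such as \cite[Propositions 1.2 and 2.4]{ekeland1999convex} applies and a saddle point exists. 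Second, for fixed $y$, complete the square in $z$: the inner minimum is attained at $z = x^k - \epsilon\,\nabla c(x^k)\,y$, and substituting back collapses the quadratic term, leaving
\[
    \max_{y \in \dom h^\ast}\left\{ y^\top c(x^k) - h^\ast(y) - \frac{\epsilon}{2}\|\nabla c(x^k)\,y\|^2 \right\},
\]
which is exactly the subgradient-regularized problem defining $Y^\epsilon(x^k)$ in Example~\ref{ex:cvx_composite}.

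Third, I would close the loop between the two optimization problems. Let $\bar y \in Y^\epsilon(x^k)$ be any maximizer of the reduced problem. Then $(x^k - \epsilon\,\nabla c(x^k)\bar y,\ \bar y)$ is a saddle point of the minimax objective, so its $x$-component is a (the, by strong convexity) minimizer of \eqref{eq:prox-linear}; hence $x^{k+1} = x^k - \epsilon\,\nabla c(x^k)\bar y$. On the other hand, by the definition \eqref{eq:G_cvx-composite}, $\nabla c(x^k)\bar y \in G(x^k, \epsilon)$, so the prox-linear iterate has the form $x^k - \epsilon\, g$ for some $g \in G(x^k, \epsilon)$. Conversely, any $g = \nabla c(x^k)\bar y \in G(x^k, \epsilon)$ arises from some $\bar y \in Y^\epsilon(x^k)$, which (paired with $x^k - \epsilon g$) is a saddle point, giving $x^k - \epsilon g = x^{k+1}$; this shows the correspondence is exact, not merely one-sided, which matters because $G(x^k,\epsilon)$ and $Y^\epsilon(x^k)$ need not be singletons.

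The one genuine subtlety — and the main obstacle — is a domain/coercivity check, since $h^\ast$ may be $+\infty$ outside a bounded set and $\dom h^\ast$ is exactly where the maximization lives. I would use the standing assumption that $h$ is $L$-Lipschitz, which forces $\dom h^\ast \subseteq \overline{\B}_L(0)$ (all subgradients of $h$ have norm at most $L$), so the feasible set of the reduced maximization is bounded; together with lower semicontinuity of $h^\ast$ this guarantees the maximum is attained and $Y^\epsilon(x^k) \neq \emptyset$. This boundedness also legitimizes the minimax interchange on the compact set $\overline{\B}_L(0)$ rather than all of $\R^m$. Everything else is the routine square-completion already rehearsed in the finite-max special case, so once the compactness point is settled the proof is short.
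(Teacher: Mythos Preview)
Your proposal is correct and follows essentially the same route as the paper: Fenchel-dualize $h$, use $L$-Lipschitz continuity to confine $y$ to a compact set, invoke the minimax theorem (the paper also compactifies the $z$-variable explicitly via a radius bound, whereas you rely on coercivity in $z$; either works), complete the square, and identify the resulting $y$-problem with the subgradient-regularized problem $Y^\epsilon(x^k)$. Your added remark that the correspondence is two-sided is a nice touch, though in this setting $G(x^k,\epsilon)$ is in fact a singleton by Lemma~\ref{lem:subgrad-regularized_Affine_in_y}(a).
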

\begin{proof}
    By the convexity of $h$, we can rewrite \eqref{eq:prox-linear} as a minimax problem using the conjugate function of $h$:
    \begin{equation}\label{eq:prox-sub-conjugate}
        x^{k+1}= \argmin_{x \in \R^n}\left\{\max_{y \in \R^m}\left[y^{\top}\big(c(x^k) + \nabla c(x^k)^\top (x - x^k) \big) - h^{\ast}(y) + \frac{1}{2 \epsilon}\|x - x^k\|^{2}\right] \right\}.
    \end{equation}
    Observe that for any $x$, the inner maximum is attained at some $y_{x}\in \partial h \big(c(x^k) + \nabla c(x^k)^\top (x-x^k) \big)$.
    Since $h$ is $L$-Lipschitz continuous, it follows that $\|y_{x}\| \leq L$ for any $x$. Consequently, we can write \eqref{eq:prox-sub-conjugate} equivalently by restricting $y$ in the compact set $Y = \{y \in \R^{m}\mid \|y\| \leq L\}$. Similarly, due to the strong convexity of $\|x - x^k\|^{2} / (2\epsilon)$, we can also restrict $x$ in the compact set $\overline\B_{\rho}(x^k)$, where the radius is given by $\rho \triangleq \max\big\{\epsilon L \|\nabla c(x^k)\|, \sqrt{2 \epsilon \,[h(c(x^k)) - \inf h ]} \big\}$. 
    Thus, the minimax problem \eqref{eq:prox-sub-conjugate} is equivalent to
    \begin{equation}\label{eq:minimax_bounded}
        \min_{x \in \overline\B_\rho(x^k)} \; \max_{y \in Y}\left[y^{\top}\big(c(x^k) + \nabla c(x^k)^\top (x - x^k) \big) - h^{\ast}(y) + \frac{1}{2 \epsilon}\|x - x^k\|^{2}\right] .
    \end{equation}
    Since the objective function is convex-concave, $\overline\B_{\rho}(x^k)$ and $Y$ are compact, it follows from the minimax theorem~\cite[Propositions 1.2 and 2.4]{ekeland1999convex} that there exists a saddle point on $\overline\B_{\rho}(x^k) \times Y$.
    Thus, we can interchange the minimize and maximum in \eqref{eq:minimax_bounded}. As the inner minimal value is attained at $x = \big(x^k- \epsilon \, \nabla c(x^k) \, y\big) \in \overline\B_{\rho}(x^k)$ for any $y \in Y$, we get a dual solution by direct computation:
    \begin{equation}\label{eq:dual-sol}
        \by \in \argmax_{y \in Y} \left\{ 
        y^{\top} c(x^k) - h^{\ast}(y) - \frac{\epsilon}{2}\big\|\nabla c(x^k) \, y\big\|^{2} 
        \right\}.
    \end{equation}
    Because $(x^{k+1}, \by)$ is a saddle point of the minimax problem \eqref{eq:minimax_bounded}, by the optimality of $x^{k+1}$ for the minimization problem at $\by $, we immediately obtain $x^{k+1}= x^k- \epsilon \, \nabla c(x^k) \, \by$. 
    Noting that the feasible region $Y$ in \eqref{eq:dual-sol} can be omitted without affecting the optimal solution, we have $\nabla c(x^k) \, \bar y = G(x^k, \epsilon)$ and, thus, $x^{k+1} = x^k - \epsilon \, G(x^k, \epsilon)$, completing the proof.
\end{proof}

As a consequence of Proposition \ref{prop:equivalent_prox-linear}, we obtain a stronger version of Proposition \ref{prop:descent_direc} on uniform non-diminishing ranges of  $\epsilon$ and $\eta$ to guarantee the descent inequality \eqref{eq:descent_ineq}. The result below follows directly from~\cite[Lemma 5.1]{drusvyatskiy2018error}.

\begin{corollary}[Descent directions for composite functions]
\label{cor:uniform_descent_direc}
    Consider the nonsmooth composite problem \eqref{eq:cvx_composite}, and let $G$ be the descent-oriented subdifferential defined in \eqref{eq:G_cvx-composite}. For any $\epsilon \in (0, 1/(2L\beta)]$, the following descent inequality holds for every $\bx$:
    \[
        f(\bx - \epsilon g) \leq f(\bx) - \epsilon \|g\|^{2},
        \qquad\forall\, g \in G(\bx, \epsilon).
    \]
\end{corollary}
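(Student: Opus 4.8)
The plan is to reduce the statement to the classical sufficient-decrease estimate for the prox-linear step, exploiting the dual identification already proved in Proposition~\ref{prop:equivalent_prox-linear}. Fix $\bx \in \R^n$ and $\epsilon \in (0, 1/(2L\beta)]$. First I would observe that for the composite function \eqref{eq:cvx_composite} the map \eqref{eq:G_cvx-composite} is single-valued: this is the instance $\varphi(x,y) = y^\top c(x) - h^*(y)$ of Lemma~\ref{lem:subgrad-regularized_Affine_in_y}(a) (cf. Example~\ref{ex:cvx_composite}), so there is a unique $g \in G(\bx,\epsilon)$ to deal with. Proposition~\ref{prop:equivalent_prox-linear} then says that $x^+ \triangleq \bx - \epsilon g$ is exactly the prox-linear iterate \eqref{eq:prox-linear} started from $\bx$, i.e.
\[
    x^+ = \argmin_{z \in \R^n}\left\{ f(z;\bx) + \frac{1}{2\epsilon}\|z-\bx\|^2 \right\}, \qquad f(z;\bx) \triangleq h\big(c(\bx)+\nabla c(\bx)^\top(z-\bx)\big),
\]
and, from the $y$-optimality in the saddle-point characterization used there (together with the convex chain rule), $g \in \partial_z f(x^+;\bx)$. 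In particular $\|x^+-\bx\| = \epsilon\|g\|$, so the claim is now a pure statement about one prox-linear step.

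Next I would run the two standard ingredients behind \cite[Lemma~5.1]{drusvyatskiy2018error}. \emph{Model decrease:} since $z \mapsto f(z;\bx) + \frac{1}{2\epsilon}\|z-\bx\|^2$ is convex and its quadratic part is $\tfrac{1}{\epsilon}$-strongly convex, comparing the value at the minimizer $x^+$ with the value at $z=\bx$, and using the exactness of the linearization at the base point ($f(\bx;\bx) = h(c(\bx)) = f(\bx)$), yields
\[
    f(x^+;\bx) \;\le\; f(\bx) - \frac{1}{\epsilon}\|x^+-\bx\|^2 \;=\; f(\bx) - \epsilon\|g\|^2
\]
(the same bound follows directly from convexity of $f(\cdot;\bx)$ and $g \in \partial_z f(x^+;\bx)$, since $\bx - x^+ = \epsilon g$). \emph{Model error:} the upper half of the two-sided model inequality \eqref{eq:two-sided-model} gives $f(x^+) \le f(x^+;\bx) + \frac{L\beta}{2}\|x^+-\bx\|^2$. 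Adding the two, substituting $\|x^+-\bx\|=\epsilon\|g\|$, and using $\epsilon \le 1/(2L\beta)$ to dominate the error term $\frac{L\beta}{2}\epsilon^2\|g\|^2$ by the descent, I get $f(\bx - \epsilon g) \le f(\bx) - \epsilon\|g\|^2$, as asserted.

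I do not expect a genuine obstacle here: once Proposition~\ref{prop:equivalent_prox-linear} is available, the corollary is essentially a transcription of the prox-linear sufficient-decrease lemma. The two points deserving a little care are (a) correctly reading off $x^+ = \bx - \epsilon g$ and $g \in \partial_z f(x^+;\bx)$ from the saddle-point argument in the proof of Proposition~\ref{prop:equivalent_prox-linear}, and (b) the constant bookkeeping that turns the weak-convexity modulus $L\beta$ of $f$ and the stepsize bound $\epsilon \le 1/(2L\beta)$ into the displayed inequality. The gain over Proposition~\ref{prop:descent_direc} is that here the admissible range of $\epsilon$ (and of the stepsize $\eta = \epsilon$) is uniform over all $\bx$ and bounded away from $0$, which is precisely what the adaptive scheme of Section~\ref{sec5:algorithm_adaptive} needs.
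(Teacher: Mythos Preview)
Your plan is exactly the one behind the paper's proof: the paper simply says the corollary ``follows directly from \cite[Lemma~5.1]{drusvyatskiy2018error}'', and your two steps---the model decrease for the prox-linear subproblem plus the upper half of the two-sided model \eqref{eq:two-sided-model}, after identifying $\bx-\epsilon g$ with the prox-linear iterate via Proposition~\ref{prop:equivalent_prox-linear}---are precisely what that lemma encodes. So the approach is correct and matches the paper's.

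There is, however, a slip in your final constant bookkeeping. Adding the model decrease $f(x^+;\bx)\le f(\bx)-\epsilon\|g\|^2$ and the model error $f(x^+)\le f(x^+;\bx)+\tfrac{L\beta}{2}\epsilon^2\|g\|^2$ gives
\[
    f(\bx-\epsilon g)\;\le\; f(\bx)-\epsilon\Bigl(1-\tfrac{L\beta\epsilon}{2}\Bigr)\|g\|^2,
\]
which for $\epsilon\le 1/(2L\beta)$ yields at best $f(\bx-\epsilon g)\le f(\bx)-\tfrac{3\epsilon}{4}\|g\|^2$; the error term is \emph{absorbed}, not eliminated, so you do not recover the full coefficient $\epsilon$ as displayed. (The same computation falls out of Lemma~\ref{lem:Gradient_ineq} with $y=x=\bx$.) This appears to be a harmless constant slip in the statement of the corollary itself rather than a flaw in your argument: every subsequent use of Corollary~\ref{cor:uniform_descent_direc} in the paper only needs the Armijo condition $f(x^k-\eta g)\le f(x^k)-\alpha\,\eta\|g\|^2$ for some fixed $\alpha\in(0,1)$, which your bound delivers with any $\alpha\le 3/4$.
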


\subsection{An adaptive descent-oriented subgradient method}
\label{sec5.2:algorithm_adaptive}

We have seen in previous section that specializing to the composite structure \eqref{eq:cvx_composite}, the direction obtained from subgradient regularization is equivalent to that of the prox-linear method. It is known that prox-linear method with a constant stepsize enjoys local linear convergence under an error bound condition~\cite{drusvyatskiy2018error}. This naturally leads to the question of whether our proposed algorithm can also attain a local linear rate under the same condition.

However, Algorithm \ref{alg1} with $G$ defined in \eqref{eq:G_cvx-composite} does not guarantee such a rate. The algorithm typically decreases the parameter $\epsilon$ over iterations and employs a diminishing stepsize  to ensure subsequential convergence to stationary points. 
To address this limitation, we propose an adaptive variant of Algorithm \ref{alg1}, presented as Algorithm \ref{alg2}.

\begin{algorithm}[htp]
\caption{An adaptive descent-oriented subgradient method}
\begin{algorithmic}[1]
    \STATE {\bf Input:} 
    starting point $x_{0}$, 
    regularization parameter $\epsilon_{0,0}> 0$, 
    a non-summable sequence $\{a_t\} \downarrow 0$,
    initial stationary target $\nu_{0}$, 
    optimality tolerances $\epsilon_{\text{tol}}, \nu_{\text{tol}} \geq 0$, 
    counter of ratio test $t = 0$, 
    reduction factors $\theta_{\epsilon}, \theta_{\nu} \in (0, 1)$, 
    and the Armijo parameter $\alpha \in (0, 1)$
    \FOR{$k= 0, 1, \cdots $}
        \FOR{$i = 0, 1, \cdots $}
            \STATE Set $\epsilon_{k,i}= \epsilon_{k,0} \, 2^{-i}$ \hfill 
            \STATE Update direction $g^{k, i} \in G(x^k, \epsilon_{k,i})$
    
            \IF{$\epsilon_{k,i}\leq \epsilon_{\text{tol}}$ and $\|g^{k, i}\| \leq \nu_{\text{tol}}$}
                \STATE Set $i_{k}= i$ and {\bf return} $x^k$ \hfill $\triangleright$\textit{ approximate stationary}
            \ENDIF
    
            \FOR{$j = 0, 1, \cdots, i$}
                \STATE Set $\eta = \epsilon_{k,0}  \, 2^{-j}$ \hfill $\triangleright$\textit{ line search}
                \IF{$f(x^k- \eta \, g^{k, i}) \leq f(x^k) - \alpha \, \eta \|g^{k, i}\|^{2}$}
                    \STATE Set $i_{k}= i$ {and $\eta_k = \argmin\big\{f(x^k - \eta\,g^{k,i}) \mid \eta \in \{\epsilon_{k,0}\,2^{-j}, \cdots, \epsilon_{k,0}\,2^{-i}\} \big\}$}
                    \STATE Go to line 13
                \ENDIF
            \ENDFOR
        \ENDFOR
        \STATE Update $x^{k+1} = x^k- \eta_{k}\, g^{k, i}$ 

        \IF{$\|g^{k, i_k}\| \leq \nu_{k}$}
            \STATE $t \leftarrow (t+1)$ 
            \STATE Update $\tilde g^k\in G(x^k, \tilde\epsilon_{k})$ with $\tilde\epsilon_{k} = (a_{t})^{1/4}$
    
        \IF{$\tilde\epsilon_{k}\leq \epsilon_{\text{tol}}$ and $\|\tilde g^k\| \leq \nu_{\text{tol}}$}
            \STATE {\bf Return} $x^k$ \hfill $\triangleright$\textit{ approximate stationary}
        \ENDIF
            \STATE Set $\nu_{k+1}= \theta_{\nu} \cdot \nu_{k}$ 
            
            \IF{$\frac{\tilde\epsilon_{k} \|\tilde g^k\|}{\left(\epsilon_{k,i_k} \|g^{k, i_k}\|\right)^{1/2}} \leq \frac{1}{\epsilon_{k,0}}$}
                \STATE Set $\epsilon_{k+1, 0} = \epsilon_{k, 0}$ \hfill $\triangleright$\textit{ ratio test} 
            \ELSE 
                \STATE Set $\epsilon_{k+1,0} = \theta_{\epsilon} \cdot \epsilon_{k,0}$
            \ENDIF 
        \ELSE 
            \STATE Set $\epsilon_{k+1,0} = \epsilon_{k,0}$ and $\nu_{k+1} = \nu_{k}$ 
        \ENDIF
    \ENDFOR
\end{algorithmic}
\label{alg2}
\end{algorithm}

The key innovation of Algorithm \ref{alg2} lies in the presence of two sequences $\{\epsilon_{k,0}\}$ and $\{\tilde\epsilon_k\}$. The primary sequence $\{\epsilon_{k,0}\}$ is responsible for computing descent directions, as in Algorithm \ref{alg1}, while the auxiliary diminishing sequence $\{\tilde\epsilon_k\}$ is used to monitor approximate stationarity through the norm of $\tilde g^k \in G(x^k, \tilde\epsilon_k)$. We compare $\|\tilde g^k\|$ and $\|g^{k,i_k}\|$, where $g^{k,i_k}$ denotes the descent-oriented subgradient satisfying the descent inequality, to determine whether to reduce $\epsilon_{k,0}$ in the next iteration. Specifically, when the ratio test
\[
    \frac{\tilde\epsilon_{k} \|\tilde g^k\|}{(\epsilon_{k,i_k} \|g^{k, i_k}\|)^{1/2}} \leq \frac{1}{\epsilon_{k,0}}
\]
is satisfied, $\epsilon_{k,0}$ remains a constant; otherwise, it is reduced by a factor of $\theta_\epsilon$. This mechanism allows Algorithm \ref{alg2} to maintain a non-diminishing effective stepsize whenever possible, thereby recovering local linear convergence for structured problems while still ensuring subsequential convergence to stationary points in general settings.

\subsubsection{Convergence analysis}

The inner loop indexed by $i$ in Algorithm \ref{alg2} is essentially the same as that of Algorithm \ref{alg1}. So the inner loop terminates in finitely many iterations by Proposition \ref{prop:inner_loop} and $i_k < +\infty$. We next prove the subsequential convergence.
\begin{theorem}[Subsequential convergence of Algorithm \ref{alg2}]
\label{thm:convergence_alg2}
    Suppose that $f$ is bounded from below. Let $\{x^k\}$ be a sequence generated by Algorithm \ref{alg2} with $\epsilon_{\text{tol}} = \nu_{\text{tol}}= 0$. One of the following occurs:\\
    (a) For some $k$, the inner loop does not terminate. Then $x^k$ is a stationary point.\\
    (b) There is an index set $K \in \N^{\sharp}_{\infty}$ such that $\tilde g^k \to_{K} 0$. Consequently, for any such subsequence $\tilde g^k \to_{K} 0$, any accumulation point of $\{x^k\}_{k \in K}$ is a stationary point.\\
    (c) Any accumulation point of $\{x^k\}$ is a stationary point.
\end{theorem}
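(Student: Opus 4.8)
The plan is to mirror the structure of Theorem~\ref{thm:convergence_alg1} while carefully tracking the two parameter sequences $\{\epsilon_{k,0}\}$ and $\{\tilde\epsilon_k\}$ that are specific to Algorithm~\ref{alg2}. First I would dispose of case (a): if the inner loop fails to terminate for some $k$, then by Proposition~\ref{prop:inner_loop} (whose hypothesis is exactly $0\notin\partial f(x^k)$) we must have $0\in\partial f(x^k)$, so $x^k$ is stationary and we are done. For the remainder, assume the inner loop terminates for every $k$, so that $i_k<+\infty$ and the Armijo descent inequality
\[
f(x^{k+1}) \le f(x^k) - \alpha\,\eta_k\,\|g^{k,i_k}\|^2 = f(x^k) - \alpha\,\|x^{k+1}-x^k\|\cdot\|g^{k,i_k}\|
\]
holds for all $k$. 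Since $f$ is bounded below, summing gives $\sum_k \eta_k\|g^{k,i_k}\|^2 = \sum_k \|x^{k+1}-x^k\|\cdot\|g^{k,i_k}\| < +\infty$, which is the key telescoping bound.

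Next I would split on whether the stationary-target test $\|g^{k,i_k}\|\le\nu_k$ (line 18) is triggered infinitely often. If it is triggered only finitely often, then from some $\underline k$ onward the else-branch (line 30) fixes $\nu_k\equiv\underline\nu>0$ and $\epsilon_{k,0}\equiv\underline\epsilon>0$; moreover $\|g^{k,i_k}\|>\underline\nu$ for all $k\ge\underline k$. This is precisely the setting of Case~1 in the proof of Theorem~\ref{thm:convergence_alg1}: the finite-sum bound forces $\eta_k\to0$ and $\sum_k\|x^{k+1}-x^k\|<+\infty$, so $\{x^k\}$ converges to some $\bx$; since $\eta_k\ge\underline\epsilon\,2^{-i_k}$ we get $i_k\to+\infty$; and then, using local boundedness of $G(\cdot,\underline\epsilon\,2^{-i})$ at $\bx$, the failure of the Armijo test at index $i$ for each fixed $i$ (which holds for all large $k$ since $i_k\to+\infty$), together with continuity of $f$ and property \eqref{eq:Def_G1}, contradicts Proposition~\ref{prop:descent_direc} unless $0\in\partial f(\bx)$. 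Hence conclusion (c) holds in this sub-case. The argument here is essentially a verbatim transcription of Case~1 of Theorem~\ref{thm:convergence_alg1}, the only change being the bookkeeping of which branch keeps $(\nu_k,\epsilon_{k,0})$ constant.

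If instead the test $\|g^{k,i_k}\|\le\nu_k$ is triggered infinitely often, let $K\in\N^\sharp_\infty$ index those iterations. Along $K$ the counter $t$ increments, so $\tilde\epsilon_k=(a_{t(k)})^{1/4}$ with $t(k)\to\infty$, hence $\tilde\epsilon_k\to_K 0$; also $\nu_{k+1}=\theta_\nu\nu_k$ happens infinitely often, so $\nu_k\downarrow0$ and therefore $\liminf_k\|g^{k,i_k}\|=0$. For conclusion (b) I must show $\tilde g^k\to_K 0$; the natural approach is to note that on $K$ we have $\|\tilde g^k\|\le$ (something controlled), and that along $K$ the sequence $\{x^k\}_{k\in K}$ has accumulation points at which, since $\tilde\epsilon_k\to0$ and $\tilde g^k\in G(x^k,\tilde\epsilon_k)$, property \eqref{eq:Def_G1} would force any limit of $\tilde g^k$ into $\partial f$ of that accumulation point; one then uses the descent bound and Lemma~\ref{lem:consequence_of_descent} (applied to $\{x^k\}$ and $\{g^{k,i_k}\}$) to locate a further subsequence where $x^k\to\bx$ and $g^{k,i_k}\to0$, and argues that along this subsequence $\tilde g^k\to0$ as well—this is where the ratio test and local boundedness of $G(\cdot,\epsilon)$ must be used to transfer smallness of $g^{k,i_k}$ to smallness of $\tilde g^k$. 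Finally for (c) in this case: since $\nu_k\downarrow0$ we have $\liminf_k\|g^{k,i_k}\|=0$, so Lemma~\ref{lem:consequence_of_descent} gives, for any accumulation point $\bx$ of $\{x^k\}$, a subsequence $N$ with $\max\{\|x^k-\bx\|,\|g^{k,i_k}\|\}\to_N0$; since $\epsilon_{k,i_k}\le\epsilon_{k,0}$ is bounded but need \emph{not} vanish, the cleanest route is to pass instead to the auxiliary $\tilde g^k$: combining $\liminf\|g^{k,i_k}\|=0$ with the ratio test $\tilde\epsilon_k\|\tilde g^k\|\le \epsilon_{k,0}^{-1}(\epsilon_{k,i_k}\|g^{k,i_k}\|)^{1/2}$ shows $\tilde\epsilon_k\|\tilde g^k\|\to_N 0$ along that subsequence, and since $\tilde\epsilon_k\to0$ with $\tilde g^k\in G(x^k,\tilde\epsilon_k)$, property \eqref{eq:Def_G1} yields $0\in\partial f(\bx)$.

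\textbf{Main obstacle.} The delicate point is conclusion (b) and the precise handling of (c) when $\epsilon_{k,0}$ stabilizes at a positive value but the stationary test fires infinitely often: there one cannot invoke \eqref{eq:Def_G1} directly through $g^{k,i_k}$ (whose parameter $\epsilon_{k,i_k}$ need not vanish), and must route the stationarity certificate through $\tilde g^k$ and the ratio test. Making the ratio-test inequality do exactly this work—showing $\tilde\epsilon_k\|\tilde g^k\|\to0$ forces $\|\tilde g^k\|\to0$ since $\tilde\epsilon_k\to0$ only if one has a lower bound preventing $\|\tilde g^k\|$ from blowing up, which comes from local boundedness of $G(\cdot,\tilde\epsilon_k)$ along the convergent subsequence of $x^k$—is the part requiring the most care, and is presumably why the theorem's statement separates (b) from (c).
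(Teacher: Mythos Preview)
Your high-level case split and the handling of case (a) and of ``test triggered finitely often'' are fine and match the paper. The gap is in the ``triggered infinitely often'' branch, where you miss two things the paper's proof hinges on.

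First, you never split on whether $\epsilon_{k,0}\downarrow 0$ or $\epsilon_{k,0}$ stabilizes at some $\underline\epsilon>0$. This matters because the ratio-test inequality $\tilde\epsilon_k\|\tilde g^k\|\le \epsilon_{k,0}^{-1}(\epsilon_{k,i_k}\|g^{k,i_k}\|)^{1/2}$ is only guaranteed when the ratio test \emph{passes}; when it fails, $\epsilon_{k,0}$ is reduced. So you cannot invoke this inequality along an arbitrary subsequence. If $\epsilon_{k,0}\downarrow 0$, the paper instead uses $\epsilon_{k,i_k}\le\epsilon_{k,0}\to 0$ and applies \eqref{eq:Def_G1} directly to $g^{k,i_k}$ to get (c); there is no need to touch $\tilde g^k$. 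Only when $\epsilon_{k,0}$ stabilizes does the ratio test hold for all large $k$ in $N'=\{k:\|g^{k,i_k}\|\le\nu_k\}$, and that is where the $\tilde g^k$ argument lives.

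Second, in that stabilized sub-case your mechanism for forcing $\tilde g^k\to 0$ is wrong. From $\tilde\epsilon_k\|\tilde g^k\|\to 0$ and $\tilde\epsilon_k\to 0$ you cannot conclude $\|\tilde g^k\|\to 0$, even with local boundedness (take $\tilde\epsilon_k=1/k$ and $\|\tilde g^k\|\equiv 1$). The paper's actual device is a contradiction using the \emph{non-summability} of $\{a_t\}$: assume $\|\tilde g^{k_t}\|\ge\delta>0$ for all $t$, combine the ratio test with $\epsilon_{k_t,i_{k_t}}\le\eta_{k_t}\le\underline\epsilon$ to get $a_t^{1/4}\delta\le a_t^{1/4}\|\tilde g^{k_t}\|\le \underline\epsilon^{-3/4}(\eta_{k_t}\|g^{k_t,i_{k_t}}\|^2)^{1/4}$, raise to the fourth power, and sum over $t$; the right side is controlled by $\sum_k\eta_k\|g^{k,i_k}\|^2<\infty$, forcing $\sum_t a_t<\infty$, a contradiction. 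This yields a subsequence $K$ with $\tilde g^k\to_K 0$ and then \eqref{eq:Def_G1} with $\tilde\epsilon_k\to 0$ gives (b). You never mention the non-summability of $\{a_t\}$, and without it there is no leverage to extract $\tilde g^k\to 0$.
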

\begin{proof}
    If the inner loop does not terminate for some $k$, then by Proposition \ref{prop:inner_loop}, we have $0 \in \partial f(x^k)$. Now suppose the inner loop terminates for every $k$. We will prove that either (b) or (c) holds. According to Proposition \ref{prop:inner_loop}, there exists an index $i_{k}$ for each $k$ such that the following descent inequality holds:
    \[
        f(x^{k+1}) = f(x^k- \eta_{k}\, g^{k, i_k}) \leq f(x^k) - \alpha \, \eta_{k} \|g^{k, i_k}\|^{2}.
    \]
    Since $f$ is bounded from below, summing the descent inequality over $k$ gives
    \begin{equation}\label{eq:finite_sum}
        \sum^{+\infty}_{k=0}\eta_{k}\|g^{k, i_k}\|^{2}= \sum^{+\infty}_{k=0}\|x^{k+1}- x^k\| \cdot \|g^{k, i_k}\| < +\infty.
    \end{equation}

    \noindent
    {\bf Case 1:} Suppose there exist $\underline k \in \N$, $\underline\epsilon > 0$, and $\underline\nu > 0$ such that $\epsilon_{k,0}= \underline\epsilon$ and $\nu_{k}= \underline\nu$ for all $k \geq \underline k$. Then, for all such $k$, we have $\|g^{k, i_k}\| > \underline\nu$, and consequently, $\tilde g^k$ is not constructed. Therefore statement (b) does not hold, and it remains to prove (c).
    Using \eqref{eq:finite_sum} and the fact that $\|g^{k, i_k}\| > \underline\nu$ for all $k \geq \underline k$, we can apply the same arguments as in Case 1 of Theorem \ref{thm:convergence_alg1} to conclude that $\{x^k\}$ converges to some point $\bx$ with $0 \in \partial f(\bx)$. Therefore, statement (c) holds.

    \vspace{0.1in}
    \noindent
    {\bf Case 2:} Otherwise, there exists $N \in \N^{\sharp}_{\infty}$ such that $g^{k, i_k}\to_{N}0$. Let $\bx$ be an accumulation point of $\{x^k\}$. By the descent inequality, we apply Lemma \ref{lem:consequence_of_descent} to have that either $x^k \to \bx$, or $\liminf_{k \to +\infty}\, \max\big\{\|x^k- \bx\|, \|g^{k, i_k}\|\big\} = 0$.
    In either scenarios, by passing to a subsequence if necessary, we can assume that $\max\{\|x^k- \bx\|, \|g^{k, i_k}\|\} \to_{N} 0$.

    \noindent
    {\bf Case 2.1:} If $\epsilon_{k,0}\downarrow 0$, since $\epsilon_{k, i_k} \leq \epsilon_{k,0}\downarrow 0$, property \eqref{eq:Def_G1} implies
    \[
        0 = \lim_{k(\in N) \to +\infty}g^{k, i_k}\,\in \left[\,\limsup_{k(\in N) \to +\infty}G(x^k, \epsilon_{k,i_k})\right] \subset \partial f(\bx).
    \]
    Thus, any accumulation point $\bx$ of $\{x^k\}$ satisfies $0 \in \partial f(\bx)$, establishing (c).

    \vspace{0.05in}
    \noindent
    {\bf Case 2.2:} Otherwise, the sequence $\{\epsilon_{k,0}\}$ converges monotonically to a positive scalar $\underline\epsilon$, and there exists $\underline k \in \N$ such that $\epsilon_{k,0} = \underline\epsilon$ for all $k \geq \underline k$. Define the index set $N^{\prime} \triangleq \{ k \in \N \mid \|g^{k, i_k}\| \leq \nu_{k} \}$. 
    Since $\|g^{k, i_k}\| \to_{N}0$, it follows that $N^{\prime} \in \N^{\sharp}_{\infty}$. Let $k_{t}$ denote the $t $-th index in $N^{\prime}$. To prove statement (b), we aim to find a subsequence $\tilde g^k \to_{K} 0$ for some index set $K \in N^{\sharp}_{\infty}$ with $K \subset N^{\prime}$.
    
    Suppose for contradiction that there exists $\delta > 0$ such that $\|\tilde g^k\| \geq \delta$ for all $k \in N^{\prime}$, or equivalently, $\|\tilde g^{k_t}\| \geq \delta$ for all $t \in \N$. For any $t \in \N$ with $k_{t} \geq \underline k$, since $\epsilon_{k_t, 0} = \underline\epsilon$, the ratio test must be satisfied:
    \[
        \frac{\tilde\epsilon_{k_t}\|\tilde g^{k_t}\|}{\big(\epsilon_{k_t, i_{k_t}}\|g^{k_t, i_{k_t}}\|\big)^{1/2}} 
        \leq \frac{1}{\epsilon_{k_t,0}} 
        = \frac{1}{\underline\epsilon}.
    \]
    Observe that $\tilde\epsilon_{k_t} = (a_{t})^{1/4}$ by the update rule of $\tilde\epsilon_{k}$. For all $t \in \N$ with $k_t \geq \underline k$, Rearranging the inequality and using $\epsilon_{k_t, i_{k_t}} = \epsilon_{k_t, 0} \, 2^{-i_{k_t}}\leq \eta_{k_t} \leq \epsilon_{k_t, 0} = \underline\epsilon$ yield
    \[
        (a_{t})^{\frac{1}{4}}\, \|\tilde g^{k_t}\| = \tilde\epsilon_{k_t}\|\tilde g^{k_t}\|
        \leq \underline\epsilon^{-1} \big(\epsilon_{k_t, i_{k_t}}\|g^{k_t, i_{k_t}}\|\big)^{1/2}
        \leq \underline\epsilon^{-3/4} \big(\eta_{k_t} \| g^{k_t, i_{k_t}}\|^{2}\big)^{1/4}.
    \]
    Summing both sides over $t \in \N$ with $k_t \geq \underline k$ and using the lower bound $\|\tilde g^{k_t}\| \geq \delta$, we get
    \begin{align*}
        \delta^{4}\sum_{\{t \in \N \,\mid\, k_t \geq \underline k\}}a_{t}
        \leq \sum_{\{t \in \N \,\mid\, k_t \geq \underline k\}} a_{t} \, \|\tilde g^{k_t}\|^{4} 
        & \leq\; \underline\epsilon^{-3}\cdot \sum_{\{t \in \N \,\mid\, k_t \geq \underline k\}}\eta_{k_t}\|g^{k_t, i_{k_t}}\|^{2} \\
        & \leq\; \underline\epsilon^{-3}\cdot \sum_{k \in \N}\eta_{k}\|g^{k, i_k}\|^{2} 
        \;\overset{\eqref{eq:finite_sum}}{<}\; \infty.
    \end{align*}
    This implies $\sum_{\{t \in \N \,\mid\, k_t \geq \underline k\}}a_{t} < +\infty$, contradicting the fact that $\{a_{t}\}$ is a non-summable sequence. Hence, there exists a subsequence $\tilde g^k \to_{K} 0$ for some index set $K \in \N^{\sharp}_{\infty}$ with $K \subset N^{\prime}$. Finally, let $\bx$ be any accumulation point of $\{x^k\}_{k \in K}$. Since $\tilde\epsilon_{k_t}= (a_{t})^{1/4}\downarrow 0$, it follows from property \eqref{eq:Def_G1} that $0 \in \partial f(\bx)$. This establishes statement (b).
\end{proof}

\subsubsection{Local linear convergence for nonsmooth composite functions}
\label{sec:local_linear_converge}

In this section, we analyze convergence rate of Algorithm \ref{alg2} to solve composite problem \eqref{eq:cvx_composite} under regularity conditions, where $G(x,\epsilon)$ is obtained from the subgradient regularization as in \eqref{eq:G_cvx-composite}. Let us denote the set of stationary points of \eqref{eq:cvx_composite} by $(\partial f)^{-1}(0) \triangleq \{x \in \R^{n}\mid 0 \in \partial f(x)\}$. We begin by recalling the notion of metric subregularity.

\begin{definition}
    A set-valued mapping $S: \R^{n}\rightrightarrows \R^{m}$ is \textsl{metrically subregular at $(\bx, \bar y) \in \operatorname{gph} S$} if there exist positive scalars $\kappa$ and $\tau$ such that
    \[
        \dist\big(x, S^{-1}(\bar y)\big) \leq \kappa \cdot \dist(\bar y, S(x)),
        \qquad\forall\, x \in \B_{\tau}(\bx).
    \]
\end{definition}

The following lemma provides an error bound based on metric subregularity. It is a uniformed version of \cite[Theorem 5.11]{drusvyatskiy2018error} in terms of $\epsilon$. A self-contained proof is given in Appendix B.1.
\begin{lemma}
\label{lem:from_metric_subregular_to_error_bound} 
    Consider problem \eqref{eq:cvx_composite} with $G$ defined in \eqref{eq:G_cvx-composite}. Let $\bx$ be a stationary point. If $\partial f$ is metrically subregular at $(\bx, 0)$ with modulus $\kappa$, then for any $\bar\epsilon > 0$, there exists a constant $\tau > 0$ such that the following error bound holds:
    \[
        \dist\big(x, (\partial f)^{-1}(0) \big) \leq \big((3L\beta \epsilon + 2) \kappa + 2\epsilon \big) \cdot \|G(x, \epsilon)\|,
        \qquad\forall\, x \in \B_{\tau}(\bx),\, \epsilon \in (0,\bar\epsilon).
    \]
\end{lemma}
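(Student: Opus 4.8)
## Proof Proposal

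The plan is to bootstrap from the metric subregularity hypothesis to the claimed error bound by relating the quantity $\dist(0, \partial f(x))$ — which subregularity controls — to $\|G(x,\epsilon)\|$, which is the regularized subgradient. The key observation is that $G(x,\epsilon) = \nabla c(x)\, \by$ where $\by$ solves the regularized dual problem \eqref{eq:G_cvx-composite}, and by Proposition~\ref{prop:equivalent_prox-linear} the point $x^+ \triangleq x - \epsilon\, G(x,\epsilon)$ is exactly the prox-linear update from $x$. I would first record two standard facts about the prox-linear step: (i) the displacement is small, namely $\|x^+ - x\| = \epsilon \|G(x,\epsilon)\| \le \epsilon L \|\nabla c(x)\|$, so by shrinking $\tau$ we keep $x^+$ near $\bx$; and (ii) the "dual optimality" of $\by$ gives a genuine subgradient of $f$ at the \emph{linearized} point: there is an element of $\partial f(z;x)$ at $z = x^+$ of the form $\nabla c(x)\by = G(x,\epsilon)$, and in fact the prox-linear optimality condition reads $\epsilon^{-1}(x - x^+) = G(x,\epsilon) \in \partial_z f(x^+; x)$.

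Next I would produce a true subgradient of $f$ (not of the model) with controlled norm. Write $w \triangleq \nabla c(x^+)\,\by \in \partial f(x^+)$ (using $\by \in \partial h(c(x) + \nabla c(x)^\top(x^+-x))$ and the chain rule, valid since $h$ is convex and $c$ is $C^1$). Then
\[
    \|w\| \le \|G(x,\epsilon)\| + \|(\nabla c(x^+) - \nabla c(x))\,\by\|
    \le \|G(x,\epsilon)\| + \beta \|x^+ - x\|\cdot L
    = \|G(x,\epsilon)\| + L\beta\epsilon\,\|G(x,\epsilon)\|,
\]
using $\|\by\|\le L$ (Lipschitz continuity of $h$) and the $\beta$-Lipschitz continuity of $\nabla c$. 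Hence $\dist(0, \partial f(x^+)) \le (1 + L\beta\epsilon)\,\|G(x,\epsilon)\|$. Applying metric subregularity at $(\bx,0)$ — legitimate because $x^+ \in \B_\tau(\bx)$ for $\tau$ small — gives
\[
    \dist\big(x^+, (\partial f)^{-1}(0)\big) \le \kappa\,\dist(0,\partial f(x^+)) \le \kappa(1+L\beta\epsilon)\,\|G(x,\epsilon)\|.
\]
Finally, I would pass from $x^+$ back to $x$ by the triangle inequality: $\dist(x,(\partial f)^{-1}(0)) \le \dist(x^+,(\partial f)^{-1}(0)) + \|x - x^+\| \le \kappa(1+L\beta\epsilon)\|G(x,\epsilon)\| + \epsilon\|G(x,\epsilon)\|$, which is already of the advertised form (indeed slightly sharper than $(3L\beta\epsilon+2)\kappa + 2\epsilon$, so the stated constant gives room to absorb any bookkeeping slack — e.g. if one prefers to bound $\|\by\|$ on the regularized problem more crudely, or if the chain-rule subgradient needs the two-sided model estimate \eqref{eq:two-sided-model} to land $x^+$ in the subregularity neighborhood).

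The main obstacle I anticipate is the \textbf{uniformity in $\epsilon$}: the neighborhood radius $\tau$ must be chosen independently of $\epsilon \in (0,\bar\epsilon)$. This is handled by noting all the $\epsilon$-dependent quantities ($\|x^+-x\|$, the gap $\|w - G(x,\epsilon)\|$) are bounded by $C\bar\epsilon$ times things that are locally bounded near $\bx$ (namely $\|\nabla c\|$, which is continuous, hence bounded on a fixed ball). So one fixes a ball $\B_{\tau_0}(\bx)$ on which $\|\nabla c\|\le M$, then shrinks to $\tau \le \tau_0$ small enough that $x \in \B_\tau(\bx) \Rightarrow x^+ \in \B_{\tau_{\mathrm{reg}}}(\bx)$ uniformly over $\epsilon < \bar\epsilon$, where $\tau_{\mathrm{reg}}$ is the subregularity radius. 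A secondary subtlety is justifying the chain rule $\nabla c(x^+)\by \in \partial f(x^+)$ with $\by$ drawn from the \emph{model's} subdifferential at $x^+$ rather than $f$'s — here one uses that $\by \in \partial h(c(x)+\nabla c(x)^\top(x^+-x))$ and, separately, that $\by$ need not be a subgradient of $h$ at $c(x^+)$; instead one invokes that $w = \nabla c(x^+)\by$ lies in $\partial f(x^+)$ only after checking $\by \in \partial h(c(x^+))$ fails in general — so the cleaner route is to bound $\dist(0,\partial f(x^+))$ by comparing $G(x,\epsilon)\in\partial_z f(x^+;x)$ with $\partial f(x^+)$ directly via the model error \eqref{eq:two-sided-model} and its gradient counterpart, which is precisely the content borrowed from \cite[Lemma 5.11]{drusvyatskiy2018error}.
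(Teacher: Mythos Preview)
Your overall architecture---produce a nearby point with a small Clarke subgradient, apply metric subregularity there, and triangle back to $x$---matches the paper's. The paper, however, does not work with $x^+ = z^\epsilon(x)$ directly: it invokes the variational-principle result (Lemma~\ref{lem:Variational_principle}, i.e.\ \cite[Theorem~5.3]{drusvyatskiy2018error}) to obtain a point $\hx$ with $\|z^\epsilon(x)-\hx\|\le\|z^\epsilon(x)-x\|$ and $\dist(0,\partial f(\hx))\le(3L\beta+2/\epsilon)\|z^\epsilon(x)-x\|$. You correctly identify that your direct attempt via $w=\nabla c(x^+)\by$ fails (since $\by\in\partial h$ at the linearized point, not at $c(x^+)$), and you gesture at Drusvyatskiy--Lewis for the repair; the paper makes this precise via the Ekeland-type Lemma~\ref{lem:Variational_principle} rather than a gradient comparison of $f(\cdot\,;x)$ and $f$.

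The genuine gap is your treatment of uniformity in $\epsilon$. Your bound $\|x^+-x\|\le \bar\epsilon\, L M$ is a \emph{constant} in $x$; it does not shrink as $x\to\bx$. Since $\bar\epsilon>0$ is arbitrary and the subregularity radius $\tau_{\mathrm{reg}}$ is fixed by the hypothesis, the implication ``$x\in\B_\tau(\bx)\Rightarrow x^+\in\B_{\tau_{\mathrm{reg}}}(\bx)$'' cannot be forced by shrinking $\tau$ whenever $\bar\epsilon LM\ge\tau_{\mathrm{reg}}$. The paper closes exactly this gap with the uniform H\"older stability Lemma~\ref{lem:Holder_stability}, which gives
\[
\sup_{\epsilon\in(0,\bar\epsilon)}\|z^\epsilon(x)-x\|\le D\,\|x-\bx\|^{1/2},
\]
so that $\|\hx-\bx\|\le \|x-\bx\|+2\|z^\epsilon(x)-x\|\le \varepsilon+2D\sqrt{\varepsilon}$ can be made $\le\tau_{\mathrm{reg}}$ by choosing the final radius $\varepsilon$ small, \emph{uniformly} over $\epsilon\in(0,\bar\epsilon)$. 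This H\"older stability is the missing ingredient in your proposal; without it (or an equivalent device showing $\epsilon\|G(x,\epsilon)\|\to 0$ as $x\to\bx$ uniformly in $\epsilon$), the argument does not close.
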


With all the ingredients, we begin by establishing in Proposition \ref{prop:constant_stepsize} that the stepsize sequence $\{\eta_{k}\}$ in Algorithm \ref{alg2} remains uniformly bounded away from zero. Recall that for each iteration $k$, the stepsize $\eta_{k}$ is selected from the set $\{\epsilon_{k,0} \,, 2^{-1}\epsilon_{k,0} \,,\, \cdots \,, 2^{-i_k}\epsilon_{k,0}\}$ satisfying the descent condition $f(x^k- \eta g^{k, i}) \leq f(x^k) - \alpha \, \eta \|g^{k, i}\|^{2}$. If the ratio test fails, the upper bound $\epsilon_{k, 0}$ is reduced by a factor of $\theta_{\epsilon}$. According to Corollary \ref{cor:uniform_descent_direc} and given $\alpha \in (0,1)$, any $\epsilon \in (0, 1/(2L \beta) ]$ paired with a stepsize $\eta = \epsilon$ ensures the descent condition. Thus, a uniform lower bound on $\{\eta_{k}\}$ can be guaranteed by showing that $\epsilon_{k,0}$ is only reduced by a factor of $\theta_{\epsilon}$ finitely many times; i.e., the ratio test fails only for finitely many iterations under an error bound condition. We defer the proof to Appendix B.2.

\begin{proposition}[Eventual constant stepsize]
\label{prop:constant_stepsize} 
    Consider problem \eqref{eq:cvx_composite}, and let $\{x^k\}$ be a sequence generated by Algorithm \ref{alg2} using $G$ defined in \eqref{eq:G_cvx-composite}. Suppose that the level set $X \triangleq \{x \in \R^{n}\mid f(x) \leq f(x^0)\}$ is bounded, and $\partial f$ is metrically subregular at $(\bx, 0)$ for every $\bx \in (\partial f)^{-1}(0) \cap X$. Then there exists a constant $\underline\epsilon > 0$ such that $\epsilon_{k,0}= \underline\epsilon$ for all sufficiently large $k$. Consequently, $\eta_{k} \geq \epsilon_{k,i_k} \geq \min\{\underline\epsilon, 1/(4L\beta)\}$ for all sufficiently large $k$.
\end{proposition}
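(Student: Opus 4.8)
The plan is to argue by contradiction: suppose $\epsilon_{k,0}$ is reduced by a factor of $\theta_\epsilon$ infinitely often, so that $\epsilon_{k,0} \downarrow 0$. Since every reduction of $\epsilon_{k,0}$ happens precisely when $\|g^{k,i_k}\| \le \nu_k$, and $\nu_{k+1} = \theta_\nu \nu_k$ on each such step, we would have $\nu_k \downarrow 0$ as well; in particular there is an index set $N' \in \N^\sharp_\infty$ along which $\|g^{k,i_k}\| \to 0$ and, at those same indices, $\tilde g^k$ is constructed with $\tilde\epsilon_k = (a_t)^{1/4} \downarrow 0$ (the counter $t$ increments along $N'$). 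First I would establish that $\{x^k\}$ stays in the bounded level set $X$ (immediate from the descent inequality $f(x^{k+1}) \le f(x^k) - \alpha \eta_k \|g^{k,i_k}\|^2 \le f(x^k)$), so it has accumulation points, all lying in $X$.

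Next I would exploit the summability consequence of the descent inequality. As in the proof of Theorem~\ref{thm:convergence_alg2}, summing gives $\sum_k \eta_k \|g^{k,i_k}\|^2 < +\infty$, hence $\sum_k \|x^{k+1}-x^k\|\cdot\|g^{k,i_k}\| < +\infty$. Combined with Corollary~\ref{cor:uniform_descent_direc} — which guarantees that for $\epsilon \le 1/(2L\beta)$ the stepsize $\eta=\epsilon$ already satisfies the Armijo condition, so $\eta_k$ and $\epsilon_{k,i_k}$ are bounded below by a fixed multiple of $\epsilon_{k,0}$ once $\epsilon_{k,0}$ is small — I can control the ratio appearing in the ratio test. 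The heart of the argument then mirrors Case~2.2 of Theorem~\ref{thm:convergence_alg2}: along $N'$, if the ratio test were satisfied at iterations where $\epsilon_{k,0}$ is \emph{not} reduced, we would get (after rearranging, and using $\epsilon_{k,i_k} \le \eta_k$) a bound of the form $(a_t)^{1/4}\|\tilde g^{k_t}\| \lesssim \epsilon_{k_t,0}^{-3/4}(\eta_{k_t}\|g^{k_t,i_{k_t}}\|^2)^{1/4}$, whose fourth power is summable in $t$; this forces $\|\tilde g^{k_t}\| \to 0$ along a further subsequence, unless the ratio test fails infinitely often, which is exactly the contradiction hypothesis. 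So in the contradiction scenario the ratio test fails infinitely often — I need to turn this into a contradiction with the error bound.

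The decisive step is the error bound of Lemma~\ref{lem:from_metric_subregular_to_error_bound}. Take an accumulation point $\bx$ of $\{x^k\}_{k \in N'}$; it lies in $X$, and since $g^{k,i_k} = G(x^k,\epsilon_{k,i_k}) \to 0$ with $\epsilon_{k,i_k} \le \epsilon_{k,0} \downarrow 0$, property \eqref{eq:Def_G1} gives $0 \in \partial f(\bx)$, so $\bx \in (\partial f)^{-1}(0) \cap X$ and $\partial f$ is metrically subregular at $(\bx,0)$. Fix a radius $\tau$ and the constant from Lemma~\ref{lem:from_metric_subregular_to_error_bound}; for $k \in N'$ large enough $x^k \in \B_\tau(\bx)$, so both $\|G(x^k,\epsilon_{k,i_k})\|$ and $\|G(x^k,\tilde\epsilon_k)\| = \|\tilde g^k\|$ bound $\dist(x^k,(\partial f)^{-1}(0))$ from below up to the stated multiplicative constants $(3L\beta\epsilon+2)\kappa + 2\epsilon$, which are uniformly bounded for $\epsilon$ in a bounded range. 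The point is that this yields $\|\tilde g^k\| \ge c\,\dist(x^k,(\partial f)^{-1}(0))$ and $\|g^{k,i_k}\| \ge c\,\dist(x^k,(\partial f)^{-1}(0))$ with a common constant $c>0$; substituting into the ratio test quantity $\tilde\epsilon_k\|\tilde g^k\| / (\epsilon_{k,i_k}\|g^{k,i_k}\|)^{1/2}$ and using $\tilde\epsilon_k = (a_t)^{1/4} \to 0$ together with $\epsilon_{k,i_k}$ bounded below by $\min\{\underline\epsilon_0, 1/(4L\beta)\}$-type quantities once $\epsilon_{k,0}$ is small, shows the ratio is eventually $\le 1/\epsilon_{k,0}$ — i.e. the ratio test is eventually \emph{always} satisfied along $N'$, so $\epsilon_{k,0}$ is reduced only finitely often. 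This contradicts $\epsilon_{k,0} \downarrow 0$, proving the existence of $\underline\epsilon$. The final sentence (the lower bound on $\eta_k$) then follows from Corollary~\ref{cor:uniform_descent_direc}: once $\epsilon_{k,0} = \underline\epsilon$ is constant, the line search selects some $\eta_k = \epsilon_{k,0}2^{-j}$; the first index $j$ at which $\epsilon_{k,0}2^{-j} \le 1/(2L\beta)$ already works, so $\eta_k \ge \epsilon_{k,i_k} \ge \min\{\underline\epsilon, 1/(4L\beta)\}$.

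\textbf{Main obstacle.} The delicate point is making the summability/ratio-test argument and the error-bound argument cooperate: I must ensure the denominator $(\epsilon_{k,i_k}\|g^{k,i_k}\|)^{1/2}$ in the ratio test does not degenerate — this requires a \emph{uniform} lower bound on $\epsilon_{k,i_k}$, which I get from Corollary~\ref{cor:uniform_descent_direc} (the inner loop stops as soon as $\epsilon_{k,i} \le 1/(2L\beta)$, so $\epsilon_{k,i_k} \ge \min\{\epsilon_{k,0}, 1/(4L\beta)\}$) — and simultaneously a lower bound on $\|g^{k,i_k}\|$ relative to $\|\tilde g^k\|$, which is exactly what the two-sided application of the error bound to the two different regularization parameters provides. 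Keeping track of which quantities are bounded uniformly in $\epsilon$ versus only along subsequences, and confirming that the accumulation point picked up along $N'$ is indeed a metrically subregular stationary point, is where the care is needed.
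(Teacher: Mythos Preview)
Your overall architecture is right --- argue by contradiction, pass to the subsequence where the ratio test fails, locate an accumulation point in $(\partial f)^{-1}(0)\cap X$, and invoke metric subregularity --- but the ``decisive step'' has a real gap. Lemma~\ref{lem:from_metric_subregular_to_error_bound} is a \emph{one-sided} bound: it says $\dist\big(x,(\partial f)^{-1}(0)\big)\le C(\epsilon)\,\|G(x,\epsilon)\|$. Applying it at both $\epsilon=\tilde\epsilon_k$ and $\epsilon=\epsilon_{k,i_k}$ gives you two \emph{lower} bounds, $\|\tilde g^k\|\ge c\,d_k$ and $\|g^{k,i_k}\|\ge c\,d_k$ with $d_k:=\dist\big(x^k,(\partial f)^{-1}(0)\big)$. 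But to show the ratio $\tilde\epsilon_k\|\tilde g^k\|/(\epsilon_{k,i_k}\|g^{k,i_k}\|)^{1/2}$ is eventually $\le 1/\epsilon_{k,0}$ you need an \emph{upper} bound on the numerator; two lower bounds cannot cap a ratio from above (the $d_k$'s do not cancel). So the sentence ``which is exactly what the two-sided application of the error bound \ldots\ provides'' is where the argument breaks.

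The paper supplies the missing inequality by a separate stability result for the prox-linear map $z^{\epsilon}(x)$: uniformly over $\epsilon$ in a bounded interval and over $x$ near $(\partial f)^{-1}(0)$, one has $\|z^{\epsilon}(x)-x\|\le D\,\dist\big(x,(\partial f)^{-1}(0)\big)^{1/2}$. Since $\tilde\epsilon_k\|\tilde g^k\|=\|z^{\tilde\epsilon_k}(x^k)-x^k\|$, this yields $\tilde\epsilon_k\|\tilde g^k\|\le D\,d_k^{1/2}$. Combining with the error bound at $\epsilon_{k,i_k}$, namely $d_k\le C'\|g^{k,i_k}\|$, gives $\tilde\epsilon_k\|\tilde g^k\|/\|g^{k,i_k}\|^{1/2}$ bounded along the subsequence; meanwhile $(\epsilon_{k,i_k})^{1/2}/\epsilon_{k,0}\to+\infty$ (using $\epsilon_{k,i_k}\ge\min\{\epsilon_{k,0},1/(4L\beta)\}$ and $\epsilon_{k,0}\downarrow 0$), contradicting the ratio-test failure. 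Note also that the summability machinery you imported from Theorem~\ref{thm:convergence_alg2} plays no role here and can be dropped.
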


To establish local linear convergence, we next state a key inequality from~\cite[Lemma 5.1]{drusvyatskiy2018error} that quantifies the decrease in the objective value after taking a step along the direction $-G(x, \epsilon)$.
\begin{lemma}
\label{lem:Gradient_ineq}
    Consider nonsmooth composite problem \eqref{eq:cvx_composite} with $G$ defined in \eqref{eq:G_cvx-composite}. For any $x, y \in \R^n$ and $\epsilon > 0$, we have
    \begin{equation}\label{eq:Gradient_ineq}
        f(y) \geq f\big(x - \epsilon \, G(x, \epsilon)\big) + \langle G(x, \epsilon), y - x \rangle + \frac{\epsilon}{2}(2 - L \beta \epsilon) \|G(x, \epsilon)\|^{2}- \frac{L\beta}{2} \|x - y\|^{2}.
    \end{equation}
\end{lemma}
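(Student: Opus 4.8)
The plan is to obtain \eqref{eq:Gradient_ineq} directly from the prox-linear description of the step, leveraging the identification already established earlier. By Proposition~\ref{prop:equivalent_prox-linear}, the point $x^{+}\triangleq x-\epsilon\,G(x,\epsilon)$ is the unique minimizer of the subproblem
\[
z \;\longmapsto\; \phi(z) \triangleq f(z;x) + \frac{1}{2\epsilon}\|z-x\|^{2}, \qquad f(z;x) \triangleq h\big(c(x)+\nabla c(x)^{\top}(z-x)\big),
\]
and $f(\cdot\,;x)$ is convex, being the composition of the convex function $h$ with an affine map. Hence $\phi$ is $\tfrac{1}{\epsilon}$-strongly convex, and the first step is to apply the standard strong-convexity bound at its minimizer against an arbitrary $y$:
\[
f(y;x) + \frac{1}{2\epsilon}\|y-x\|^{2} \;\geq\; f(x^{+};x) + \frac{1}{2\epsilon}\|x^{+}-x\|^{2} + \frac{1}{2\epsilon}\|y-x^{+}\|^{2}.
\]

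Second, I would pass from the linearized model $f(\cdot\,;x)$ to the true objective $f$ using the two-sided model inequality~\eqref{eq:two-sided-model}, which gives $f(y;x)\leq f(y)+\tfrac{L\beta}{2}\|y-x\|^{2}$ and $f(x^{+};x)\geq f(x^{+})-\tfrac{L\beta}{2}\|x^{+}-x\|^{2}$. Substituting these into the previous display and moving the two model-error terms to the appropriate sides leaves an inequality relating $f(y)$, $f(x^{+})$, and three squared-norm terms.

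Third — the purely computational step — I would expand those squared norms using $x-x^{+}=\epsilon\,G(x,\epsilon)$, so that $\|x^{+}-x\|^{2}=\epsilon^{2}\|G(x,\epsilon)\|^{2}$, and $y-x^{+}=(y-x)+\epsilon\,G(x,\epsilon)$, so that $\|y-x^{+}\|^{2}=\|y-x\|^{2}+2\epsilon\langle G(x,\epsilon),y-x\rangle+\epsilon^{2}\|G(x,\epsilon)\|^{2}$. After this substitution the two $\tfrac{1}{2\epsilon}\|y-x\|^{2}$ contributions cancel, the inner product $\langle G(x,\epsilon),y-x\rangle$ survives with coefficient $1$, the $\|G(x,\epsilon)\|^{2}$ coefficient collects to $\epsilon-\tfrac{L\beta\epsilon^{2}}{2}=\tfrac{\epsilon}{2}(2-L\beta\epsilon)$, and a single $-\tfrac{L\beta}{2}\|x-y\|^{2}$ term remains — which is exactly \eqref{eq:Gradient_ineq}. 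There is no genuine obstacle here: the only points demanding care are recognizing that the prox-linear subproblem is $\tfrac{1}{\epsilon}$-strongly convex and applying the two bounds of~\eqref{eq:two-sided-model} at the correct points with the correct signs; everything else is bookkeeping, and no restriction on the size of $\epsilon$ is needed. Since this is a uniform-in-$\epsilon$ restatement of~\cite[Lemma~5.1]{drusvyatskiy2018error}, one may alternatively just cite that reference together with Proposition~\ref{prop:equivalent_prox-linear}.
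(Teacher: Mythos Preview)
Your argument is correct and is exactly the standard proof of \cite[Lemma~5.1]{drusvyatskiy2018error}: strong convexity of the prox-linear subproblem at its minimizer, followed by the two applications of the two-sided model bound~\eqref{eq:two-sided-model} and the algebraic expansion you describe. The paper itself gives no proof for this lemma and simply cites that reference, so your final remark---that one may just invoke \cite[Lemma~5.1]{drusvyatskiy2018error} together with Proposition~\ref{prop:equivalent_prox-linear}---is precisely what the paper does.
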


We now impose an additional assumption ensuring a separation of the isocost surfaces of $f$ on the set of stationary points $(\partial f)^{-1}(0)$. This assumption is standard in the literature, and sufficient conditions for it to hold can be found in \cite{luo1993error}.

\begin{assumption}
\label{assump3:separate}
    There is a constant $\delta_0 > 0$ such that
    \[
        x \in (\partial f)^{-1}(0),\; y \in (\partial f)^{-1}(0),\; f(x) \neq f(y) \quad\Rightarrow\quad \|x - y\| \geq \delta_{0}.
    \]
\end{assumption}

Finally, we arrive at the main convergence result.
\begin{corollary}[Local linear convergence]
\label{cor:local_linear_converge}
    Consider problem \eqref{eq:cvx_composite} under Assumption \ref{assump3:separate}, and let $\{x^k\}$ be a sequence generated by Algorithm \ref{alg2} with $G$ defined in \eqref{eq:G_cvx-composite}. Suppose that the level set $X \triangleq \{x \in \R^{n}\mid f(x) \leq f(x^0)\}$ is compact, and $\partial f$ is metrically subregular at $(\bx, 0)$ for every $\bx \in (\partial f)^{-1}(0) \cap X$. Then for all sufficiently large $k$, the function values $f(x^k)$ and the iterates $x^k$ converge linearly.
\end{corollary}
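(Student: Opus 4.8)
The plan is to combine three facts that are already in place: (i) the stepsize eventually stabilizes at a constant $\underline\epsilon$ (Proposition~\ref{prop:constant_stepsize}), so in particular the effective stepsize $\eta_k$ is uniformly bounded below, say $\eta_k \geq \underline\eta \triangleq \min\{\underline\epsilon, 1/(4L\beta)\}$; (ii) the descent inequality guarantees $f(x^k)\downarrow$ and $\sum_k \eta_k\|g^{k,i_k}\|^2 < +\infty$, so $\|g^{k,i_k}\|\to 0$ and the whole sequence $\{x^k\}$ stays in the compact level set $X$; and (iii) the error bound of Lemma~\ref{lem:from_metric_subregular_to_error_bound}, valid uniformly in $\epsilon\in(0,\bar\epsilon)$ near any stationary point, together with Assumption~\ref{assump3:separate} which pins the iterates down to a single isocost surface asymptotically. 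First I would argue that $\{x^k\}$ converges. Since $\|x^{k+1}-x^k\| = \eta_k\|g^{k,i_k}\| \to 0$ and the accumulation points of $\{x^k\}$ all lie in $(\partial f)^{-1}(0)\cap X$ (by Theorem~\ref{thm:convergence_alg2}, noting that in this structured setting the relevant subsequence argument forces stationarity), Assumption~\ref{assump3:separate} implies that $f$ takes a single value $f^\star$ on the set of accumulation points; combined with $f(x^k)\downarrow f^\star$ and the step lengths going to zero, a standard argument shows $\{x^k\}$ converges to a single stationary point $x^\star$ with $f(x^\star)=f^\star$.

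Next, for all large $k$ the iterate $x^k$ lies in the ball $\B_\tau(x^\star)$ on which the error bound holds with $\epsilon=\underline\epsilon$, so
\[
    \dist\big(x^k,(\partial f)^{-1}(0)\big) \;\leq\; C\,\|G(x^k,\underline\epsilon)\|
\]
for a constant $C=(3L\beta\underline\epsilon+2)\kappa+2\underline\epsilon$. Because all nearby stationary points carry the same cost $f^\star$ (Assumption~\ref{assump3:separate} again, shrinking $\tau$ if necessary so that $\B_\tau(x^\star)\cap(\partial f)^{-1}(0)$ is contained in one isocost piece), letting $y$ be the projection of $x^k$ onto $(\partial f)^{-1}(0)$ gives $f(y)=f^\star$ and $\|x^k-y\|=\dist(x^k,(\partial f)^{-1}(0))$. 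Now apply Lemma~\ref{lem:Gradient_ineq} at $x=x^k$, $\epsilon=\underline\epsilon$ with this $y$: using $x^k-\underline\epsilon\,G(x^k,\underline\epsilon)=x^{k+1}$ (here I use that with the constant stepsize the prox-linear/subgradient-regularized update is exactly this, via Proposition~\ref{prop:equivalent_prox-linear} and the eventual equality $\eta_k=\underline\epsilon=\epsilon_{k,i_k}$ — I may need a short remark that for large $k$ the line search accepts $j=i_k$ with $\epsilon_{k,i_k}=\underline\epsilon$, which follows from Corollary~\ref{cor:uniform_descent_direc}), and Cauchy–Schwarz on $\langle G(x^k,\underline\epsilon), y-x^k\rangle$, one obtains an inequality of the form
\[
    f(x^{k+1}) - f^\star \;\leq\; \tfrac{1}{2\underline\epsilon}\,\dist\big(x^k,(\partial f)^{-1}(0)\big)^2 \;-\;\tfrac{\underline\epsilon}{2}(2-L\beta\underline\epsilon)\|G(x^k,\underline\epsilon)\|^2 + (\text{lower order}),
\]
and substituting the error bound $\dist(x^k,(\partial f)^{-1}(0)) \leq C\|G(x^k,\underline\epsilon)\|$ turns the whole right-hand side into $-c\,\|G(x^k,\underline\epsilon)\|^2$ for some $c>0$ once $\underline\epsilon$ is small enough (which it is: $\underline\epsilon\leq 1/(4L\beta)$), hence
\[
    f(x^{k+1})-f^\star \;\leq\; -c\,\|G(x^k,\underline\epsilon)\|^2 \;\leq\; -\tfrac{c}{C^2}\,\dist\big(x^k,(\partial f)^{-1}(0)\big)^2.
\]
Combining this with the reverse control $f(x^k)-f^\star \leq L\,\dist(x^k,(\partial f)^{-1}(0)) \cdot(\text{const})$ — actually better, with a quadratic upper bound $f(x^k)-f^\star \leq \tfrac{L\beta}{2}\dist(x^k,\cdot)^2 + L\|\nabla c\|\,\dist(x^k,\cdot)$, or more cleanly by relating $f(x^k)-f^\star$ to $\|G(x^k,\underline\epsilon)\|$ through the descent inequality summed as a tail — yields $f(x^{k+1})-f^\star \leq q\,(f(x^k)-f^\star)$ for some $q\in(0,1)$, i.e.\ Q-linear convergence of function values. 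Linear convergence of the iterates then follows because $\|x^{k+1}-x^k\|=\underline\epsilon\|G(x^k,\underline\epsilon)\| \leq \underline\epsilon\sqrt{(f(x^k)-f^\star)/c}$ decays geometrically, so the tail sums $\sum_{j\geq k}\|x^{j+1}-x^j\|$ form a geometric series bounding $\|x^k-x^\star\|$.

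The main obstacle I anticipate is the bookkeeping around Assumption~\ref{assump3:separate}: I must make sure that, once $x^k$ is close to $x^\star$, its nearest stationary point also lies on the isocost surface $\{f=f^\star\}$ so that $f(y)=f^\star$ can legitimately be plugged into Lemma~\ref{lem:Gradient_ineq}; this requires choosing $\tau < \delta_0/2$ so that $\B_\tau(x^\star)$ meets only the $f^\star$-component of $(\partial f)^{-1}(0)$, and checking that the projection of $x^k$ stays in this ball. A secondary technical point is verifying that for large $k$ the algorithm's accepted stepsize is exactly $\underline\epsilon$ (not a strictly smaller dyadic fraction chosen by the $\argmin$ in the line search), or else arguing that the conclusion still goes through with $\eta_k\in[\underline\eta,\underline\epsilon]$ by keeping $\eta_k$ symbolic in the Lemma~\ref{lem:Gradient_ineq} computation; I would take the second, more robust route, using $x^{k+1}=x^k-\eta_k G(x^k,\epsilon_{k,i_k})$ with both $\eta_k$ and $\epsilon_{k,i_k}$ bounded in $[\underline\eta,\underline\epsilon]$ and absorbing the resulting constants.
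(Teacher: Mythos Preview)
Your proposal contains a genuine gap at the central step. After applying Lemma~\ref{lem:Gradient_ineq} and the error bound, you claim the right-hand side becomes $-c\,\|G(x^k,\underline\epsilon)\|^2$ with $c>0$, but this is impossible: since $f(x^{k+1})\ge f^\star$, such an inequality would force $G=0$. Concretely, substituting $\dist(x^k,(\partial f)^{-1}(0))\le C\|G\|$ with $C=(3L\beta\underline\epsilon+2)\kappa+2\underline\epsilon$ produces a coefficient of the form $C+\tfrac{L\beta}{2}C^2-\tfrac{\epsilon}{2}(2-L\beta\epsilon)$ in front of $\|G\|^2$, and since $C\ge 2\kappa$ regardless of how small $\underline\epsilon$ is (and Proposition~\ref{prop:constant_stepsize} does \emph{not} guarantee $\underline\epsilon\le 1/(4L\beta)$, only that it is positive), this coefficient is positive, not negative. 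The correct route, which the paper takes, is to accept the bound $f(x^k-\epsilon_{k,i_k}g^{k,i_k})-f^\star\le M\|g^{k,i_k}\|^2$ with $M>0$ and then feed in the descent inequality $\|g^{k,i_k}\|^2\le (f(x^k)-f(x^{k+1}))/(\alpha\,\eta_k)$ to obtain $f(x^{k+1})-f^\star\le q\,(f(x^k)-f^\star)$ after rearranging; this is the bridge you only vaguely gesture at (``through the descent inequality summed as a tail'') without making it the main mechanism.

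There is a second, related gap. Lemma~\ref{lem:Gradient_ineq} bounds $f(x^k-\epsilon_{k,i_k}g^{k,i_k})$, not $f(x^{k+1})=f(x^k-\eta_k g^{k,i_k})$, and in general $\eta_k\neq\epsilon_{k,i_k}$. The paper closes this gap by exploiting the $\argmin$ in line~11 of Algorithm~\ref{alg2}: since $\epsilon_{k,i_k}$ always lies in the candidate set $\{\epsilon_{k,0}2^{-j},\dots,\epsilon_{k,0}2^{-i_k}\}$, the choice of $\eta_k$ ensures $f(x^{k+1})\le f(x^k-\epsilon_{k,i_k}g^{k,i_k})$. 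Your ``robust route'' of keeping $\eta_k\in[\underline\eta,\underline\epsilon]$ symbolic does not by itself supply this inequality, and Lipschitz-type patches would introduce an $O(\|g^{k,i_k}\|)$ term that destroys the quadratic bound needed for a linear rate. Finally, the preliminary claim that $\{x^k\}$ converges to a single point before the rate is established is not the ``standard argument'' you invoke; the paper instead works with an accumulation point $\bar x$, uses Assumption~\ref{assump3:separate} (with the neighborhood shrunk to radius $<\delta_0/2$, as you correctly anticipate) to get $f(\hat x^k)=f(\bar x)$, derives the contraction \eqref{eq:f_Q-linear} locally, and only then shows the iterates remain trapped in the neighborhood, which yields convergence a posteriori.
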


\begin{proof}
    By Proposition \ref{prop:constant_stepsize}, there exist $\underline k \in \N$ and $\underline\epsilon > 0$ such that $\epsilon_{k,0} = \underline\epsilon$ and $\eta_k \geq \epsilon_{k,i_k} \geq \underline\eta \triangleq \min\{\underline\epsilon, 1/(4L\beta)\}$ for all $k \geq \underline k$.
    Let $\bx$ be an accumulation point of $\{x^k\}$. By the continuity of $f$, $f(\bx)$ is an accumulation point of $\{f(x^k)\}$. Since $\{f(x^k)\}$ is non-increasing and bounded from below, it converges and $f(x^k) \to f(\bx)$.
    From the descent inequality
    \begin{equation}\label{eq:descent_ineq2}
        f(x^{k+1}) \leq f(x^k) - \alpha \, \eta_{k} \|g^{k, i_k}\|^{2} = f(x^k) - \frac{\alpha}{{\eta_k}} \|x^{k+1} - x^k\|^2
    \end{equation}
    and the lower bound $\eta_{k} \geq \underline\eta$, we obtain
    \[
        \|g^{k, i_k}\|^2 
        \leq {\frac{f(x^k) - f(x^{k+1})}{\alpha \, \eta_{k}}}
        \leq {\frac{f(x^k) - f(x^{k+1})}{\alpha \, \underline\eta} }\to 0.
    \]
    Thus, $G(x^k, \epsilon_{k,i_k}) = g^{k, i_k} \to 0$. By Lemma \ref{lem:subgrad-regularized_Affine_in_y}(b), $\bx$ is a stationary point. Since $\partial f$ is metrically subregular at $(\bx, 0)$, Lemma \ref{lem:from_metric_subregular_to_error_bound} guarantees the existence of positive scalars $\kappa_{\bx}$ and $\tau$ such that
    \begin{equation}\label{eq:Error_Bound}
        \dist\big(x, (\partial f)^{-1}(0)\big)
        \leq \big( (3L \beta \epsilon + 2) \kappa_{\bx}+ 2\epsilon \big) \cdot \|G(x, \epsilon)\|,
        \qquad\forall\, x \in \B_{\tau}(\bx),\, \epsilon > 0.
    \end{equation}
    Now apply the inequality \eqref{eq:Gradient_ineq} from Lemma \ref{lem:Gradient_ineq} with $x = x^k$, $\epsilon = \epsilon_{k,i_k}$, and $y = \hx^k$, where $\hx^k \in (\partial f)^{-1} (0)$ is a point such that $\|x^k- \hx^k\| = \dist\big(x^k, (\partial f)^{-1}(0)\big)$. For all $k \in \N$, we have
    \[
    \begin{array}{rl}
        & \quad f\big(x^k - \epsilon_{k,i_k} g^{k,i_k}\big) - f(\hx^k) \\
        & \displaystyle\leq -\big\langle g^{k, i_k}, \hx^k - x^k \big\rangle - \frac{\epsilon_{k,i_k}}{2} (2 - L \beta \epsilon_{k,i_k}) \|g^{k, i_k}\|^2 + \frac{L \beta}{2} \|x^k - \hx^k\|^2  \\[0.1in]
        & \displaystyle\leq \|g^{k, i_k}\|^2 \left( \frac{\|x^k - \hx^k\|}{\|g^{k, i_k}\|} - \frac{\epsilon_{k,i_k}}{2} (2 - L \beta \epsilon_{k,i_k}) + \frac{L \beta}{2} \frac{\|x^k - \hx^k\|^{2}}{\|g^{k, i_k}\|^{2}} \right).
    \end{array}
    \]
    Define $\gamma \triangleq \max\{1, (3L\beta \underline\epsilon + 2) \kappa_{\bx}+ 2 \underline\epsilon \}$. Consider an iterate $x^k$ such that $k \geq \underline k$ and $\|x^k - \bx\| < \min\{\tau, \delta_0/2\}$. The error bound \eqref{eq:Error_Bound}, combined with the fact $\epsilon_{k,i_k} \leq \underline\epsilon$, implies that $\|x^k - \hx^k\| \leq \gamma \|g^{k,i_k}\|$. Furthermore, noting that $f(x^{k+1}) \leq f(x^k - \epsilon_{k,i_k} g^{k,i_k})$ due to line 11 of Algorithm \ref{alg2}, we obtain
    \[
    \begin{array}{rl}
         f(x^{k+1}) - f(\hx^k)
         &\;\displaystyle\leq\, \|g^{k, i_k}\|^2 \left( \left(1 + {L\beta}/{2}\right) \gamma^2 - \frac{\epsilon_{k,i_k}}{2} (2 - L \beta \epsilon_{k,i_k}) \right)  \\[0.15in]
         &\displaystyle\overset{\eqref{eq:descent_ineq2}}{\leq} \frac{f(x^k) - f(x^{k+1})}{\alpha \, \eta_{k}} \left( \left(1 + {L\beta}/{2}\right) \gamma^2 - \frac{\epsilon_{k,i_k}}{2} (2 - L \beta \epsilon_{k,i_k}) \right)  \\[0.15in]
         &\;\displaystyle\leq \big(f(x^k) - f(x^{k+1})\big) \left( \left(1 + {L\beta}/{2}\right) \frac{\gamma^{2}}{\alpha \underline\eta} - \frac{2 - L \beta \underline\epsilon}{2 \alpha} \right).
    \end{array}
    \]
    The last inequality uses the bounds $\eta_{k} \geq \epsilon_{k,i_k} \geq \underline\eta$ and $\epsilon_{k,i_k} \leq \epsilon_{k,0} = \underline\epsilon$ for $k \geq \underline k$.
    Rearranging terms, we arrive at the recursion:
    \[
        f(x^{k+1}) - f(\hx^k) \leq q \,\big(f(x^k) - f(\hx^k)\big),
    \] 
    where
    \[
        q \triangleq 1 - \left({(1+{L\beta}/{2}) \frac{\gamma^2}{\alpha \, \underline\eta} - \frac{2-L\beta \underline\epsilon}{2 \alpha} + 1}\right)^{-1} \in (0,1).
    \]
    Observe that $\|\hx^k - \bx\| \leq \|\hx^k - x^k\| + \|x^k - \bx\| \leq 2\|x^k - \bx\| < \delta_0$. Since both $\hx^k$ and $\bx$ are stationary points, Assumption \ref{assump3:separate} implies that $f(\hx^k) = f(\bx)$. Consequently, the recursion simplifies to, for all $k \geq \underline k$ such that $x^k \in \B_{\min\{\tau, \delta_0/2\}}(\bx)$,
    \begin{equation}\label{eq:f_Q-linear}
        f(x^{k+1}) - f(\bx) \leq q \,\big(f(x^k) - f(\bx)\big).
    \end{equation} 
    We next show that once $x^k$ is sufficiently close to $\bx$, all subsequent iterates remain within the neighborhood. Let $k_{1} \geq \underline k$ be such that $x^{k_1}\in \B_{\min\{\tau, \delta_0/2\}}(\bx)$. Suppose, for the sake of contradiction, that a later iterate exits this neighborhood. Then there exists $k_{2}\geq k_{1} + 1$ as the smallest index such that $x^{k_2}\notin \B_{\min\{\tau, \delta_0/2\}}(\bx)$. We can estimate the distance between $x^{k_1}$ and $x^{k_2}$ as follows:
    \[
    \begin{array}{rl}
        \|x^{k_2} - x^{k_1}\|
        & \displaystyle\leq \sum^{k_2 - 1}_{k=k_1} \|x^{k+1} - x^k\|
        \;\overset{\eqref{eq:descent_ineq2}}{\leq}\; \sum^{k_2 - 1}_{k=k_1} \sqrt{\eta_{k} \big(f(x^{k+1}) - f(x^k)\big) / \alpha} \\
        & \displaystyle\overset{\eqref{eq:f_Q-linear}}{\leq} \sqrt{\underline\epsilon \,\big(f(x^{k_1+1}) - f(\bx)\big) / \alpha} \,\sum^{k_2 - 1}_{k=k_1} q^{(k - k_1)/2} \,,
    \end{array}
    \]
    which can be made arbitrarily small by choosing $k_1$ sufficiently large, implying that $x^{k_2} \in \B_{\min\{\tau, \delta_0/2\}}(\bx)$, a contradiction. Thus, there exists an index $r$ such that $x^k \in \B_{\min\{\tau, \delta_0/2\}}(\bx)$ for all $k \geq r$. Consequently, the geometric decay in \eqref{eq:f_Q-linear} holds for all $k \geq r$, and the function values $f(x^k)$ asymptotically converge linearly. 
    Finally, the inequalities \eqref{eq:descent_ineq2} and \eqref{eq:f_Q-linear}, combined with $\|x^k - \bx\| \leq \sum^{+\infty}_{i=k} \|x^i - x^{i+1}\|$, allow us to conclude that the iterates $x^k$ converge linearly to $\bx$ for $k \geq r$.
\end{proof}

\section{Numerical Experiments}
\label{sec6:nuericals}

In this section, we evaluate the performance of our algorithms on several classes of nonsmooth functions. All experiments were conducted in \textsc{Matlab} 2023b on a desktop equipped with an Intel Core i7-13700 CPU and 32 GB of RAM. The code is available at \url{https://github.com/lhyoung99/subgradient-regularization}.
For each test example, we consider two types of oracles:

{\bf Oracle 1:} Given $x$, return the objective $f(x)$ and a subgradient $v \in \partial f(x)$.

{\bf Oracle 2:} Given $x$ and $\epsilon > 0$, return the objective $f(x)$ and a descent-oriented subgradient $g \in G(x, \epsilon)$.

\vspace{0.05in}
\noindent The descent-oriented subdifferential $G$ in Oracle 2 is constructed by solving a subgradient-regularized problem, which reduces to a convex quadratic program (see Section \ref{sec4:subgrad-regularization} for details). Among several solvers tested, \textsc{Matlab}'s built-in function \textit{quadprog}  was found to be the most efficient for these subproblems.

\vspace{0.05in}
\noindent 
When Algorithms \ref{alg1} and \ref{alg2} are equipped with the subgradient regularization to generate $G(x,\epsilon)$, we call them SRDescent and SRDescent-adapt, respectively. 
We summarize the parameter setting used in these two algorithms.

\vspace{0.05in}
{\em Regularization parameters}. We set the initial regularization parameter to $\epsilon_{0,0} = 5$ and apply a reduction factor $\theta_{\epsilon} = 0.9$. In Algorithm \ref{alg2}, we use the non-summable sequence $\{a_t = 1/t\}$ for the regularization parameter $\tilde\epsilon_k$.

{\em Stationary tolerance parameters}. The initial target for the stationarity measure is set to $\nu_0 = 10^{-2}$, with a reduction factor $\theta_{\nu} = 0.5$.

{\em Line search parameter}. We use $\alpha = 10^{-4}$ as the Armijo parameter.

\vspace{0.1in}
\noindent For comparison, we consider the following algorithms, all relying on Oracle 1:

\vspace{0.05in}
\underline{Polyak}: The subgradient method with Polyak stepsizes that iterates as
\[
    x^{k+1} = x^k - \frac{f(x^k) - f^\ast}{\|g^k\|^2} g^k \quad\text{for some $g^k \in \partial f(x^k)$},
\]
where $f^*$ is the optimal objective value.

\vspace{0.05in}
\underline{\textsc{PBMDC}}: A proximal bundle method for difference-of-convex functions~\cite{de2019proximal},  available at \url{https://www.oliveira.mat.br/solvers}.

\vspace{0.05in}
\underline{NTDescent}: Originally implemented in PyTorch~\url{https://github.com/COR-OPT/ntd.py}, we re-implemented this method in \textsc{Matlab} to ensure fair runtime comparison.   Across several test cases under the same termination conditions, the \textsc{Matlab} version matches the iteration count of the original implementation while being faster in CPU time.

\vspace{0.05in}
\underline{\textsc{HANSO}}: A hybrid nonsmooth optimization algorithm combining BFGS~\cite{lewis2013nonsmooth} with gradient sampling (\textsc{GS})~\cite{burke2005robust}, available at \url{https://cs.nyu.edu/~overton/software/hanso/}. We separately tested \textsc{GS} as an implementable Goldstein-type method with convergence guarantees, and BFGS as a heuristic known for good empirical performance. In many examples, GS suffers from the line search failures, which, as noted in \cite{burke2005robust}, occur when either the maximum number of backtracking steps (set to 50) is reached or the computed direction is not a descent direction.

\vspace{0.05in}
\noindent Unless otherwise specified, all parameters were set to their default values in these algorithms. Performance is evaluated primarily in terms of CPU time and the number of calls to Oracle 1 and Oracle 2, respectively.

\subsection{Finite max of convex quadratic functions}
\label{sec6.1:Max_of_cvx_quad}
We first test our algorithms
on the finite max of convex quadratic functions:
\[
    \Min_{x \in \R^n} \left\{ f(x) = \max_{1 \leq i \leq m} \left(g_i^\top x + \frac{1}{2} x^\top H_i \, x\right) \right\},
\]
where each $H_i \in \R^{n \times n}$ is a randomly generated positive semi-definite matrix, and $\{g_i\}^{m}_{i=1}$ are random vectors satisfying $\sum^{\lfloor m/2 \rfloor}_{i=1} {\lambda_i} \, g_i = 0$ and $\sum^{m}_{j=1} \mu_j \, g_j = 0$ for some vector $\lambda \in \R^{\lfloor m/2 \rfloor}$ with $\lambda > 0$ and $\sum^{\lfloor m/2 \rfloor}_{i=1} \lambda_i = 1$, and $\mu \in \R^m$ with $\mu \neq 0$ and $\sum^m_{j=1} \mu_j = 0$. Under this setup, $x^\ast = 0$ is the unique minimizer, and all assumptions in Corollary \ref{cor:local_linear_converge} hold. Thus, by Corollary \ref{cor:local_linear_converge}, SRDescent-adapt is expected to converge linearly. Meanwhile, the affine independence and non-degeneracy requirements in~\cite{helou2017local,davis2024local,kong2024lipschitz} do not hold here, so NTDescent and GS lack theoretical linear convergence guarantees for these instances.

\begin{figure}[!ht]
    \centering
    \begin{minipage}[b]{0.24\textwidth}
        \centering
        \scriptsize{(a) $(n, m) = (200, 10)$} \\[0.5em]
        \includegraphics[width=1\textwidth]{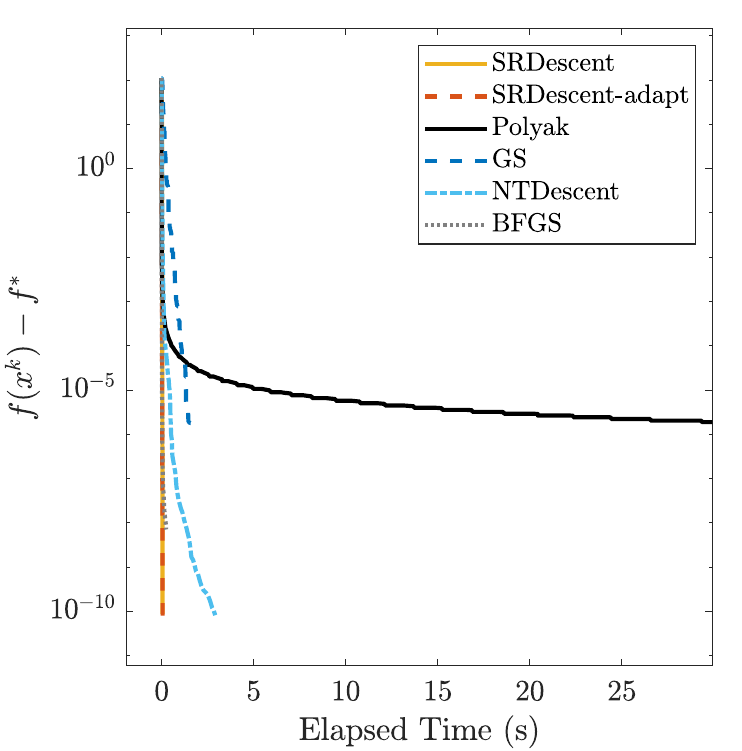}\\
        \includegraphics[width=1\textwidth]{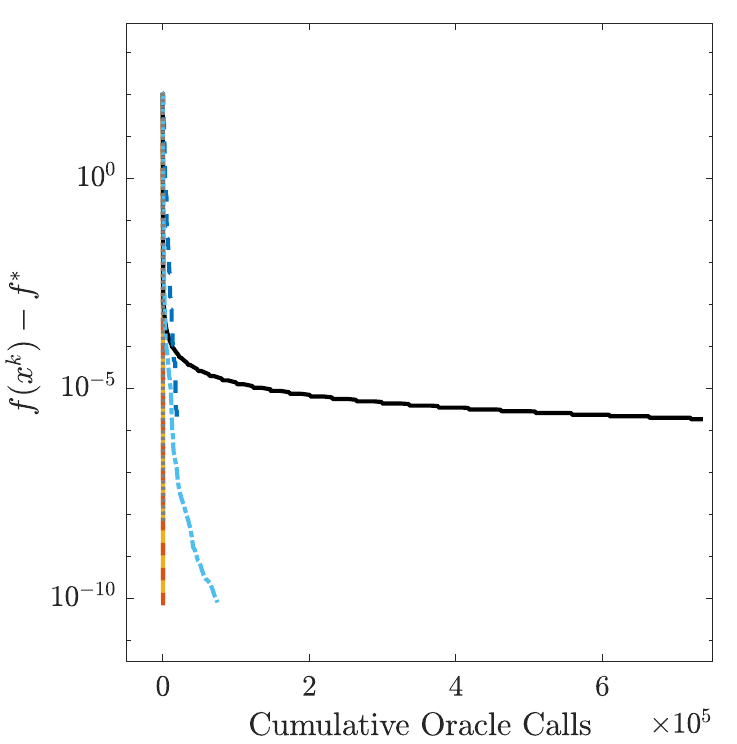}
    \end{minipage}
    \begin{minipage}[b]{0.24\textwidth}
        \centering
        \scriptsize{(b) $(n, m) = (200, 50)$} \\[0.5em]
        \includegraphics[width=1\textwidth]{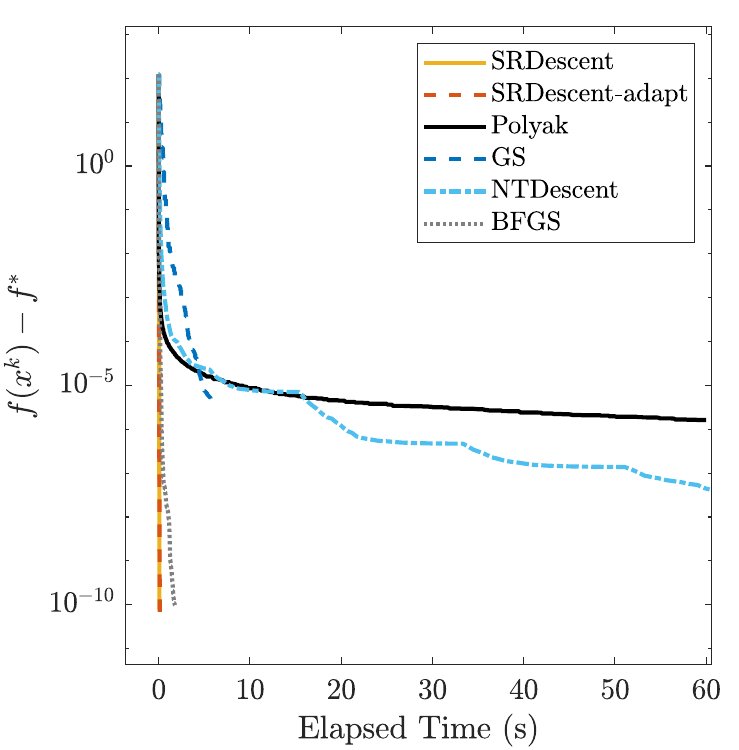}\\
        \includegraphics[width=1\textwidth]{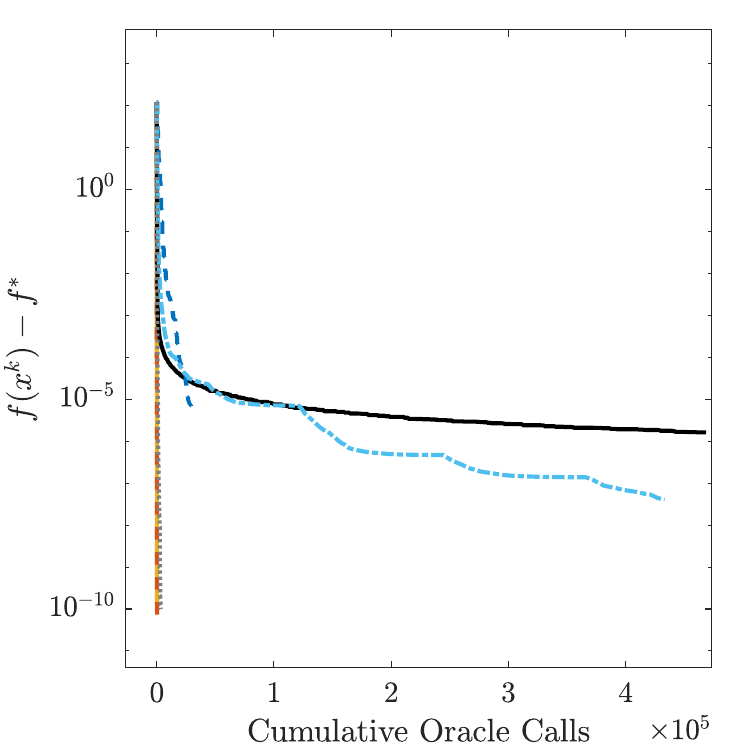}
    \end{minipage}
    \begin{minipage}[b]{0.24\textwidth}
        \centering
        \scriptsize{(c) $(n, m) = (200, 100)$} \\[0.5em]
        \includegraphics[width=1\textwidth]{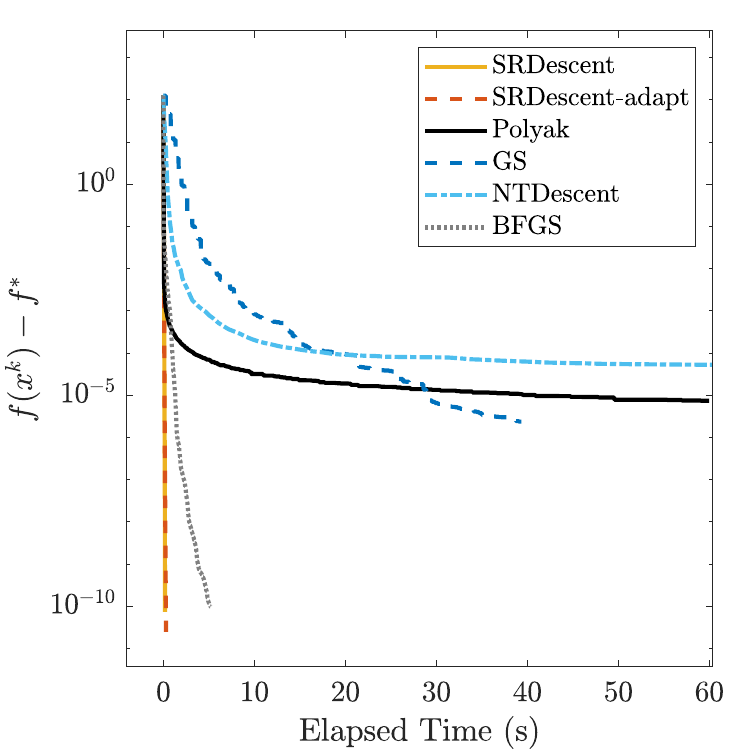}\\
        \includegraphics[width=1\textwidth]{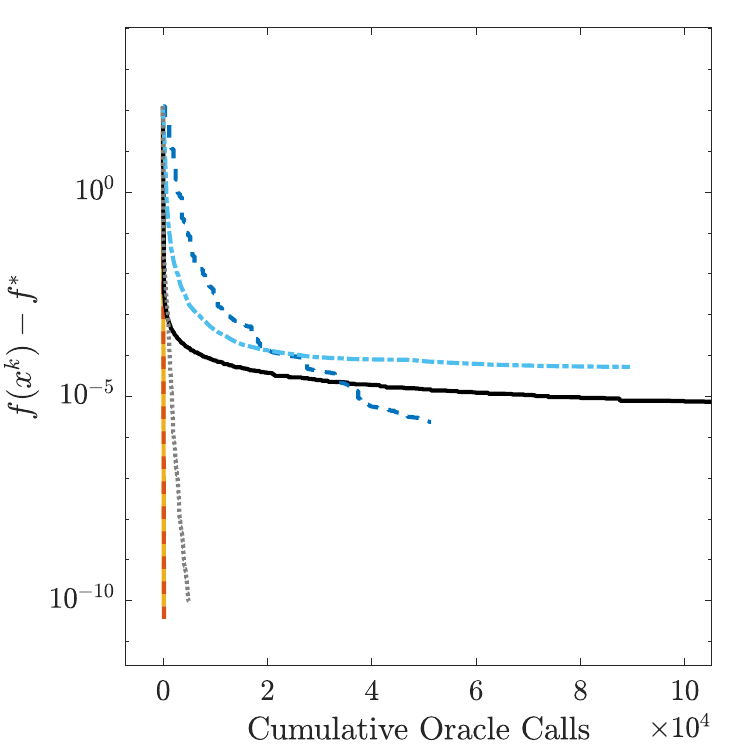}
    \end{minipage}
    \begin{minipage}[b]{0.24\textwidth}
        \centering
        \scriptsize{(d) $(n, m) = (200, 200)$} \\[0.5em]
        \includegraphics[width=1\textwidth]{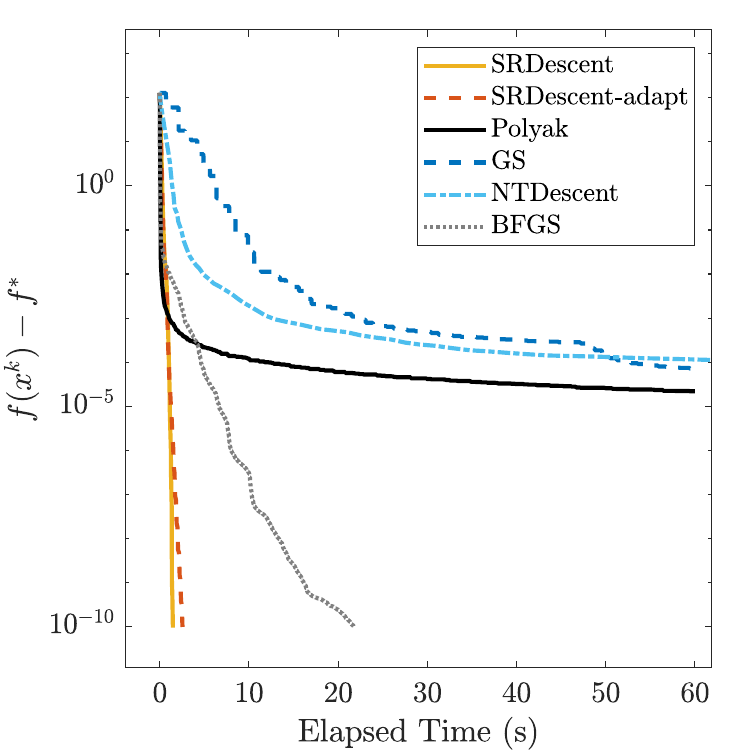}\\
        \includegraphics[width=1\textwidth]{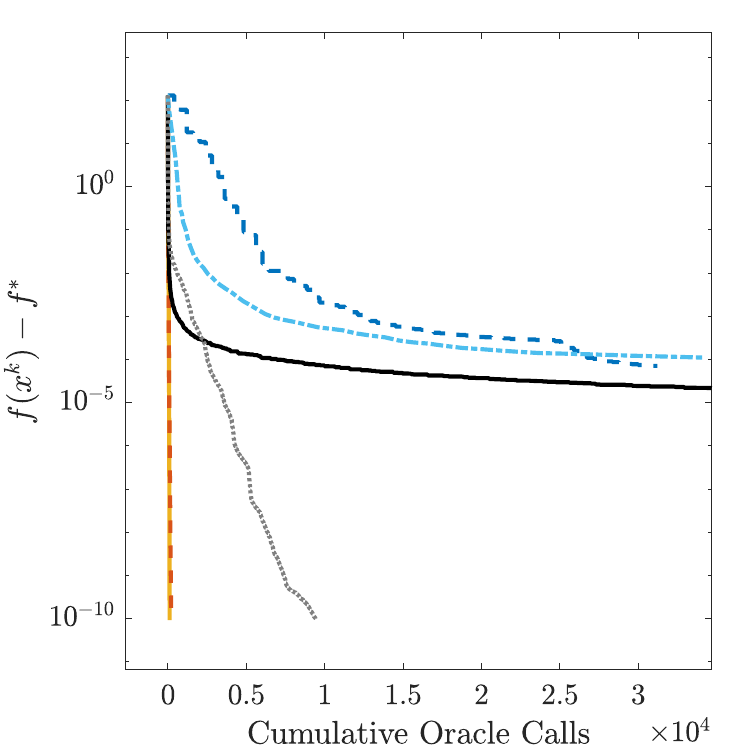}
    \end{minipage}
    \caption{\small Performance on the max of convex quadratics functions with $n=200$ and $m \in \{10, 50, 100, 200\}$, all initialized from the same random points. For the Polyak method, we plot the best objective value after $k$ oracle calls, i.e., $\min_{1 \leq i \leq k} f(x^k)$ rather than $f(x^k)$. In cases $m \in \{10, 50, 100\}$, GS terminates early due to the line search failures.}
    \label{fig:max_of_smooth_vary_nb_functions}
\end{figure}

We generate test problems by fixing $n = 200$ and varying the number of convex quadratic functions $m \in \{10, 50, 100, 200\}$. Figure \ref{fig:max_of_smooth_vary_nb_functions} reports the results. We excluded PBMDC because of its unstable behavior, converging quickly to moderate accuracy but significantly slow afterward; for $(n, m) = (200, 50)$, it already becomes slow near $10^{-7}$ and behaves worse as $m$ increases. Polyak method converges sublinearly, serving as a baseline. Both SRDescent and SRDescent-adapt exhibit linear convergence and outperform other methods, including BFGS, for larger $m$. Goldstein-type methods struggle when $m$ grows, possibly due to the bottleneck of sampling more pieces of functions to approximate the Goldstein direction. In addition, GS encounters numerical issues in several cases due to the line search failures, making it less reliable for producing high quality solutions.

\subsection{Finite max of nonconvex quadratic functions}

We consider Nesterov's nonsmooth Chebyshev-Rosenbrock function:
\begin{equation}\label{eq:Nesterov_nonsmooth}
    \Min_{x \in \R^n} \left\{f(x) = \frac{1}{4}(x_{1}- 1)^{2} + \sum^{n-1}_{i=1} \left|x_{i+1}- 2 (x_i)^{2} + 1\right| \right\},
\end{equation}
which is known for its challenging landscape~\cite{gurbuzbalaban2012nesterov}. This nonconvex function has a unique global minimizer $x^\ast = (1, 1, \cdots, 1 )$, which is also the only stationary point~\cite[Theorem 1]{gurbuzbalaban2012nesterov}. Although $f$ is partly smooth with respect to the manifold $\mathcal{M}\triangleq \big\{x \mid x_{i+1} - 2 (x_{i})^2 + 1 = 0, i = 1, \cdots, n-1 \big\}$, it is not an active manifold around $x^\ast$ in the sense of~\cite{davis2024local} and therefore, it is not clear whether NTDescent locally converges nearly linearly for this problem.

As numerically shown in~\cite{gurbuzbalaban2012nesterov}, this problem becomes increasingly difficult as $n$ increases. For large $n$, methods like BFGS result in low accuracy because line search breaks down near $\mathcal{M}$ but not close enough to $\bx$.
In our experiments, each algorithm is run until it either achieves an objective gap $f(x^k) - f^\ast$ below an accuracy of $10^{-2}$ or $10^{-5}$, or exceeds a time limit of $1,000$ seconds. For each dimension $n$, we test $10$ random initial points and consider any run that does not meet the target accuracy as a failure. Table \ref{tab:nesterov_nonsmooth_comparison1} summarizes the detailed results. When $n=8$ and $10$, all methods except SRDescent and SRDescent-adapt frequently fail to reach the high accuracy within the time limit, while our methods are more robust against the challenges posed by this nonsmooth landscape. Additionally, we can see from Table \ref{tab:nesterov_nonsmooth_comparison1} that SRDescent-adapt consistently converges faster than SRDescent, benefiting from the adaptive adjustment of the regularization parameter $\epsilon_{k,0}$.

\begin{table}[!ht]
    \centering
    \resizebox{0.8\textwidth}{!}
    {
    \begin{tabular}{|c|c||cccc||cccc||}
        \hline
        \multirow{2}{*}{$n$}   & \multirow{2}{*}{method} & \multicolumn{4}{c||}{low accuracy 1e-2} & \multicolumn{4}{c||}{high accuracy 1e-5} \\
        \cline{3-6}\cline{7-10} & 
        & \#fails  & time (s)  & $f(x^k)$  &  oracle calls
        & \#fails  & time (s)  & $f(x^k)$  &  oracle calls \\
        \hline\hline
        \multirow{6}{*}{3}
        
        & PBMDC      & 0   & 5.6e-2           & 7.9e-3           & 6.4e+1
                     & 0   & 6.0e-1           & 9.9e-6           & 4.6e+2 \\
        & GS         & 0   & 1.1e-2           & 8.8e-3           & 5.5e+2
                     & 0   & 1.5e-1           & 9.9e-6           & 1.6e+4 \\
        & NTDescent       & 0   & 5.7e-3           & 8.4e-3           & 2.5e+3
                     & 0   & 2.4e-2           & 8.2e-6           & 3.1e+4 \\
        & BFGS      & 0   & 7.7e-3           & 9.6e-3           & 4.0e+2
                     & 0   & 5.3e-2           & 9.9e-6           & 2.9e+3 \\
        & SRDescent      & 0   & 6.1e-3           & 7.3e-3           & 4.9e+1
                     & 0   & 5.4e-2           & 1.0e-5           & 7.2e+2 \\
        & SRDescent-adapt  & 0 & 8.2e-3 & 7.8e-3 & 1.0e+2
                     & 0 & 5.1e-2 & 9.9e-6 & 7.1e+2 \\
        \hline\hline
        \multirow{11}{*}{5}
   
        & \multirow{2}{*}{PBMDC}      & \multirow{2}{*}{0}   & \multirow{2}{*}{1.6e+0}           & \multirow{2}{*}{8.0e-3}           & \multirow{2}{*}{1.4e+3}
                     & \multirow{2}{*}{10}  & --      & --    & --      \\
        & & & & & & &  (1.0e+3) &  (1.9e-4) & (5.1e+4) \\ 
        \cdashline{2-10}
        & \multirow{2}{*}{GS}         & \multirow{2}{*}{0}   & \multirow{2}{*}{1.1e-1}           & \multirow{2}{*}{9.8e-3}           & \multirow{2}{*}{1.5e+4}
        & \multirow{2}{*}{6}   & 6.6e+0   & 1.0e-5   & 9.8e+5  \\
                     
        & & & & & & &  (4.9e+0) &  (1.6e-5) & (7.1e+5) \\
        \cdashline{2-10}

        & \multirow{2}{*}{NTDescent}      & \multirow{2}{*}{0}   & \multirow{2}{*}{1.0e-1}           & \multirow{2}{*}{9.4e-3}           & \multirow{2}{*}{1.7e+5}
                     & \multirow{2}{*}{0}   & \multirow{2}{*}{1.1e+2}           & \multirow{2}{*}{9.5e-6}           & \multirow{2}{*}{2.0e+8} \\[0.12in]
        \cdashline{2-10}
        & \multirow{2}{*}{BFGS}      & \multirow{2}{*}{1}   & 3.5e-1   & 8.9e-3   & 1.6e+4 
                     & \multirow{2}{*}{10}  & --      & --     & --     \\

& & & (5.3e-1) & (1.4e-1) & (2.4e+4) & &  (2.3e+0) &   (1.7e-2) & (8.2e+4)  \\ 
        \cdashline{2-10}
        & \multirow{2}{*}{SRDescent}      & \multirow{2}{*}{0}   & \multirow{2}{*}{3.5e-2}           & \multirow{2}{*}{7.9e-3}           & \multirow{2}{*}{5.5e+2}
                     & \multirow{2}{*}{0}   & \multirow{2}{*}{1.5e+0}           & \multirow{2}{*}{9.1e-6}           & \multirow{2}{*}{2.4e+4} \\[0.12in]
        \cdashline{2-10}
        & \multirow{2}{*}{SRDescent-adapt}  & \multirow{2}{*}{0}   & \multirow{2}{*}{5.1e-2}           & \multirow{2}{*}{7.9e-3}           & \multirow{2}{*}{8.5e+2}
                     & \multirow{2}{*}{0}   & \multirow{2}{*}{2.4e+0}           & \multirow{2}{*}{9.1e-6}           & \multirow{2}{*}{4.7e+4} \\[0.12in]
        \hline\hline
        \multirow{12}{*}{8}
        & \multirow{2}{*}{PBMDC}      & \multirow{2}{*}{8}   & 2.4e-2  & 7.5e-3   & 3.6e+1 
                     & \multirow{2}{*}{10}  & --       & --      & --   \\
& & & (1.0e+3)  & (1.6e-2) & (2.9e+5) & &  (1.0e+3) &    (1.4e-2) & (2.4e+5) \\ 
\cdashline{2-10}

        & \multirow{2}{*}{GS}         & \multirow{2}{*}{0}   & \multirow{2}{*}{9.9e+0}           & \multirow{2}{*}{9.7e-3}           & \multirow{2}{*}{1.7e+6}
                     & \multirow{2}{*}{10}  & --       & --     & --      \\

& & & & & & &  (5.5e+1) &    (9.2e-4) & (8.1e+6) \\ 
\cdashline{2-10}
                     
        & \multirow{2}{*}{NTDescent}       & \multirow{2}{*}{0}   & \multirow{2}{*}{2.6e+2}           & \multirow{2}{*}{9.2e-3}           & \multirow{2}{*}{4.6e+8} 
                     & \multirow{2}{*}{9}   & 1.5e-2   & 9.9e-6   & 1.6e+4  \\

& & & & & & &  (1.0e+3) &  (2.6e-3) & (1.7e+9) \\ 
\cdashline{2-10}

& \multirow{2}{*}{BFGS}      & \multirow{2}{*}{6}   & 1.2e-2   & 7.9e-3   & 7.5e+1 
                     & \multirow{2}{*}{10}  & --       & --      & --      \\

& & & (8.9e+0) & (2.8e-1) & (2.6e+5) & &  (6.8e+0) &   (1.7e-1) & (1.8e+5) \\ 
        \cdashline{2-10}
        & \multirow{2}{*}{SRDescent}      & \multirow{2}{*}{0}   & \multirow{2}{*}{1.7e+0}           & \multirow{2}{*}{7.4e-3}           & \multirow{2}{*}{2.6e+4}
                     & \multirow{2}{*}{0}   & \multirow{2}{*}{1.1e+2}           & \multirow{2}{*}{9.2e-6}           & \multirow{2}{*}{1.7e+6} \\[0.12in]

                     \cdashline{2-10}
        & \multirow{2}{*}{SRDescent-adapt}  & \multirow{2}{*}{0}   & \multirow{2}{*}{2.2e+0}           & \multirow{2}{*}{7.4e-3}           & \multirow{2}{*}{4.0e+4}
                     & \multirow{2}{*}{0}   & \multirow{2}{*}{4.0e+1}          & \multirow{2}{*}{9.2e-6}       & \multirow{2}{*}{6.2e+5} \\[0.12in]
        \hline\hline
        \multirow{12}{*}{10}
        & \multirow{2}{*}{PBMDC}      & \multirow{2}{*}{10}  & --       & --      & --    
                     & \multirow{2}{*}{10}  & --       & --       & --     \\

& & & (1.0e+3) &  (1.3e-1) &  (6.5e+4) & &  (1.0e+3) &  (1.3e-1) & (6.4e+4)  \\ 
\cdashline{2-10}

        & \multirow{2}{*}{GS}         & \multirow{2}{*}{7}   & 6.6e+1   & 9.1e-3   & 1.2e+7 
                     & \multirow{2}{*}{10}  & --       & --       & --     \\

& & & (1.1e+2) & (1.9e-2) &  (1.9e+7) & & (1.0e+2) &  (1.5e-2) &  (1.8e+7)  \\ 
\cdashline{2-10}

& \multirow{2}{*}{NTDescent}       & \multirow{2}{*}{7}   & 1.2e-2   & 8.6e-3  & 3.3e+3 
                     & \multirow{2}{*}{10}  & --       & --     & --      \\

& & & (1.0e+3) &  (4.2e-1)  &  (1.7e+9) & &  (1.0e+3) &    (3.0e-1) & (1.7e+9)  \\ 
\cdashline{2-10}

        & \multirow{2}{*}{BFGS}      & \multirow{2}{*}{6}   & 5.5e-3   & 9.1e-3   & 1.0e+2 
                     & \multirow{2}{*}{10}  & --       & --      & --    \\

& & & (1.0e+1) &  (3.0e-1)  &  (2.5e+5) & &  (7.6e+0)  &     (1.8e-1) & (1.8e+5)  \\ 
\cdashline{2-10}
                     
& \multirow{2}{*}{SRDescent}      & \multirow{2}{*}{0}   & \multirow{2}{*}{3.0e+1}           & \multirow{2}{*}{7.8e-3}           & \multirow{2}{*}{4.7e+5}
                     & \multirow{2}{*}{8}   & 4.0e+2   & 6.6e-6   & 6.1e+6  \\

& & & & & & & (1.0e+3) &  (5.5e-5) & (1.5e+7) \\ 
\cdashline{2-10}

& \multirow{2}{*}{SRDescent-adapt}  & \multirow{2}{*}{0}   & \multirow{2}{*}{2.4e+1}           & \multirow{2}{*}{7.8e-3}           & \multirow{2}{*}{4.1e+5}
                     & \multirow{2}{*}{0}   & \multirow{2}{*}{6.4e+2}  & \multirow{2}{*}{9.3e-6}  & \multirow{2}{*}{9.5e+6} \\[0.12in]
        \hline
    \end{tabular}%
    }
    \caption{\small Comparison on Nesterov's nonsmooth Chebyshev-Rosenbrock function \eqref{eq:Nesterov_nonsmooth} over $10$ random initial points. The column ``\#fails" counts the number of runs (out of $10$) that do not reach the target accuracy within $1,000$ seconds, or terminate early due to the line search failures. For successful runs, the average runtime, final objective values, and number of oracle calls are reported without parentheses; parenthetical values represent averages over failed runs.}
    \label{tab:nesterov_nonsmooth_comparison1}
\end{table}

Beyond algorithms based on Oracle 1 in a black-box setting, we also compare with LiPsMin~\cite{fiege2019algorithm}, which explicitly uses the kink structure to build piecewise linear approximations. The results are summarized in Table~\ref{tab:nesterov_nonsmooth_comparison2}, where the results of LiPsMin are taken from~\cite{fiege2019algorithm}. SRDescent and SRDescent-adapt achieve lower objective values with a comparable number of gradient evaluations as LiPsMin. For $n=20$, our methods using additional objective and gradient evaluations successfully escape the ``pseudo stationary value" of $0.81814$ observed in~\cite{fiege2019algorithm} for both LiPsMin and \textsc{HANSO}.

\begin{table}[H]
    \centering
    \resizebox{0.8\textwidth}{!}
    {
    \begin{tabular}{|c||ccc|ccc|ccc|}
    \hline
    \multirow{2}{*}{method} & \multicolumn{3}{c|}{$n=5$} & \multicolumn{3}{c|}{$n=10$} & \multicolumn{3}{c|}{$n=20$} \\
    \cline{2-10}
    & $f(x^k)$ & \#$f$ & \#$\nabla f$ & $f(x^k)$ & \#$f$ & \#$\nabla f$ & $f(x^k)$ & \#$f$ & \#$\nabla f$ \\
    \hline \hline
    LiPsMin 
    & 6.4e-2 & 1001 & 15095 & 0.81744 & 1001 & 30371 & 0.81814 & 1001 & 42150 \\
    \hline
    \multirow{2}{*}{SRDescent}
    & 5.4e-5  & 3949 & 15092 & 0.80087 & 7768 & 30368 & 0.81814 & 2218 & 42142 \\
    & 2.4e-7 & 1.3e+5 & 5.0e+5 & 2.7e-5 & 1.1e+7 & 1.0e+8 & 0.81804 & 5.3e+7 & 1.0e+9 \\
    \hline
    \multirow{2}{*}{SRDescent-adapt} 
    & 2.1e-5  & 3975 & 15092 & 0.7914 & 7792 & 30368 & 0.81814 & 2204 & 42142 \\
    & 4.2e-10 & 1.3e+5 & 5.0e+5 & 2.6e-6 & 1.1e+7 & 1.0e+8 & 0.81773 & 5.3e+7 & 1.0e+9 \\
    \hline
    \end{tabular}%
    }
    \caption{\small Comparison on Nesterov's nonsmooth Chebyshev-Rosenbrock function \eqref{eq:Nesterov_nonsmooth} with a fixed initial point $x^0$, where $x^0_i = 0.5$ for odd $i$ and $x^0_i = -0.5$ otherwise. ``$\# f$'' and ``$\# \nabla f$'' denote the number of objective and gradient evaluations. For SRDescent(-adapt), the first row reports the final objective using a comparable number of gradient evaluations as LiPsMin, while the second row shows results obtained with additional objective and gradient evaluations.}
    \label{tab:nesterov_nonsmooth_comparison2}
\end{table}

\subsection{Finite min of convex quadratic functions}
In this part, we test our algorithms on the finite min of convex quadratic functions:
\[
    f(x) = \min_{1 \leq i \leq m} \frac{1}{2} \|A_i \, x - b_i\|^2,
\]
where each $A_i \in \R^{d \times n}$ with $d \geq n$ has linearly independent columns, and $b_i = A_i \, x^\ast$ for a solution $x^\ast \sim \mathcal{N}\big(0, n^{-1/2} I_n\big)$. By construction, $x^\ast$ is a global minimizer with $f^\ast = 0$, which is also the unique stationary point. This function is not subdifferentially regular~\cite[Definition 7.25]{rockafellar2009variational} at points where multiple quadratic components are active, creating the so-called ``downward cusps".

We fix $n = d = 300$ and vary the number of pieces $m$. Each algorithm is tested from the same random initial point. Figure \ref{fig:min_of_smooth_vary_nb_functions1} summarizes run time and oracle calls averaged over $100$ random instances, and Figure \ref{fig:min_of_smooth_vary_nb_functions2} plots results on a random instance for $m \in \{10, 50, 100\}$. 
SRDescent(-adapt) and BFGS show the best overall performance. Their oracle calls required for achieving a target accuracy of $10^{-8}$ remain stable across different $m$. However, BFGS tends to be more efficient than SRDescent(-adapt) as $m$ increases to $350$. While Polyak and \textsc{GS} perform well for small $m$, their results start to have high deviations for large $m$. In Figure \ref{fig:min_of_smooth_vary_dimensions}, we also include an additional comparison for $m=50$ and varying dimensions $n$.

We remark that although the finite min of quadratic functions is not subdifferentially regular near the minimizer $x^\ast$, gradients of all component quadratic functions vanish at this point. This suggests that, unlike in the case of the finite max of quadratics, combining gradients of all active components may be unnecessary. Nonetheless, the observed slow convergence of NTDescent and GS indicates additional challenges inherent to this class of nonsmooth functions. Further investigation into methods for handling non-regular functions is left for future work.

\begin{figure}[!ht]
    \centering
    \begin{minipage}[b]{0.3\textwidth}
        \centering
        \includegraphics[width=1\textwidth]{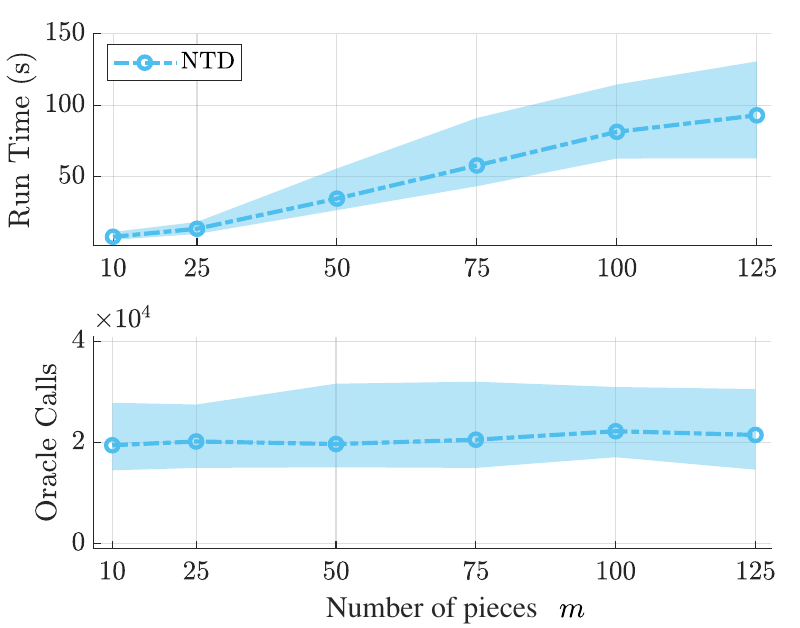}
    \end{minipage}
    \hspace{1em}
    \begin{minipage}[b]{0.3\textwidth}
        \centering
        \includegraphics[width=1\textwidth]{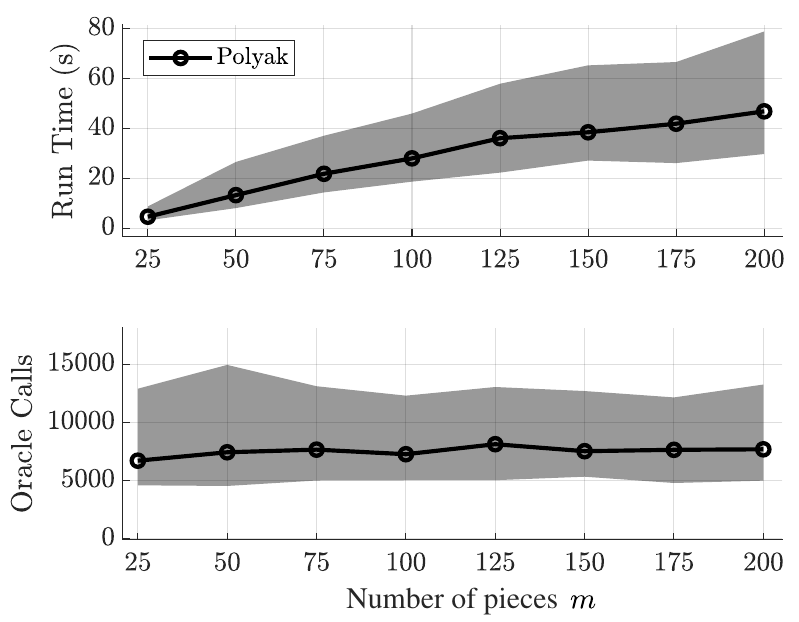}
    \end{minipage}
    \hspace{1em}
    \begin{minipage}[b]{0.3\textwidth}
        \centering
        \includegraphics[width=1\textwidth]{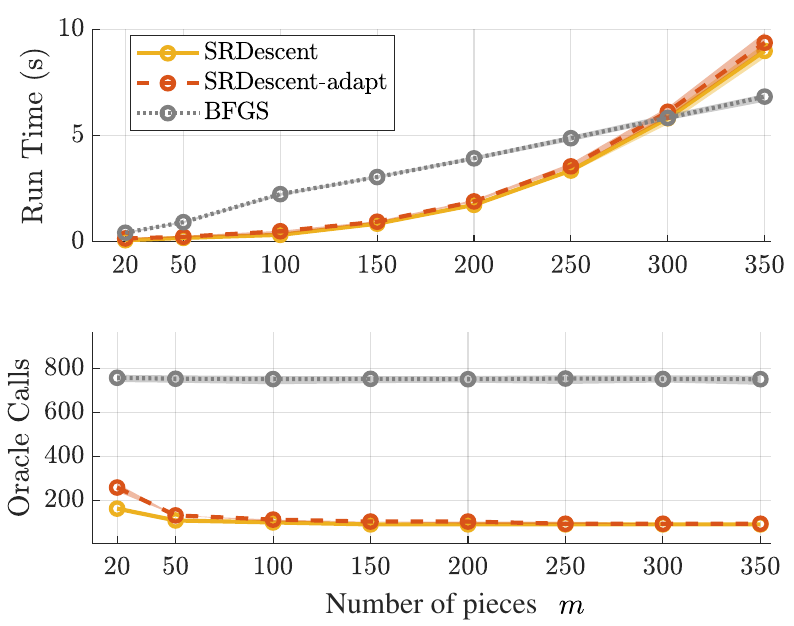}
    \end{minipage}
    \caption{\small Run time and oracle calls on the finite min of convex quadratic functions over $100$ randomly generated instances with $n=d=300$ and varying numbers of pieces $m$. The lines represent the median and shaded areas indicate inter-quartiles. NTDescent terminates when the objective gap $f(x^k) - f^\ast$ is below $10^{-6}$ and other methods terminate when the objective gap is below $10^{-8}$.}
\label{fig:min_of_smooth_vary_nb_functions1}
\end{figure}

\begin{figure}[!ht]
    \centering
    \begin{minipage}[b]{0.24\textwidth}
        \centering
        \scriptsize{(a) $(n, m) = (300, 10)$} \\[0.5em]
        \includegraphics[width=1\textwidth]{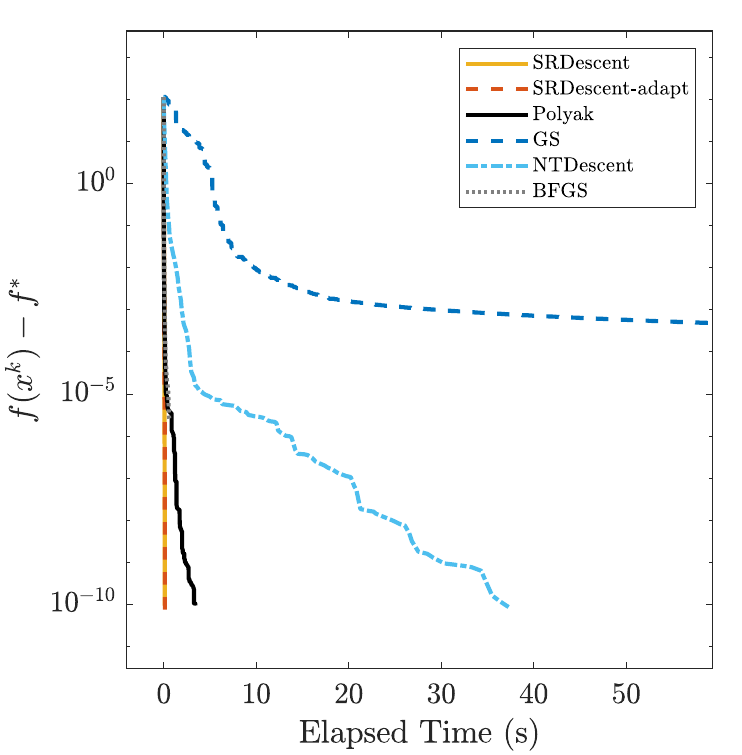}\\
        \includegraphics[width=1\textwidth]{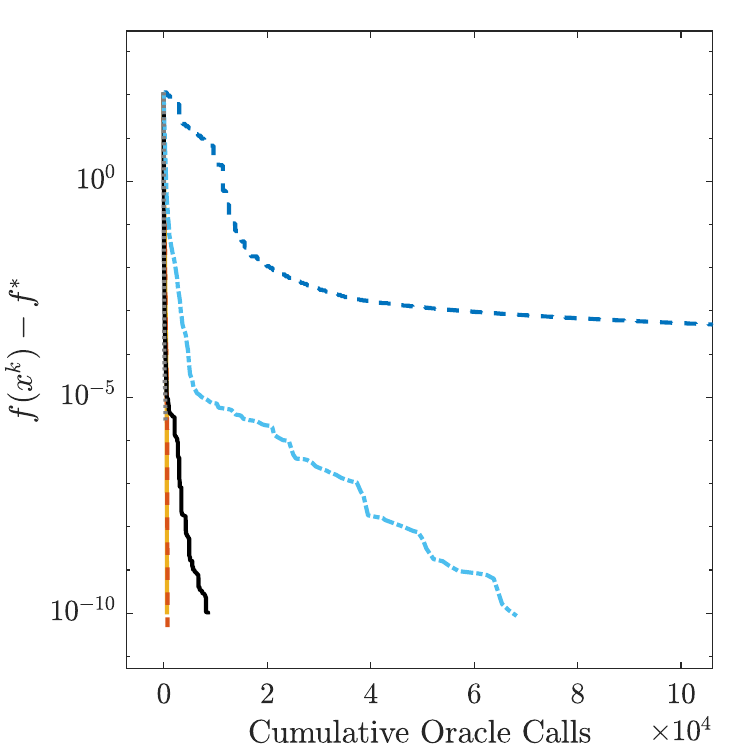}
    \end{minipage}
    \begin{minipage}[b]{0.24\textwidth}
        \centering
        \scriptsize{(b) $(n, m) = (300, 50)$} \\[0.5em]
        \includegraphics[width=1\textwidth]{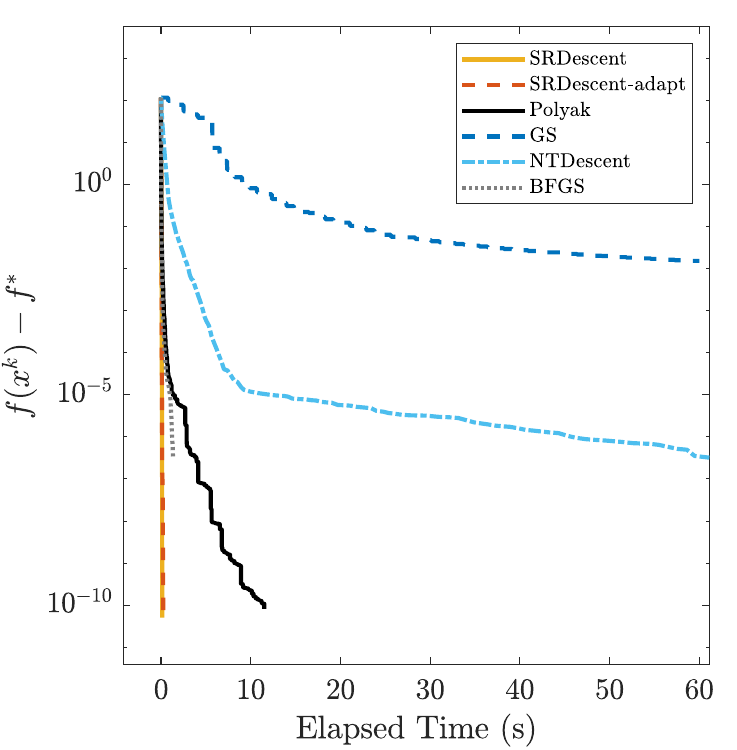}\\
        \includegraphics[width=1\textwidth]{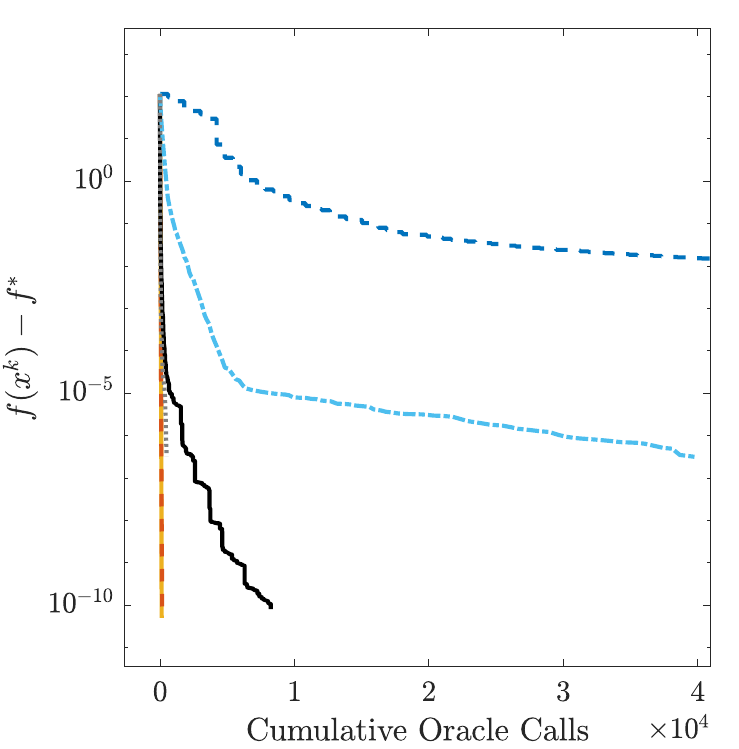}
    \end{minipage}
    \begin{minipage}[b]{0.24\textwidth}
        \centering
        \scriptsize{(c) $(n, m) = (300, 100)$} \\[0.5em]
        \includegraphics[width=1\textwidth]{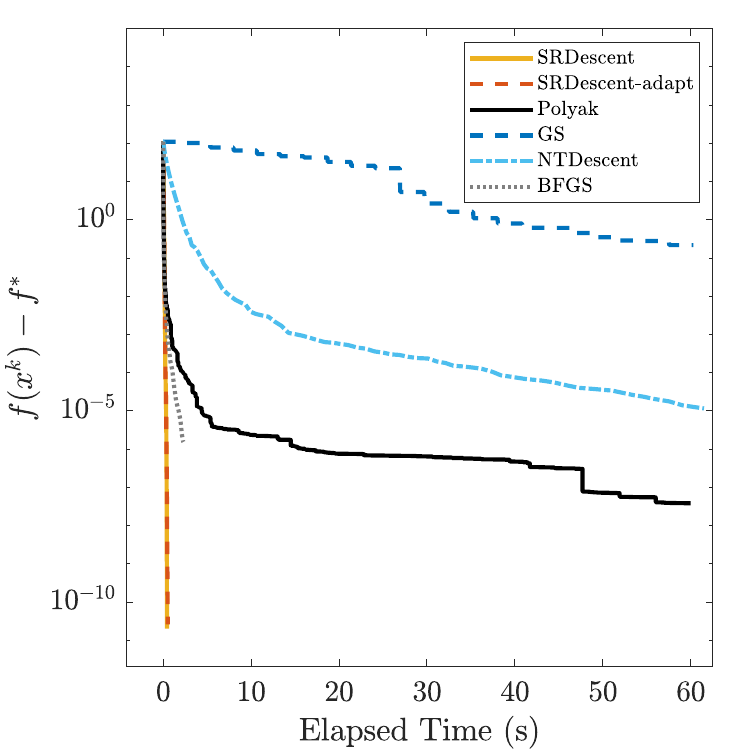}\\
        \includegraphics[width=1\textwidth]{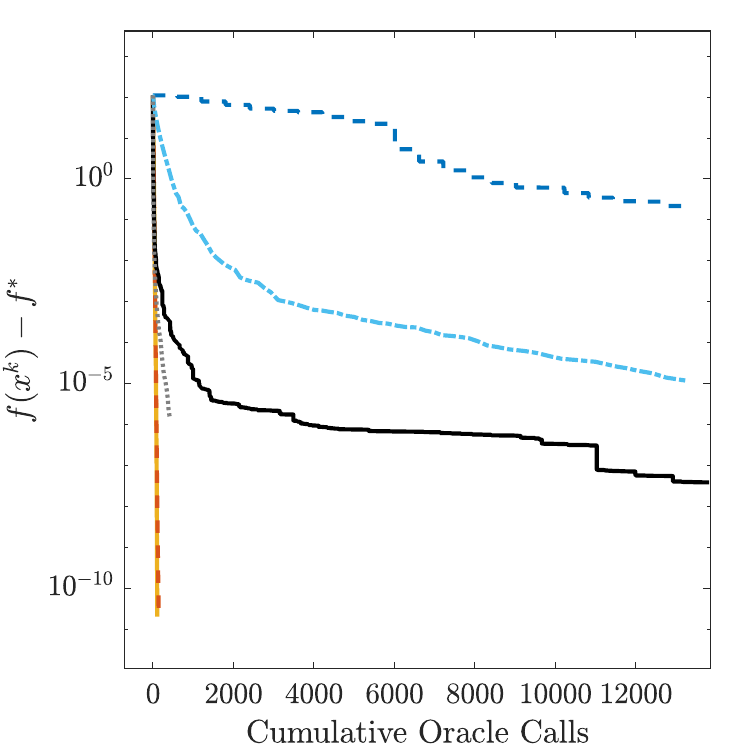}
    \end{minipage}
    \caption{\small Performance on the finite min of convex quadratic functions for $n=d=300$ and varying numbers of pieces $m \in \{10, 50, 100\}$, all initialized from the same randomly generated  points. For Polyak, we report the best objective value after $k$ oracle calls, i.e., $\min_{1 \leq i \leq k} f(x^k)$, rather than $f(x^k)$. For all cases, BFGS terminates early due to the line search failures.}
    \label{fig:min_of_smooth_vary_nb_functions2}
\end{figure}

\begin{figure}[H]
    \centering
    \begin{minipage}{0.24\textwidth}
        \centering
        \scriptsize{(a) $(n, m) = (100, 50)$} \\[0.5em]
        \includegraphics[width=1\textwidth]{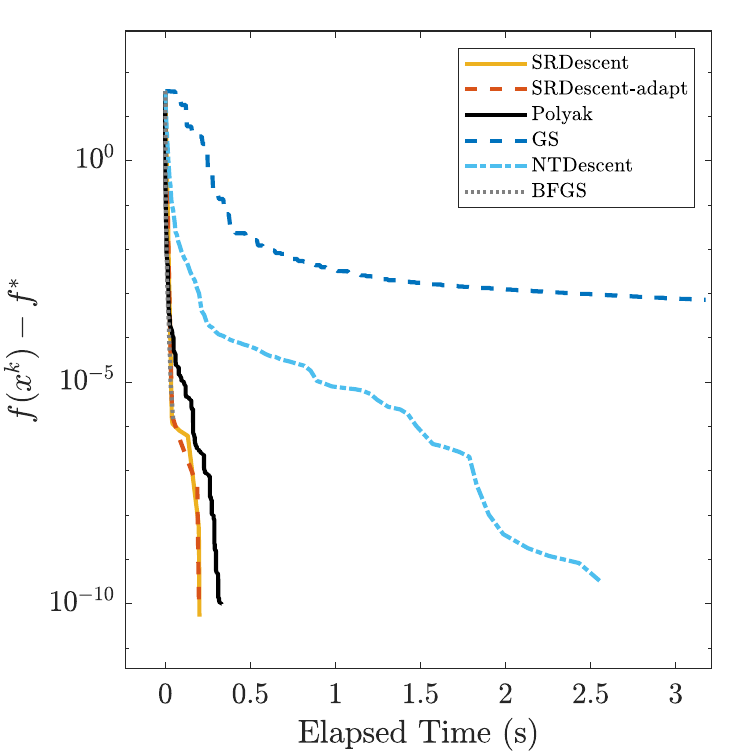}\\
        \includegraphics[width=1\textwidth]{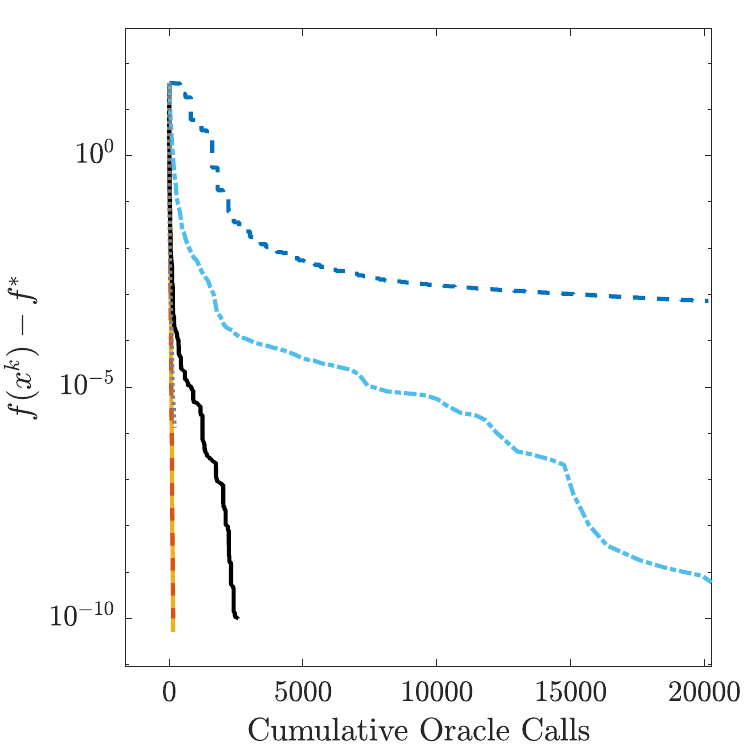}
    \end{minipage}
    \begin{minipage}{0.24\textwidth}
        \centering
        \scriptsize{(b) $(n, m) = (250, 50)$} \\[0.5em]
        \includegraphics[width=1\textwidth]{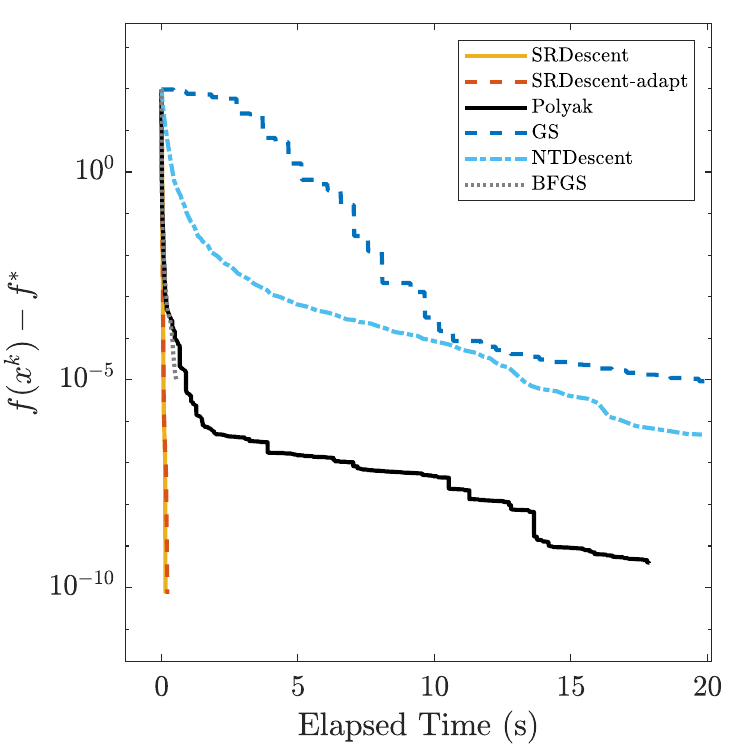}\\
        \includegraphics[width=1\textwidth]{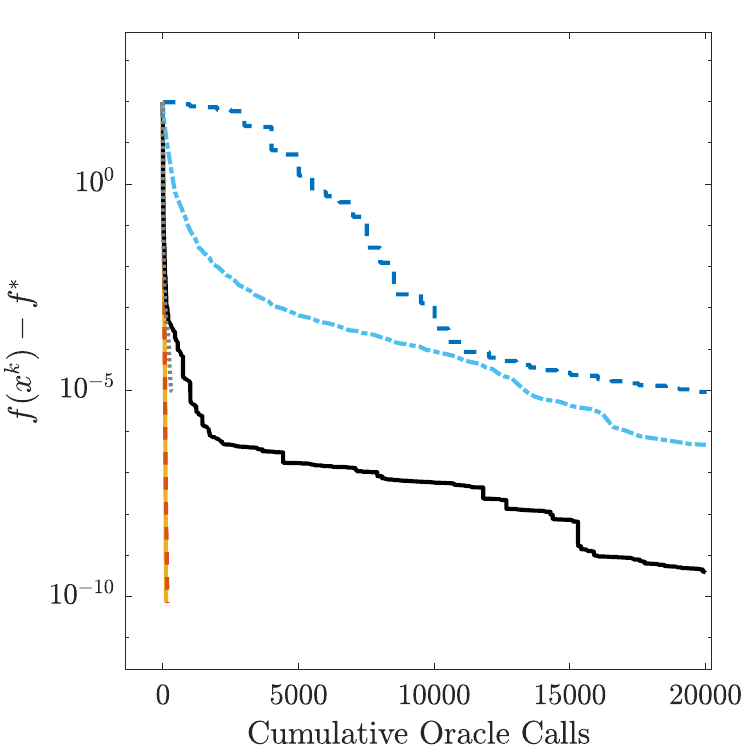}
    \end{minipage}
    \begin{minipage}{0.24\textwidth}
        \centering
        \scriptsize{(c) $(n, m) = (500, 50)$} \\[0.5em]
        \includegraphics[width=1\textwidth]{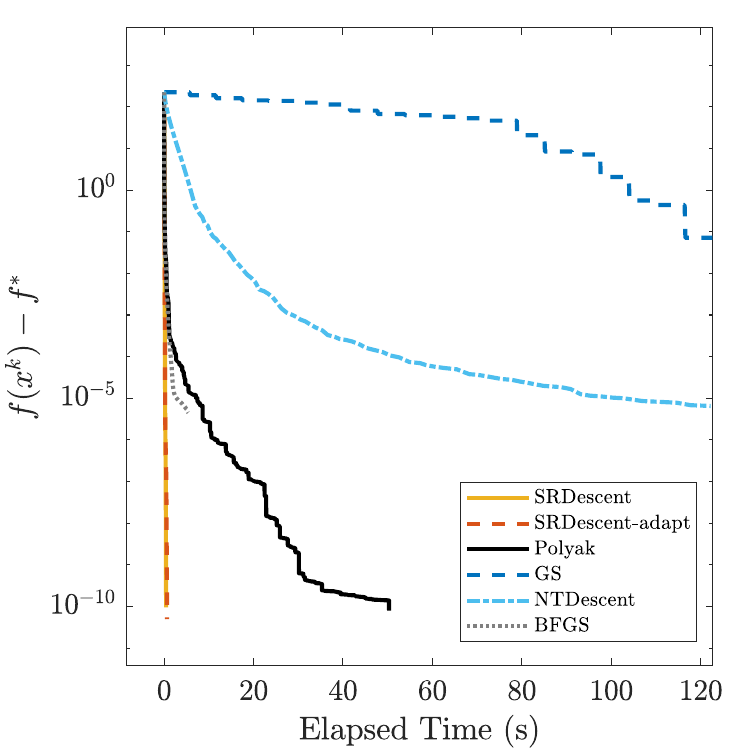}\\
        \includegraphics[width=1\textwidth]{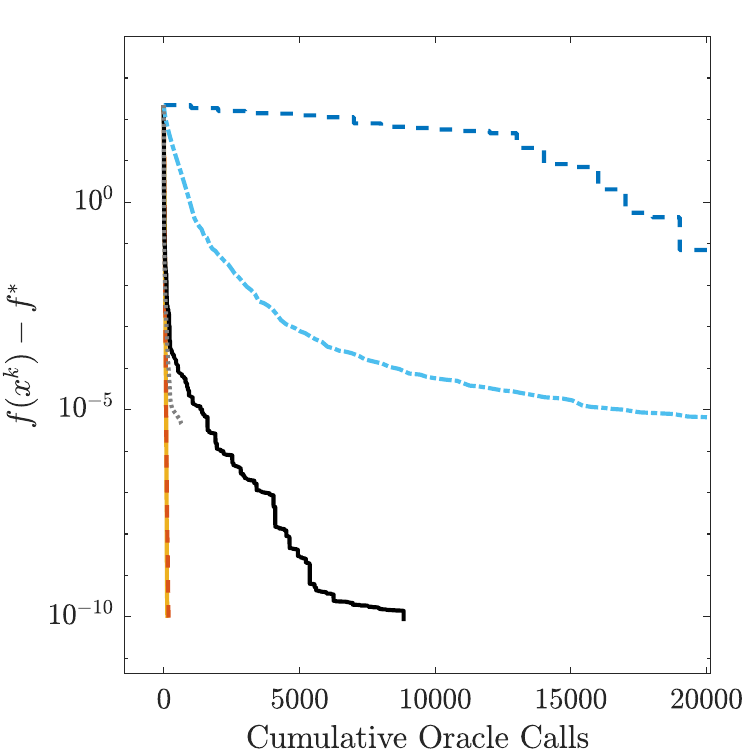}
    \end{minipage}
    \begin{minipage}{0.24\textwidth}
        \centering
        \scriptsize{(d) $(n, m) = (1000, 50)$} \\[0.5em]
        \includegraphics[width=1\textwidth]{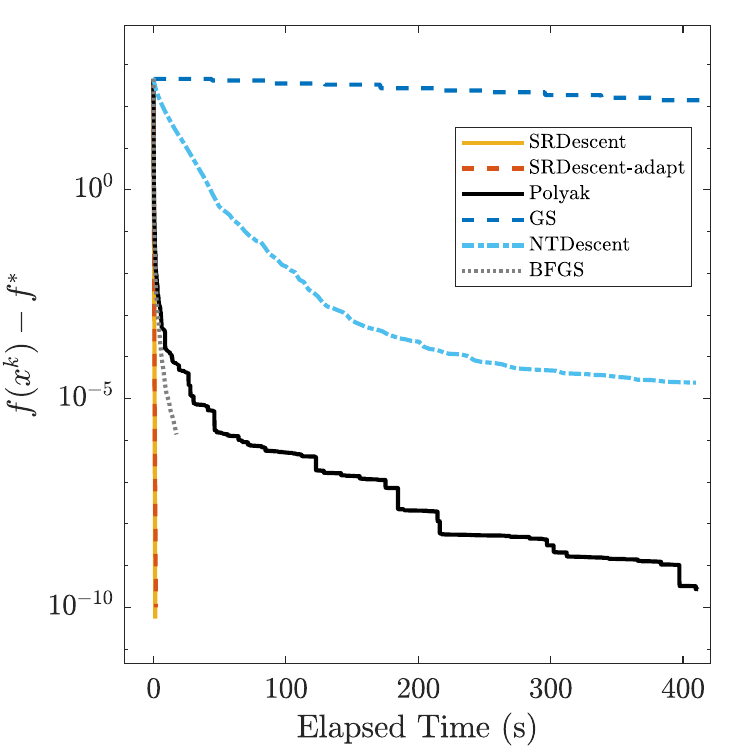}\\
        \includegraphics[width=1\textwidth]{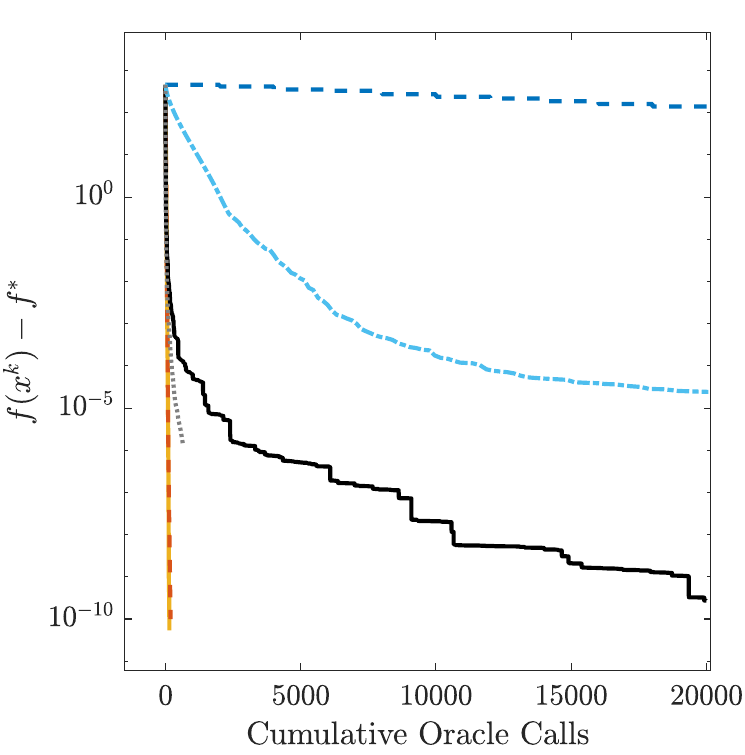}
    \end{minipage}
    \caption{\small Performance on the finite min of convex quadratic functions for a fixed numbers of pieces $m=50$ and varying dimensions $n \in \{100, 250, 500, 1000\}$, all initialized from the same randomly generated  points. For Polyak, we report the best objective value after $k$ oracle calls, i.e., $\min_{1 \leq i \leq k} f(x^k)$, rather than $f(x^k)$. For all cases, BFGS terminates early due to the line search failures.}
    \label{fig:min_of_smooth_vary_dimensions}
\end{figure}

\subsection{Marginal functions with varying feasible sets}

In the final example, we study the minimization of a marginal function given by:
\[
\begin{array}{rl}
    f(x) =& \;\;\;\;\displaystyle\min_{y \in \R^m} \;\;\;\left\{(c + Dx)^\top y + \frac{1}{2} \, y^\top Q y + \|x\|^4\right\} \\[0.1in]
    &\mbox{subject to }\; b - Ax - \mathbf{1} \leq W y \leq b - Ax \,.
\end{array}
\]
Here, the vectors and matrices are generated randomly: the entries of $b, c, A,$ and $D$ are independently drawn from $\mathcal{N}\big(0, m^{-1/2}\big)$, the matrix $W \in \R^{m \times m}$ has linearly independent rows, and $Q$ is positive semi-definite. The vector $\mathbf{1}$ denotes the $m$-dimensional all-ones vector. Under this setup, $(-f)$ satisfies assumptions (i)-(ii), and (iii$^\prime$) LICQ in Section \ref{sec4.2:marginal_vary}. Without the term $\|x\|^4$, the marginal function is known to be piecewise linear-quadratic on its domain of finiteness~\cite[Proposition 5.2.2]{cui2021modern}. The additional term $\|x\|^4$ ensures that $f$ is bounded from below.

We fix $n = 300$ and vary the dimension $m$ of the inner minimization problem. We run SRDescent with optimality tolerances  $\epsilon_\text{tol} = \nu_\text{tol} = 10^{-3}$. Figure \ref{fig:optval_vary_dim} presents the results for run time and oracle calls. We focus solely on SRDescent because different algorithms converge to different stationary values, complicating direct comparisons. Additionally, we observe that SRDescent-adapt exhibits performance nearly identical to that of SRDescent. As shown in Figure \ref{fig:optval_vary_dim}, increasing $m$ leads to a notable increase in run time and oracle calls. This indicates that the problem becomes more challenging as the dimension of the inner minimization grows.

\begin{figure}[!ht]
\centering
\begin{minipage}[b]{0.3\textwidth}
    \centering
    \includegraphics[width=\textwidth]{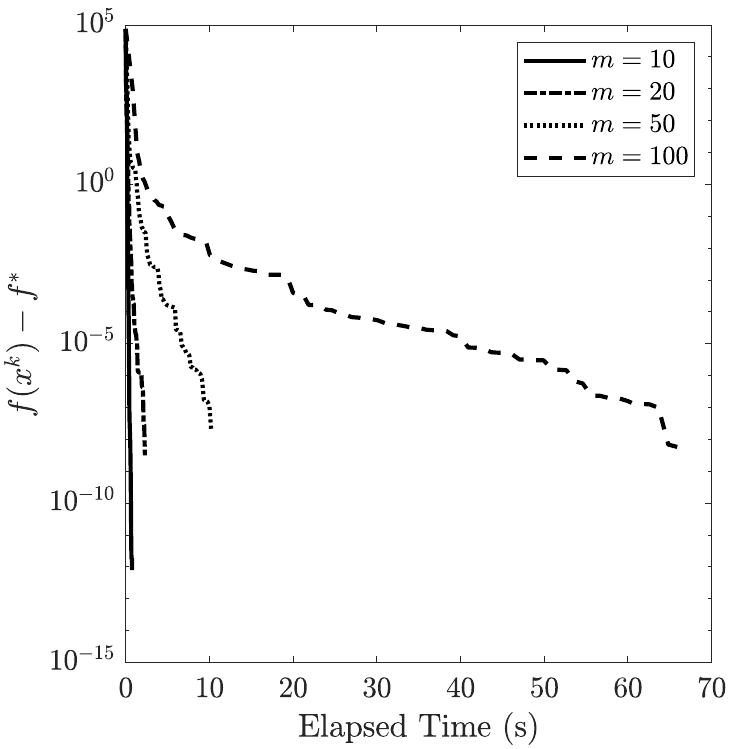}
\end{minipage}
\hspace{3em}
\begin{minipage}[b]{0.31\textwidth}
    \centering
    \includegraphics[width=\textwidth]{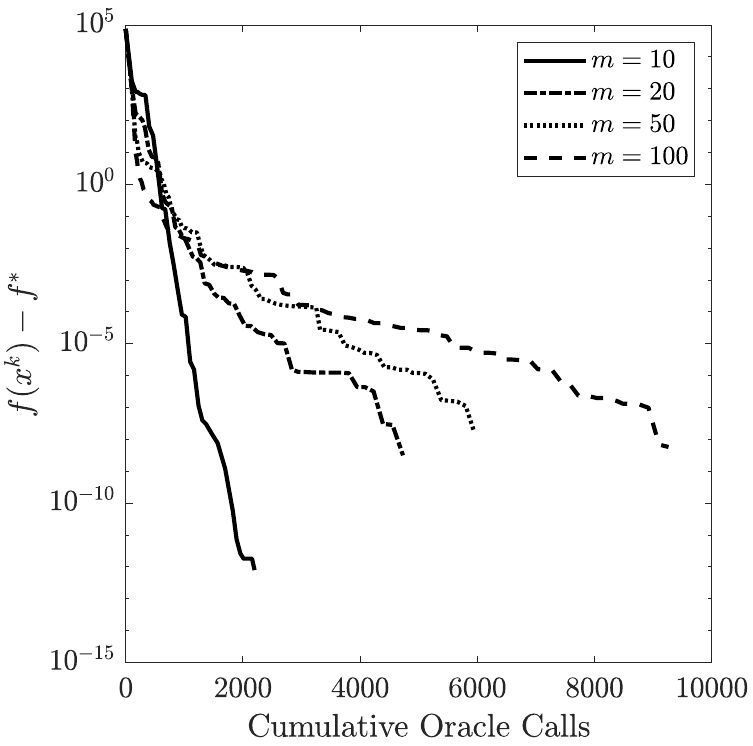}
\end{minipage}
\caption{\small Run time and oracle calls of SRDescent on marginal  functions with varying feasible sets for a fixed dimension $n = 300$ and varying $m \in \{10, 20, 50, 100\}$. For each instance, we use the final objective value as an estimation of $f^\ast$ to compute the objective gap.}
\label{fig:optval_vary_dim}
\end{figure}

\section{Conclusions}
\label{sec:conclusions}
In this work, we proposed a unifying principle for designing convergent descent methods in nonsmooth optimization through the concept of descent-oriented subgradients. This framework reveals the common structure of Goldstein-type methods and bundle methods, and motivates a novel technique for constructing descent directions, which we term subgradient regularization.
Our work opens several promising directions for future research. A key question is to develop computationally efficient descent-oriented subdifferentials for broader classes of nonsmooth functions. Additionally, extending linear convergence guarantees under regularity conditions to more general settings, as well as establishing iteration complexity bounds, are important topics for further exploration.

\bibliography{reference}

\appendix
\subsection*{A Omitted Proofs in Section \ref{sec4:subgrad-regularization}}
\subsubsection*{A.1 Proof of Lemma \ref{lem:G_max_marginal}}
Before proving Lemma \ref{lem:G_max_marginal}, we first establish a technical result that quantifies the objective value and gradient norm associated with a solution to the subgradient-regularized problem \eqref{eq:P_regularized}.
\begin{lemma}
\label{lem:bound_opt_val}
    Consider problem \eqref{eq:P}, and let $Y^\epsilon(x)$ be defined as in \eqref{eq:P_regularized}. Let $y^{\epsilon}: \R^n \to \R^m$ be a single-valued map with $y^{\epsilon}(x) \in Y^{\epsilon}(x)$ for all $(x, \epsilon) \in \R^{n}\times (0, +\infty)$. Then for all such $(x, \epsilon)$,\\[0.05in]
    (a) (Error estimate for the objective value)
    \[
        f(x) - \frac{\epsilon}{2}\inf\limits_{y \in Y^{\ast}(x)}\|\nabla_{x}\varphi (x, y)\|^{2}\leq \varphi (x, y^{\epsilon}(x)) \leq f(x).
    \]
    (b) (Bound on the gradient norm)
    \[
        \|\nabla_{x}\varphi(x, y^{\epsilon}(x))\| \leq \inf\limits_{y \in Y^{\ast}(x)} \|\nabla_{x}\varphi(x, y)\|.
    \]
\end{lemma}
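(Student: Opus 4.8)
The plan is to exploit the defining optimality of $y^{\epsilon}(x)$ for the regularized objective
\[
    \Phi_{\epsilon}(y) \;\triangleq\; \varphi(x, y) - \tfrac{\epsilon}{2}\|\nabla_{x}\varphi(x, y)\|^{2}
\]
and compare it against an arbitrary maximizer $y^{\ast}\in Y^{\ast}(x)$ of the unregularized inner problem. First I would record that both $Y^{\epsilon}(x)$ and $Y^{\ast}(x)$ are nonempty: $Y$ is compact, $\varphi(x,\cdot)$ is continuous, and $\nabla_{x}\varphi(x,\cdot)$ is continuous by the joint-continuity assumption on \eqref{eq:P}, so $\Phi_{\epsilon}$ and $\varphi(x,\cdot)$ each attain their maximum over $Y$; likewise $\inf_{y\in Y^{\ast}(x)}\|\nabla_{x}\varphi(x,y)\|$ is attained, since $Y^{\ast}(x)$ is a closed subset of the compact set $Y$.

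For the upper bound in (a), I would simply note $\varphi(x, y^{\epsilon}(x)) \le \max_{y\in Y}\varphi(x,y) = f(x)$. For the two remaining inequalities, fix any $y^{\ast}\in Y^{\ast}(x)$. Since $y^{\epsilon}(x)$ maximizes $\Phi_{\epsilon}$ over $Y$ and $\varphi(x, y^{\ast}) = f(x)$, the inequality $\Phi_{\epsilon}(y^{\epsilon}(x)) \ge \Phi_{\epsilon}(y^{\ast})$ reads
\[
    \varphi(x, y^{\epsilon}(x)) - \tfrac{\epsilon}{2}\|\nabla_{x}\varphi(x, y^{\epsilon}(x))\|^{2} \;\ge\; f(x) - \tfrac{\epsilon}{2}\|\nabla_{x}\varphi(x, y^{\ast})\|^{2}.
\]
Dropping the nonnegative term $\tfrac{\epsilon}{2}\|\nabla_{x}\varphi(x, y^{\epsilon}(x))\|^{2}$ from the left-hand side gives $\varphi(x, y^{\epsilon}(x)) \ge f(x) - \tfrac{\epsilon}{2}\|\nabla_{x}\varphi(x, y^{\ast})\|^{2}$, and taking the infimum over $y^{\ast}\in Y^{\ast}(x)$ yields the lower bound in (a). For (b), instead substitute the upper bound $\varphi(x, y^{\epsilon}(x)) \le f(x)$ into the left-hand side of the displayed inequality; after cancelling $f(x)$ and using $\epsilon>0$ this gives $\|\nabla_{x}\varphi(x, y^{\epsilon}(x))\|^{2} \le \|\nabla_{x}\varphi(x, y^{\ast})\|^{2}$, and again taking the infimum over $y^{\ast}\in Y^{\ast}(x)$ completes the proof.

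I do not anticipate a real obstacle here: the entire argument is a single comparison of objective values at $y^{\epsilon}(x)$ versus a point of $Y^{\ast}(x)$. The only points requiring a word of care are the existence of the relevant maximizers (compactness of $Y$ together with continuity), so that the comparison is meaningful, and the sign condition $\epsilon>0$ needed to divide through in part (b).
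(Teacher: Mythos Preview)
Your proposal is correct and follows essentially the same route as the paper: both argue the upper bound in (a) from the definition of $f$, derive the key comparison $\Phi_\epsilon(y^\epsilon(x))\ge \Phi_\epsilon(y^\ast)$ for $y^\ast\in Y^\ast(x)$, drop the nonnegative regularization term to get the lower bound in (a), and then combine that comparison with $\varphi(x,y^\epsilon(x))\le f(x)$ and $\epsilon>0$ to obtain (b), finishing by taking the infimum over $Y^\ast(x)$. Your added remarks on nonemptiness and attainment of the infimum mirror the paper's own justification that the infimum is finite by compactness of $Y^\ast(x)$ and continuity of $\nabla_x\varphi(x,\cdot)$.
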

\begin{proof}
    (a) The upper bound $\varphi (x, y^{\epsilon}(x)) \leq f(x)$ follows immediately from the definition of $f(x)$. For the lower bound, we use the optimality of $y^{\epsilon}(x) \in Y^{\epsilon}(x)$. For any $y \in Y^{\ast}(x)$, we have
    \begin{equation}\label{eq:opt_regularized-soln}
        \varphi (x, y) - \frac{\epsilon}{2}\|\nabla_{x}\varphi (x, y)\|^{2}
        \leq \varphi (x, y^{\epsilon}(x)) - \frac{\epsilon}{2} \|\nabla_{x}\varphi (x, y^{\epsilon}(x))\|^{2}
        \leq \varphi (x, y^{\epsilon}(x)).
    \end{equation}
    Taking the supremum over $y \in Y^{\ast}(x)$ on the left-hand side yields
    \[
        f(x) - \frac{\epsilon}{2}\inf_{y \in Y^{\ast}(x)}\|\nabla_{x}\varphi (
        x, y)\|^{2}
        \leq \varphi (x, y^{\epsilon}
        (x)).
    \]
    Here, the infimum is attained and finite since $Y^{\ast}(x)$ is compact and $\nabla_{x}\varphi (x, \cdot)$ is continuous.

    \vspace{0.05in}
    \noindent (b) For any $y \in Y^{\ast}(x)$, combining the inequality $\varphi (x, y^{\epsilon}(x)) \leq f(x)$ with \eqref{eq:opt_regularized-soln}, we have
    \[
        f(x) - \frac{\epsilon}{2}\|\nabla_{x}\varphi (x, y)\|^{2}
        = \varphi (x, y) - \frac{\epsilon}{2}\|\nabla_{x}\varphi (x, y)\|^{2}
        \leq f(x ) -\frac{\epsilon}{2}\|\nabla_{x}\varphi (x, y^{\epsilon}(x))\|^{2}.
    \]
    Thus, $\|\nabla_{x}\varphi(x, y^{\epsilon}(x))\| \leq \|\nabla_{x}\varphi(x, y)\|$. Taking the infimum over $y \in Y^{\ast}(x)$ completes the proof.
\end{proof}

We now proceed to prove Lemma \ref{lem:G_max_marginal}, using the results of Lemma \ref{lem:bound_opt_val}.
\begin{proof}[Proof of Lemma \ref{lem:G_max_marginal}]
    (a) Based on Lemma \ref{lem:bound_opt_val}(a), if we can further show that
    \begin{equation}\label{eq:limsup_bounded}
        \limsup_{x \to \bx}\left(\inf_{y \in Y^{\ast}(x)} \|\nabla_{x}\varphi (x, y)\|^{2}\right) < +\infty,
    \end{equation}
    the proof is then completed by taking the limits as $\epsilon \downarrow 0$ and $x \to \bx$ and using the continuity of $f$
    Observe that $\inf_{y \in Y^{\ast}(x)}\|\nabla_{x}\varphi (x, y)\| \leq \sup_{v \in \partial f(x)}\|v\|$ because $\partial f(x) = \conv \big(\bigcup\{\nabla_{x} \varphi(x, y) \mid y \in Y^{\ast}(x)\}\big)$.
    From~\cite[Proposition 2.1.2]{clarke1990optimization}, the term $\sup_{v \in \partial f(x)} \|v\|$ is upper bounded by the local Lipschitz constant of $f$ at $x$. Thus, \eqref{eq:limsup_bounded} is a direct consequence of the locally Lipschitz continuity of $f$ at $\bx$, and (a) follows.

    \vspace{0.05in}
    \noindent (b) Let $\{\epsilon_{k}\} \downarrow 0$ and $\{x^k\} \to \bx$.
    We aim to show that the sequence $\big\{\nabla_{x} \varphi(x^k, y^{\epsilon_k}(x^k))\big\}$ is bounded and that any of its accumulation points belongs to $\partial f(\bx)$. 
    Boundedness follows immediately from Lemma \ref{lem:bound_opt_val}(b) and our assumption that $\nabla_x \varphi(\cdot, \cdot)$ is continuous. Now consider a convergent subsequence $\big\{\nabla_{x} \varphi(x^k, y^{\epsilon_k}(x^k))\big\}_{k \in N}$ for some index set $N \in \N^{\sharp}_{\infty}$. Since $Y$ is compact, the corresponding subsequence $\{y^{\epsilon_k}(x^k)\}_{k \in N}\subset Y$ has at least one accumulation point $\bar y \in Y$. Let $y^{\epsilon_k}(x^k) \to_{N^\prime} \bar y$ along a further subsequence indexed by $N' \subset N$. By the continuity of $\varphi (\cdot, \cdot)$ and part (a), we have $\varphi(x^k, y^{\epsilon_k}(x^k)) \to_{N^\prime} \varphi(\bx, \bar y) = f(\bx)$, so $\bar y \in Y^{\ast}(\bx)$. Using the continuity of $\nabla_x \varphi(\cdot, \cdot)$, we obtain
    \[
        \lim\limits_{k(\in N) \to +\infty} \nabla_x \varphi(x^k, y^{\epsilon_k} (x^k)) 
        = \lim\limits_{k(\in N^\prime) \to +\infty} \nabla_x \varphi(x^k, y^{\epsilon_k} (x^k))
        = \;\nabla_x \varphi(\bx, \bar y) 
        \,\in\, \partial f(\bx).
    \]
    We can conclude that any accumulation point of $\big\{\nabla_{x} \varphi(x^k, y^{\epsilon_k}(x^k))\big\}$ is an element of $\partial f(x)$. This completes the proof of part (b).

    \vspace{0.05in}
    \noindent(c) Fix $\epsilon > 0$ and take any sequence $\{x^k\} \to \bx$. By Lemma \ref{lem:bound_opt_val}(b) and the continuity of $\nabla_x \varphi(\cdot, \cdot)$, the sequence $\big\{\nabla_{x} \varphi(x^k, y^{\epsilon}(x^k))\big\}$ is bounded. To establish the inclusion
    \begin{equation}\label{eq:result_part(c)}
        \limsup_{k \to +\infty} \nabla_x \varphi(x^k, y^{\epsilon}(x^k)) \subset \bigcup \big\{ \nabla_x \varphi(\bx, y) \mid y \in Y^{\epsilon}(\bx)\big\},
    \end{equation}
    it suffices to show that every accumulation point of the sequence $\{y^{\epsilon}(x^k)\}$ belongs to $Y^{\epsilon}(\bx)$. Recall that
    \[
        y^{\epsilon}(x) \,\in\, Y^{\epsilon}(x) = \argmax_{y \in Y}\left\{\varphi(x,y) - \frac{\epsilon}{2} \|\nabla_x \varphi(x,y)\|^2 \right\}.
    \]
    By assumption, both $\varphi(\cdot, \cdot)$ and $\nabla_x \varphi(\cdot, \cdot)$ are continuous. Hence, the objective function of the parametric maximization above is continuous in $x$ for every fixed $y \in Y$. It then follows from \cite[Theorem 1.17]{rockafellar2009variational} that any accumulation point of $\{y^{\epsilon}(x^k)\}$ must lie in $Y^{\epsilon}(\bx)$. 
    
    Observe that \eqref{eq:result_part(c)} holds for any single-valued selection $y^{\epsilon}: \R^n \to \R^m$ satisfying $y^{\epsilon}(x) \in Y^{\epsilon}(x)$ for all $(x, \epsilon) \in \R^{n}\times (0, +\infty)$. It follows that $\limsup_{x \to \bx}G(x, \epsilon) \subset G(\bx, \epsilon)$; equivalently, $\limsup_{x \to \bx}G (x, \epsilon) = G(\bx, \epsilon)$. Thus, $G$ satisfies the first condition in \eqref{eq:Def_G2_sufficient}.

    \vspace{0.05in}
    \noindent(d) Suppose that $S(\bx)$ is a convex set. It is immediate from \eqref{eq:Clarke_subdiff} that $\partial f(\bx) = \bigcup\{\nabla_{x}\varphi(\bx, y) \mid y \in Y^{\ast}(x)\}$.
    Take any sequence $\{\epsilon_{k}\} \downarrow 0$. By Lemma
    \ref{lem:bound_opt_val}, we know that
    \[
        \|\nabla_{x}\varphi(\bx, y^{\epsilon_k}(\bx))\| 
        \leq \inf_{y \in Y^\ast(\bx)}\|\nabla_{x}\varphi(\bx, y)\|
        = \inf_{v \in \partial f(\bx)}\|v\|,
    \]
    so the sequence $\{\nabla_{x}\varphi (\bx, y^{\epsilon_k}(\bx))\}$ is bounded and has at least one accumulation point. Additionally, by part (b), every accumulation point of this sequence belongs to $\partial f(\bx)$. Moreover, the previous bound $\|\nabla_{x}\varphi(\bx, y^{\epsilon_k}(\bx))\| \leq \inf_{v \in \partial f(\bx)}\|v\|$ ensures that the only possible accumulation point is $\argmin_{v \in \partial f(\bx)} \|v\|$. Therefore, $\{\nabla_{x}\varphi (\bx, y^{\epsilon_k}(\bx))\}$ is a bounded sequence with a unique accumulation point. Finally, if the sequence did not converge, it would have at least two distinct accumulation points, contradicting uniqueness of the minimal norm subgradient. Thus, $\nabla_{x}\varphi (\bx, y^{\epsilon_k}(\bx)) \to \argmin_{v \in \partial f(\bx)} \|v\|$, and the result follows because
    \[
        \lim_{\epsilon \downarrow 0}G(\bx, \epsilon) 
        = \lim_{\epsilon \downarrow 0}\bigcup \big\{\nabla_{x} \varphi(\bx, y^{\epsilon}(\bx)) \mid y^{\epsilon}(\bx) \in Y^{\epsilon}(\bx)\big\} 
        \,=\, \argmin_{v \in \partial f(\bx)} \|v\|.
    \]
\end{proof}

\subsubsection*{A.2 Proof of Lemma \ref{lem:G_opt_val}}
\begin{proof}
    (a) First, it is easy to see that for each fixed $\epsilon > 0$, $G(\cdot, \epsilon)$ is closed-valued and locally bounded since each $\nabla_x \varphi_j$ is continuous and $Y(\cdot)$ is locally bounded. We proceed by presenting technical results parallel to Lemma \ref{lem:bound_opt_val}. Let $y^\epsilon: \R^n \to \R^m$ be a single-valued map such that $y^{\epsilon}(x) \in Y^{\epsilon}(x)$ for all $(x, \epsilon) \in \R^n \times (0, +\infty)$. By the optimality of $y^{\epsilon}(x)$, we obtain the following estimate for the objective value:
    \begin{equation}\label{eq:(a)Obj_error}
        f(x) - \frac{\epsilon}{2} \inf_{y \in Y^{\epsilon}(x)} \|\nabla_x \mathcal{L}(x, y; \lambda(x))\|^2 \leq \varphi_0(x, y^{\epsilon}(x)) \leq f(x),
    \end{equation}
    along with a bound on the gradient norm:
    \begin{equation}\label{eq:(a)Gradient_norm}
        \|\nabla_x \mathcal{L}(x, y^{\epsilon}(x); \lambda(x))\|
        \leq \inf_{y \in Y^{\ast}(x)} \|\nabla_x \mathcal{L}(x, y; \lambda(x))\|
        = \inf_{v \in \partial f(x)} \|v\|.
    \end{equation}
    To verify property (G1), consider any sequences $\{x^k\} \to \bx$ and $\{\epsilon_k\} \downarrow 0$. Note that the sequence $\big\{\nabla_x \mathcal{L}\big(x^k, y^{\epsilon_k}(x^k); \lambda(x^k)\big)\big\}$ is bounded due to \eqref{eq:(a)Gradient_norm} and the local Lipschitz continuity of $f$. Since $y^{\epsilon_k}(x^k) \in Y(x^k)$, assumption (ii) ensures that $\{y^{\epsilon_k}(x^k)\}$ is bounded. Moreover, any accumulation point of $\{y^{\epsilon_k}(x^k)\}$ must lie in $Y^\ast(\bx)$ because $\varphi_0(x^k, y^{\epsilon_k}(x^k)) \to f(\bx)$ by \eqref{eq:(a)Obj_error}. In addition, the multiplier sequence $\{\lambda(x^k)\}$ is bounded due to the MFCQ, and its accumulation point must be $\lambda(\bx)$ by the continuity of each $\nabla_y \varphi_j(\cdot, \cdot)$. Hence,
    \[
        \limsup_{k \to +\infty} \nabla_x \mathcal{L}\big(x^k, y^{\epsilon_k}(x^k); \lambda(x^k)\big) 
        \subset \bigcup \big\{\nabla_x \mathcal{L}(\bx, y; \lambda(\bx)) \mid y \in Y^\ast(\bx)\big\} 
        \,=\, \partial f(\bx),
    \]
    which implies property (G1); that is, $\limsup_{x \to \bx, \epsilon \downarrow 0} G(x, \epsilon) \subset \partial f(\bx)$.

    To show $\limsup_{x \to \bx} G(x, \epsilon) = G(\bx, \epsilon)$ for any fixed $\epsilon > 0$, consider any sequence $\{x^k\} \to \bx$. The argument proceeds similarly to the one used above. The key difference is that, for fixed $\epsilon > 0$, any accumulation point of the sequence $\{y^{\epsilon}(x^k)\}$ must belong to $Y^{\epsilon}(\bx)$, which follows from the continuity of $\varphi(\cdot, \cdot)$ and each $\nabla_x \varphi_j(\cdot, \cdot)$.

    Finally, to establish $\lim_{\epsilon \downarrow 0} G(\bx, \epsilon) = \argmin_{v \in \partial f(\bx)}\|v\|$, we observe that property (G1) ensures $\limsup_{\epsilon \downarrow 0} G(\bx, \epsilon) \subset \partial f(\bx)$, while \eqref{eq:(a)Gradient_norm} guarantees that all vectors in $G(\bx, \epsilon)$ have norm at most $\inf_{v \in \partial f(\bx)} \|v\|$. Combining these two results and using the uniqueness of the minimal norm subgradient, we can deduce that $\lim_{\epsilon \downarrow 0} G(\bx, \epsilon) = \argmin_{v \in \partial f(\bx)} \|v\|$.

    So far, we have verified properties (G1) and (G2$^\prime$), and thus conclude that the mapping $G$ defined in \eqref{eq:G_LICQ} is a descent-oriented subdifferential.

    \vspace{0.05in}
    \noindent(b) The proof proceeds analogously under the assumption that $\varphi_0(x, \cdot)$ is strongly concave for each fixed $x$. In this case, both the error estimate for the objective value and the bound on the gradient norm follow from the same reasoning as in part (a). Properties (G1) and (G2$^\prime$) can be verified by applying analogous boundedness and convergence arguments. Therefore, the mapping $G$ defined in \eqref{eq:G_MFCQ} is also a descent-oriented subdifferential in this case.
\end{proof}

\subsection*{B Omitted Proofs in Section \ref{sec:local_linear_converge}}

To proceed, consider the proximal mapping parameterized by $x$:

\begin{equation}\label{eq:prox-linear-subprob}
    z^{\epsilon}(x) \triangleq \argmin_{z \in \R^n} \left\{ h\big(c(x) + \nabla c(x)^\top (z-x)\big) + \frac{1}{2 \epsilon}\|z - x\|^{2}\right\},
    \quad \epsilon > 0.
\end{equation}
The prox-linear update can then be expressed as $x^{k+1} = z^{\epsilon}(x^k)$. By Proposition \ref{prop:equivalent_prox-linear}, this yields
\[
    x^{k+1}= z^{\epsilon}(x^k) = x^k - \epsilon \, G(x^k, \epsilon) \quad\Rightarrow\quad G(x^k, \epsilon) = \frac{x^k- z^{\epsilon}(x^k)}{\epsilon}.
\]
It is not difficult to see that any point $\bx \in (\partial f)^{-1}(0)$ is the unique optimal solution of the strongly convex problem \eqref{eq:prox-linear-subprob} parameterized by $\bx$, i.e., $z^{\epsilon}(\bx) = \bx$. To understand the behavior of $G(x, \epsilon) = (x - z^{\epsilon}(x)) / \epsilon$ near $\bx$, we study a uniform stability of the solution map $z^{\epsilon}(x)$ around a neighborhood of $(\partial f)^{-1}(0)$, as established in the lemma below.

\begin{lemma}[Uniform Holder stability near stationary points]
\label{lem:Holder_stability}
    Consider problem \eqref{eq:cvx_composite}, and denote its set of stationary points by $(\partial f)^{-1}(0)$. Let $z^{\epsilon}(x)$ be defined as in \eqref{eq:prox-linear-subprob}.
    For any compact set $X \subset \R^{n}$ and any $\bar\epsilon > 0$, there exist positive constants $\delta$ and $D$ such that
    \[
        \sup_{\epsilon \in (0,\,\bar\epsilon)}\|z^{\epsilon}(x) - x\| 
        \leq D \cdot {\dist\big(x, (\partial f)^{-1}(0) \cap X\big)}^{1/2}
    \]
    for all $x$ with $\dist\big(x, (\partial f)^{-1}(0) \cap X\big) \leq \delta$.
\end{lemma}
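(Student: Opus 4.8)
\medskip

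The plan is to combine the two-sided model bound \eqref{eq:two-sided-model} with the sufficient decrease produced by the prox-linear step, and then use the fact that $f$ is bounded below \emph{relative to the compact set of nearby stationary points}. First, I would recall that $z^{\epsilon}(x)$ is by definition the minimizer of $z \mapsto f(z;x) + \frac{1}{2\epsilon}\|z-x\|^2$, so in particular plugging in $z = x$ gives the one-step estimate
\[
    f\big(z^{\epsilon}(x); x\big) + \frac{1}{2\epsilon}\|z^{\epsilon}(x) - x\|^{2} \;\leq\; f(x;x) \;=\; f(x).
\]
Next, using the lower half of the two-sided model inequality \eqref{eq:two-sided-model}, namely $f\big(z^{\epsilon}(x)\big) - f\big(z^{\epsilon}(x); x\big) \geq -\tfrac{L\beta}{2}\|z^{\epsilon}(x) - x\|^{2}$, I would obtain
\[
    f\big(z^{\epsilon}(x)\big) + \Big(\tfrac{1}{2\epsilon} - \tfrac{L\beta}{2}\Big)\|z^{\epsilon}(x) - x\|^{2} \;\leq\; f(x),
\]
so that, restricting to $\epsilon \in (0, \bar\epsilon)$ with $\bar\epsilon$ replaced if necessary by $\min\{\bar\epsilon, 1/(2L\beta)\}$ (the claim for all $\epsilon \in (0,\bar\epsilon)$ follows since the coefficient only improves for smaller $\epsilon$, and we can absorb the truncation into $D$), we get
\[
    \|z^{\epsilon}(x) - x\|^{2} \;\leq\; \frac{2\epsilon}{1 - L\beta\epsilon}\,\big(f(x) - f(z^{\epsilon}(x))\big) \;\leq\; C\,\bar\epsilon\,\big(f(x) - \inf f\big)
\]
for a constant $C$ depending only on $L\beta$, using that $f$ is bounded from below.

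\medskip

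The key remaining point is to control $f(x) - \inf f$ — or more precisely $f(x) - f(\hx)$ for a nearest stationary point $\hx \in (\partial f)^{-1}(0) \cap X$ — by $\dist\big(x, (\partial f)^{-1}(0)\cap X\big)$ to a power that yields the square-root rate after taking square roots. Here I would use local Lipschitz continuity of $f$: since $(\partial f)^{-1}(0)\cap X$ is compact, $f$ restricted to a bounded $\delta$-neighborhood of it is globally Lipschitz with some constant $\ell$, hence for any $x$ with $\dist\big(x,(\partial f)^{-1}(0)\cap X\big) \leq \delta$, picking $\hx$ achieving this distance gives $f(x) - f(\hx) \leq \ell\,\dist\big(x,(\partial f)^{-1}(0)\cap X\big)$. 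Note $f(\hx) - \inf f \geq 0$ is of no use directly; instead I compare $f(x)$ to $f(\hx)$, and I must re-derive the one-step bound with $\hx$ in place of $\inf f$. Since $\hx$ is a stationary point, $z^{\epsilon}(\hx) = \hx$ and the prox-linear subproblem at $x$ is strongly convex, a cleaner route is: feed $z = \hx$ into the definition of $z^{\epsilon}(x)$ as minimizer,
\[
    f\big(z^{\epsilon}(x); x\big) + \tfrac{1}{2\epsilon}\|z^{\epsilon}(x)-x\|^{2} \;\leq\; f(\hx; x) + \tfrac{1}{2\epsilon}\|\hx - x\|^{2},
\]
then bound $f(\hx;x) - f(\hx) \leq \tfrac{L\beta}{2}\|\hx-x\|^2$ by the upper half of \eqref{eq:two-sided-model}, and $f(z^{\epsilon}(x)) - f(z^{\epsilon}(x);x) \leq \tfrac{L\beta}{2}\|z^{\epsilon}(x)-x\|^2$ by the lower half, together with $f(z^{\epsilon}(x)) \geq f(\hx)$ (if $\hx$ is actually the global minimum on the relevant sublevel set) — but since $\hx$ need only be a \emph{stationary} point this last inequality may fail, so instead I would keep $f(z^\epsilon(x)) - f(\hx)$ and not drop it, obtaining
\[
    \Big(\tfrac{1}{2\epsilon} - \tfrac{L\beta}{2}\Big)\|z^{\epsilon}(x)-x\|^{2} \;\leq\; \Big(\tfrac{1}{2\epsilon} + \tfrac{L\beta}{2}\Big)\|\hx - x\|^{2} + \big(f(\hx) - f(z^{\epsilon}(x))\big).
\]

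\medskip

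The main obstacle is handling the term $f(\hx) - f(z^{\epsilon}(x))$, which need not be nonpositive when $\hx$ is merely stationary. I expect to resolve it by the same Lipschitz argument used above applied to $f(\hx) - f(z^{\epsilon}(x)) \leq \ell\,\|z^{\epsilon}(x) - \hx\| \leq \ell\big(\|z^{\epsilon}(x) - x\| + \|x - \hx\|\big)$ — valid once $\delta$ is small enough that $z^{\epsilon}(x)$ stays in the Lipschitz neighborhood, which itself needs the a priori bound $\|z^{\epsilon}(x) - x\| \leq \sqrt{C\bar\epsilon(f(x) - \inf f)}$ from the first part to be small, shrinking $\delta$ further. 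Substituting and writing $r \triangleq \dist\big(x,(\partial f)^{-1}(0)\cap X\big) = \|x - \hx\|$, one gets an inequality of the form $a\,\|z^\epsilon(x)-x\|^2 \leq b\,r^2 + \ell\,r + \ell\,\|z^\epsilon(x)-x\|$ with $a$ bounded below (for $\epsilon < \bar\epsilon \leq 1/(4L\beta)$) by a positive constant uniform in $\epsilon$; solving this quadratic inequality for $\|z^\epsilon(x)-x\|$ and using $r \leq \delta \leq 1$ to absorb the $r^2$ into $r$ yields $\|z^{\epsilon}(x) - x\| \leq D\, r^{1/2}$ with $D$ independent of $\epsilon$, which is the claim. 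Throughout, the verification that $\delta$ can be chosen small enough, and uniformly in $\epsilon \in (0,\bar\epsilon)$, that $z^\epsilon(x)$ remains in a fixed compact neighborhood of $(\partial f)^{-1}(0)\cap X$ on which $f$ is Lipschitz, is the bookkeeping-heavy step to be careful about.
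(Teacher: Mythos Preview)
Your final step contains a genuine gap. After substituting the Lipschitz estimate $f(\hat x)-f(z^{\epsilon}(x))\le \ell\big(\|z^{\epsilon}(x)-x\|+\|x-\hat x\|\big)$ you arrive, with $t=\|z^{\epsilon}(x)-x\|$, $r=\|x-\hat x\|$, $a=\tfrac{1}{2\epsilon}-\tfrac{L\beta}{2}$ and $b=\tfrac{1}{2\epsilon}+\tfrac{L\beta}{2}$, at $a\,t^{2}\le b\,r^{2}+\ell r+\ell t$. Solving this quadratic gives
\[
t\;\le\;\frac{\ell}{2a}+\sqrt{\frac{\ell^{2}}{4a^{2}}+\frac{b}{a}\,r^{2}+\frac{\ell}{a}\,r}\,,
\]
whose value at $r=0$ is $\ell/a$, a fixed positive constant (of order $\ell\bar\epsilon$) when $\epsilon$ is near $\bar\epsilon$. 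Hence the bound does \emph{not} vanish as $r\to0$ uniformly in $\epsilon\in(0,\bar\epsilon)$, and the claimed conclusion $t\le D\,r^{1/2}$ does not follow. The Lipschitz estimate is simply too crude: it uses nothing about $\hat x$ beyond its being a reference point, whereas the result hinges on $\hat x$ being \emph{stationary}.

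The missing idea---and this is exactly what the paper exploits---is that stationarity of $\hat x$ means $0\in\nabla c(\hat x)\,\partial h(c(\hat x))$, so the convex model $f(\,\cdot\,;\hat x)$ is globally minimized at $\hat x$: $f(z;\hat x)\ge f(\hat x)$ for every $z$. If, instead of bounding $f(\hat x)-f(z^{\epsilon}(x))$ by Lipschitz continuity of $f$, you lower-bound $f(z^{\epsilon}(x);x)$ by comparing the two linear models,
\[
f(z^{\epsilon}(x);x)\;\ge\;f(z^{\epsilon}(x);\hat x)-L\beta\Big(\tfrac12 r^{2}+r\,t\Big)\;\ge\;f(\hat x)-L\beta\Big(\tfrac12 r^{2}+r\,t\Big),
\]
and combine this with $f(z^{\epsilon}(x);x)+\tfrac{1}{2\epsilon}t^{2}\le f(\hat x;x)+\tfrac{1}{2\epsilon}r^{2}\le f(\hat x)+\tfrac{L\beta}{2}r^{2}+\tfrac{1}{2\epsilon}r^{2}$, the offending $\ell t$ term disappears and you obtain $\tfrac{1}{2\epsilon}(t^{2}-r^{2})\le L\beta r(t+r)$, hence $t\le(1+2L\beta\bar\epsilon)\,r$---in fact a Lipschitz bound, stronger than the stated H\"older rate. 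The paper implements this same idea in a lifted $(z,u)$ formulation, where the stationarity inequality appears as $h(u)\ge h(c(\hat x))$ for $(z,u)\in\Phi(\hat x)$, and compares the feasible sets $\Phi(x)$ and $\Phi(\hat x)$ via a perturbation result; working directly with $h$ also avoids your truncation $\bar\epsilon\le 1/(4L\beta)$, which you cannot simply ``absorb into $D$'' since for $\epsilon\ge 1/(L\beta)$ the coefficient $a$ becomes nonpositive and the two-sided-model route yields no upper bound on $t$ at all.
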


\begin{remark}[Comparison with the pointwise stability]
    Applying a general perturbation result for optimal solutions~\cite[Proposition 4.37]{bonnans2013perturbation} to problem \eqref{eq:prox-linear-subprob}, for each fixed stationary point $\bx$, we have $\|z^{\epsilon}(x) - z^{\epsilon}(\bx)\| = \|z^{\epsilon}(x) - \bx\| = \mathcal{O}\big(\|x - \bx\|^{1/2}\big)$ and thus $\|z^{\epsilon}(x) - x\| = \mathcal{O}\big(\|x - \bx\|^{1/2}\big)$. In contrast, our result holds uniformly for all $x$ around $(\partial f)^{-1}(0) \cap X$, rather than a single point. Moreover, our bound depends on $\dist(x, (\partial f)^{-1}(0) \cap X)$, which is not only tighter than $\|x - \bx\|$ for stationary point $\bx \in X$ but can also be further controlled by metric subregularity in the proof of Proposition \ref{prop:constant_stepsize} (cf. Appendix B.2).
\end{remark}

\begin{proof}
    Define $\Phi(x) \triangleq \big\{(z,u) \in \R^{n}\times \R^{m}\mid u = c(x) + \nabla c(x)^\top (z-x)\big\}$ as the feasible region of problem \eqref{eq:prox-linear-subprob} parameterized by $x$, and rewrite problem \eqref{eq:prox-linear-subprob} as
    \begin{equation}\label{eq:prox-linear-subprob-lifted}
    \big(z^{\epsilon}(x), u^{\epsilon}(x)\big) \triangleq \displaystyle\argmin_{(z, u) \in \Phi(x)}\left\{
        h(u) + \frac{1}{2\epsilon}\|z-x\|^{2}
        \right\},
        \qquad\epsilon > 0.
    \end{equation}
    For any $\bx \in (\partial f)^{-1}(0)$, it is immediate that $\big(z^{\epsilon}(\bx), u^{\epsilon}(\bx)\big) = \big(\bx, c(\bx)\big)$.

    \vspace{0.05in}
    \noindent\textbf{Step 1} (Uniform stability of the feasible region $\Phi(x)$). Note that the feasible set $\Phi(x)$ is defined by an equality constraint for which the linear independence constraint qualification holds. By the stability theorem in~\cite[Theorem 2.87]{bonnans2013perturbation}, for any stationary point $\bx \in (\partial f)^{-1}(0) \cap X$, there exists a constant $C_{\bx}$ such that for all $(z, u, x^\prime)$ near $(\bx, c(\bx), \bx)$,
    \begin{equation}\label{eq:stability}
        \dist\big((z, u), \Phi(x^{\prime})\big) \leq C_{\bx} \big\|c(x^{\prime}) + \nabla c(x^{\prime})^\top (z-x^{\prime}) - u\big\|.
    \end{equation}
    Since $\partial f$ is osc and $X$ is compact, the set $(\partial f)^{-1}(0) \cap X$ is compact. By the continuity of $c(\cdot)$, the set $S \triangleq \{(\bx, c(\bx), \bx) \mid \bx \in (\partial f)^{-1}(0) \cap X \}$ is also compact. By the compactness of $S$ and the bound in \eqref{eq:stability}, we can find a uniform constant $C$ and a neighborhood $U(S)$ of $S$ such that
    \begin{equation}\label{eq:stability_uniform}
        \dist\big((z, u), \Phi(x^{\prime})\big) \leq
        C \big\|c(x^{\prime}) + \nabla c(x^{\prime})^\top (z-x^{\prime}) - u\big\|,
        \qquad\forall\, (z, u, x^{\prime}) \in U(S).
    \end{equation}
    For any $x$, let $\hx \in (\partial f)^{-1}(0) \cap X$ be such that $\|x - \hx\| = \dist\big(x, (\partial f)^{-1}(0) \cap X\big)$. 
    For all $x$ in a sufficiently small neighborhood of $(\partial f)^{-1}(0) \cap X$, we have $(\hx, c(\hx), x) \in U(S)$, so applying \eqref{eq:stability_uniform} with $(z,u,x^{\prime}) = (\hx, c(\hx), x)$ yields
    \begin{equation}\label{eq:dist_bx2x}
        \dist\big( (\hx, c(\hx)), \Phi(x)\big) 
        \leq C \big\|c(x) + \nabla c(x)^\top (\hx - x) - c(\hx) \big\| 
        \leq {C\beta} \|x - \hx\|^{2} / 2
        \leq C_1 \|x - \hx\|
    \end{equation}
    for some constant $C_1$, where the second inequality follows from the $\beta$-Lipschitz continuity of $\nabla c$.
    Next, for all $x$ sufficiently close to $(\partial f)^{-1}(0) \cap X$, take a neighborhood $\mathcal{B}(\bx, c(\bx))$ around $(\bx, c(\bx))$ such that $\mathcal{B}(\bx, c(\bx)) \times \{\hx\} \subset U(S)$ and $(z^{\epsilon}(x), u^{\epsilon}(x)) \in \mathcal{B}(\bx, c(\bx))$ for $\epsilon \in (0, \bar\epsilon)$. For such $x$, we apply \eqref{eq:stability_uniform} again with $(z, u) \in [\,\Phi(x) \cap \mathcal{B}(\bx, c(\bx))]$ and $x^{\prime} = \hx$ to obtain
    \[
    \begin{array}{rl}
        \dist \big((z,u), \Phi(\hx)\big) \leq & C \big\|c(\hx) + \nabla c(\hx)^\top (z - \hx) - c(x) - \nabla c(x)^\top (z - x) \big\|                  \\[0.07in]
        \leq                                  & C \big\|c(\hx) + \nabla c(\hx)^\top (x - \hx) - c(x)\big\| + C \big\|(\nabla c(\hx) - \nabla c(x))^\top(z - x)\big\| \\[0.07in]
        \leq                                  & {C \beta} \|x - \hx\|^2 / 2 + C \beta \|x - \hx\| \|z - x\|,
    \end{array}
    \]
    where the last inequality uses the $\beta$-Lipschitz continuity of $\nabla c$. Consequently, for all $x$ in a neighborhood of $(\partial f)^{-1}(0) \cap X$,
    \begin{equation}\label{eq:dist_x2bx}
    \begin{split}
        \D\Big(\Phi(x) \cap \mathcal{B}(\bx, c(\bx)),\, \Phi(\hx)\Big)
        &\leq \sup_{(z,u) \in [\Phi(x) \cap \mathcal{B}(\bx, c(\bx))]} \dist \big((z,u), \Phi(\hx)\big)
        \,\leq\; C_{2} \|x - \hx\|
    \end{split}
    \end{equation}
    for some constant $C_2$. So far, we have derived two bounds \eqref{eq:dist_bx2x} and \eqref{eq:dist_x2bx}, measuring the local distance between $\Phi(x)$ and $\Phi(\hx)$, from the stability theorem.
    
    \vspace{0.05in}
    \noindent\textbf{Step 2} (A quadratic inequality to bound $\|z^{\epsilon}(x) - x\|$).
    From now on, we fix $x$ in a neighborhood of $(\partial f)^{-1}(0) \cap X$ such that \eqref{eq:dist_bx2x} and \eqref{eq:dist_x2bx} hold. Recall that $\hx \in (\partial f)^{-1}(0) \cap X$ is chosen so that $\|x - \hx\| = \dist(x, (\partial f)^{-1}(0) \cap X)$. Since $\Phi(\hx)$ is a closed set, let $(z, u) \in \Phi(\hx)$ be such that $\big\|(z^{\epsilon}(x), u^{\epsilon}(x)) - (z, u)\big\| = \dist\big((z^{\epsilon}(x), u^{\epsilon}(x)),\, \Phi(\hx)\big)$.
    Then,
    \begin{equation}\label{eq:dist_gamma}
        \|u^{\epsilon}(x) - u\| 
        \leq \dist\Big((z^{\epsilon}(x), u^{\epsilon}(x)),\, \Phi(\hx)\Big) 
        \overset{\eqref{eq:dist_x2bx}}{\leq}C_{2}\|x - \hx\|.
    \end{equation}
    Since $\hx$ is a stationary point of problem \eqref{eq:cvx_composite}, $0 \in \nabla c(\hx) \, \partial h (c(\hx))$. By the convexity of $h$, the subgradient inequality yields $h(u) = h \big(c(\hx) + \nabla c(\hx)^\top(z-\hx)\big) \geq h (c(\hx))$ for any $x$. Hence,
    \begin{equation}\label{eq:func_val_lowerbound}
    \begin{array}{rl}
        \displaystyle h(u^{\epsilon}(x)) + \frac{1}{2\epsilon} \|z^{\epsilon}(x) - \hx\|^2 - h (c(\hx))
        &\geq \displaystyle h(u^{\epsilon}(x)) - h(u) + \frac{1}{2\epsilon} \|z^{\epsilon}(x) - \hx\|^2 \\[0.1in]
        &\overset{\eqref{eq:dist_gamma}}\geq \displaystyle -L C_2 \|x - \hx\| + \frac{1}{2\epsilon} \|z^{\epsilon}(x) - \hx\|^2 .
    \end{array}
    \end{equation}
    To bound the same quantity from above, we fix any pair $(z^{\prime}, u^{\prime}) \in \Phi(x)$. Since $(z^{\epsilon}(x), u^{\epsilon}(x))$ is the optimal solution to \eqref{eq:prox-linear-subprob-lifted}, we have
    $
        h(u^{\epsilon}(x)) + \frac{1}{2\epsilon} \|z^{\epsilon}(x) - x\|^{2}
        \leq h(u^{\prime}) + \frac{1}{2\epsilon} \|z^{\prime} - x\|^{2}
    $. 
    Subtracting $h(c(\hx))$ from both sides and rearranging terms, we obtain
    \begin{align*}
         & \qquad h(u^{\epsilon}(x)) + \frac{1}{2\epsilon} \|z^{\epsilon}(x) - \hx\|^2 - h (c(\hx)) \\
         & \leq h(u^{\prime}) - h (c(\hx)) + \frac{1}{2\epsilon} \big(\|z^{\prime}-x\|^{2} - \|\hx-x\|^{2}\big) + \frac{1}{2\epsilon} \big(\|\hx - x\|^{2}- \|z^{\epsilon}(x) - x\|^{2}+ \|z^{\epsilon}(x) - \hx\|^{2}\big) \\
         & \leq L \|u^{\prime}-c(\hx)\| + \frac{1}{2\epsilon} \|z^{\prime}+\hx-2x\| \|z^{\prime}-\hx\| + \frac{1}{\epsilon} \|x -\hx\| \|z^{\epsilon}(x)-\hx\|  \\[0.03in]
         & \leq \frac{1}{2\epsilon} \max\left\{2\epsilon L, \|z^{\prime}+\hx-2x\| \right\} \cdot \left\|(z^{\prime}, u^{\prime}) - (\hx, c(\hx))\right\| + \frac{1}{\epsilon} \|x - \hx\| \|z^{\epsilon}(x) - \hx\|.
    \end{align*}
    Using the estimate $\|z^{\prime}+ \hx - 2x\| \leq \|z^{\prime}- \hx\| + 2\|\hx - x\| \leq \left\|(z^{\prime}, u^{\prime}) - (\hx, c(\hx))\right\| + 2\|\hx - x\|$ and minimizing the right-hand side over $(z^{\prime}, u^{\prime}) \in \Phi(x)$, we get
    \begin{equation}
    \label{eq:func_val_upperbound}
        \begin{array}{rl}
             & \displaystyle\quad h(u^{\epsilon}(x)) + \frac{1}{2\epsilon} \|z^{\epsilon}(x) - \hx\|^2 - h (c(\hx)) \\[0.1in]
             & \;\leq\; \displaystyle \frac{1}{2\epsilon} \max\Big\{2\epsilon L,\, \dist\big((\hx, c(\hx)), \Phi(x) \big) + 2\|\hx - x\| \Big\} \cdot \dist\big((\hx, c(\hx)), \Phi(x) \big) \\[0.1in]
             &\qquad\qquad \displaystyle 
             + \frac{1}{\epsilon} \|x - \hx\| \|z^{\epsilon}(x) - \hx\| \\[0.08in]
             & \overset{\eqref{eq:dist_bx2x}}{\leq} \displaystyle \frac{1}{2\epsilon}\max\big\{2\epsilon L, (C_{1}+ 2) \|x - \hx\| \big\} \cdot C_1 \|x - \hx\| + \frac{1}{\epsilon} \|x - \hx\| \|z^{\epsilon}(x) - \hx\|.
        \end{array}
    \end{equation}
    Combining the lower bound \eqref{eq:func_val_lowerbound} and the upper bound \eqref{eq:func_val_upperbound}, we obtain the quadratic inequality:
    \[
        \beta_{1}+ \frac{1}{2\epsilon} \|z^{\epsilon}(x) - \hx\|^2 
        \leq \beta_{2} + \frac{1}{\epsilon} \|x - \hx\| \|z^{\epsilon}(x) - \hx\|,
    \]
    where $\beta_{1}\triangleq -L C_{2}\|x - \hx\|$ and $\beta_{2} \triangleq (2\epsilon)^{-1} \max\left\{2\epsilon L, (C_{1}+ 2) \|x - \hx\| \right\} \cdot C_{1} \|x - \hx\|$. Solving this inequality in terms of $\|z^{\epsilon}(x) - \hx\|$ gives
    \[
    \begin{array}{rl}
        \|z^{\epsilon}(x) - \hx\| 
        &\leq \|x - \hx\| + \sqrt{\|x - \hx\|^{2}- 2 \epsilon (\beta_{1}- \beta_{2}) }\leq 2 \|x - \hx\| + \sqrt{2\epsilon(\beta_{2}- \beta_{1})} \\[0.08in]
        &= 2 \|x - \hx\| + \sqrt{C_{1}\max\big\{2\epsilon L, \left(C_{1}+ 2\right) \|x - \hx\| \big\} + 2\epsilon L C_{2}} \cdot \sqrt{\|x - \hx\|}.
    \end{array}
    \]
    Finally, since $\|z^{\epsilon}(x) - x\| \leq \|z^{\epsilon}(x) - \hx\| + \|x - \hx \|$, taking supreme over $\epsilon \in (0, \bar\epsilon)$, we can find a constant $D > 0$ such that
    \[
        \sup_{\epsilon \in (0,\,\bar\epsilon)}\|z^{\epsilon}(x) - x\| 
        \leq D \|x - \hx\|^{\frac{1}{2}}
        = D \cdot \dist\big(x, (\partial f)^{-1}(0) \cap X \big)^{\frac{1}{2}}
    \]
    for all $x$ in a neighborhood of $(\partial f)^{-1}(0) \cap X$.
\end{proof}

\subsubsection*{B.1 Proof of Lemma \ref{lem:from_metric_subregular_to_error_bound}} 

\begin{lemma}[{\cite[Theorem 5.3]{drusvyatskiy2018error}}]
\label{lem:Variational_principle}
    Consider problem \eqref{eq:cvx_composite} with $G$ defined in \eqref{eq:G_cvx-composite}, and let $z^{\epsilon}(x)$ be the unique solution to the prox-linear subproblem \eqref{eq:prox-linear-subprob}. For any $\epsilon > 0$, there exists a point $\hx$ such that $\|z^{\epsilon}(x) - \hx\| \leq \|z^{\epsilon}(x) - x\|$ and $\dist(0, \partial f(\hx)) \leq (3L\beta + 2/\epsilon) \|z^{\epsilon}(x) - x\|$.
\end{lemma}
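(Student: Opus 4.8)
The plan is to realize $z \triangleq z^{\epsilon}(x)$ as an approximate minimizer of the \emph{true} proximal objective $\psi(w) \triangleq f(w) + \frac{1}{2\epsilon}\|w-x\|^{2}$ and then invoke Ekeland's variational principle to produce a genuinely near-stationary point $\hx$ close to $z$. Write $r \triangleq \|z-x\|$ and let $P(w) \triangleq f(w;x) + \frac{1}{2\epsilon}\|w-x\|^{2}$ be the \emph{model} proximal objective, so that $z = \argmin_{w} P(w)$ by the definition \eqref{eq:prox-linear-subprob} and $P$ is $\tfrac{1}{\epsilon}$-strongly convex. I would use only two ingredients: the two-sided model bound \eqref{eq:two-sided-model}, which gives $|f(w) - f(w;x)| \le \frac{L\beta}{2}\|w-x\|^{2}$ for all $w$; and the fact that $f$ is bounded below (since $\inf h > -\infty$ in this section), so that $\psi$ is lower semicontinuous and bounded below.

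First I would quantify the approximate optimality of $z$ for $\psi$. Combining $\psi(z) \le P(z) + \frac{L\beta}{2}r^{2}$ (the upper model bound at $z$), $\psi(w) \ge P(w) - \frac{L\beta}{2}\|w-x\|^{2}$ (the lower bound at $w$), and the strong-convexity inequality $P(w) \ge P(z) + \frac{1}{2\epsilon}\|w-z\|^{2}$, I obtain for every $w$
\[
    \psi(z) - \psi(w) \;\le\; \frac{L\beta}{2}\,r^{2} \;-\; \frac{1}{2\epsilon}\|w-z\|^{2} \;+\; \frac{L\beta}{2}\|w-x\|^{2}.
\]
Restricting $w$ to a closed ball $\overline\B_{R}(z)$ with $R$ taken just larger than $r$ and using $\|w-x\|\le\|w-z\|+r$, a one-variable maximization in $t=\|w-z\|$ shows the right-hand side is at most $\tfrac52 L\beta\,r^{2}$ uniformly in $\epsilon>0$; hence $z$ is a $\delta$-approximate minimizer of $\psi$ over $\overline\B_{R}(z)$ with the (slightly loose) choice $\delta \triangleq 3L\beta\,r^{2}$.

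Next I would apply Ekeland's variational principle to the lower semicontinuous, bounded-below function $\psi + \iota_{\overline\B_{R}(z)}$ with radius parameter $\lambda \triangleq r$. This produces a point $\hx$ with $\|\hx - z\| \le \lambda = r = \|z-x\|$ --- already the first asserted inequality --- and such that $\hx$ minimizes $w \mapsto \psi(w) + \iota_{\overline\B_{R}(z)}(w) + \frac{\delta}{\lambda}\|w-\hx\|$. Because $R$ was chosen strictly larger than $\lambda$, the point $\hx$ lies in the interior of $\overline\B_{R}(z)$, so the normal cone of the ball vanishes there and the first-order optimality condition reads $\dist\big(0, \partial\psi(\hx)\big) \le \delta/\lambda = 3L\beta\,r$. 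Finally, since $\psi$ differs from $f$ by the smooth term $\frac{1}{2\epsilon}\|\cdot-x\|^{2}$, the subdifferential sum rule gives $\partial\psi(\hx) = \partial f(\hx) + \frac{1}{\epsilon}(\hx - x)$, whence
\[
    \dist\big(0,\partial f(\hx)\big) \;\le\; \dist\big(0,\partial\psi(\hx)\big) + \frac{1}{\epsilon}\|\hx - x\| \;\le\; 3L\beta\,r + \frac{2}{\epsilon}\,r,
\]
using $\|\hx - x\|\le\|\hx - z\|+\|z-x\|\le 2r$. This is exactly $(3L\beta + 2/\epsilon)\|z^{\epsilon}(x) - x\|$, completing the proof.

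The main obstacle I anticipate is the localization. For large $\epsilon$ the perturbation bounding $|f - f(\cdot;x)|$ is of the same quadratic order as the prox term, so $z$ need \emph{not} be an approximate global minimizer of $\psi$ and the global form of Ekeland is unavailable; this forces the restriction to a ball $\overline\B_{R}(z)$. The delicate point is then to keep $\lambda < R$ so that the Ekeland point $\hx$ is interior, since otherwise the normal cone of the ball would contaminate the subgradient estimate. Carefully tracking the one-dimensional trade-off between the $-\frac{1}{2\epsilon}t^{2}$ and $\frac{L\beta}{2}(t+r)^{2}$ terms, so as to certify the clean constant $3L\beta$ uniformly over all $\epsilon>0$, is the only genuinely quantitative step.
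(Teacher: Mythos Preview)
The paper does not prove this lemma at all; it simply quotes it from \cite[Theorem 5.3]{drusvyatskiy2018error} and uses it as a black box in the proof of Lemma~\ref{lem:from_metric_subregular_to_error_bound}. Your Ekeland-based argument is correct and is essentially the same proof given in the cited source: one shows that $z^{\epsilon}(x)$ is an $O(L\beta r^{2})$--approximate minimizer of $\psi(w)=f(w)+\tfrac{1}{2\epsilon}\|w-x\|^{2}$ on a ball of radius comparable to $r=\|z^{\epsilon}(x)-x\|$, invokes Ekeland with $\lambda=r$, and then translates the resulting approximate stationarity of $\psi$ into an estimate on $\dist(0,\partial f(\hx))$ via the sum rule and the triangle inequality $\|\hx-x\|\le 2r$.

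One small imprecision: for $R$ strictly larger than $r$ the one-variable bound is not exactly $\tfrac{5}{2}L\beta r^{2}$ but slightly more (the worst case over $\epsilon$ occurs at $t=R$, giving $L\beta r^{2}+L\beta rR+\tfrac{L\beta}{2}R^{2}$). Your slack choice $\delta=3L\beta r^{2}$ absorbs this as long as $R\le(\sqrt{5}-1)r\approx 1.236\,r$, so the argument goes through; just make the choice of $R$ explicit (e.g.\ $R=1.1\,r$) so that both $\lambda=r<R$ and $\delta\le 3L\beta r^{2}$ hold simultaneously.
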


We now prove Lemma \ref{lem:from_metric_subregular_to_error_bound} using Lemmas \ref{lem:Holder_stability} and \ref{lem:Variational_principle}.
\begin{proof}[Proof of Lemma \ref{lem:from_metric_subregular_to_error_bound}]
    Let $\tau > 0$ be such that the inequality of metric subregularity holds for all $x \in \B_{\tau}(\bx)$. Applying Lemma \ref{lem:Holder_stability} with $X = \{\bx\}$, there exist positive scalars $\delta$ and $D$ such that
    \begin{equation}
    \label{eq:stability_delta_close}
        \sup_{\epsilon \in (0,\,\bar\epsilon)}\|z^{\epsilon}(x) - x\| \leq D \|x - \bx\|^{\frac{1}{2}},
        \qquad\forall\,x \in \B_{\delta}(\bx).
    \end{equation}
    Fix any $\varepsilon \in (0, \delta)$ such that ${\varepsilon + 2D \sqrt{\varepsilon}} \leq \tau$, and consider a point $x \in \B_{\varepsilon}(\bx) \subset \B_{\delta}(\bx)$. By Lemma \ref{lem:Variational_principle}, there exists a point $\hx$ satisfying $\|z^{\epsilon}(x) - \hx\| \leq \|z^{\epsilon}(x) - x\|$ and $\dist(0, \partial f(\hx)) \leq (3L\beta + 2/\epsilon) \|z^{\epsilon}(x) - x\|$. Combining this with the bound \eqref{eq:stability_delta_close}, we obtain
    \[
    \begin{array}{rl}
        \|\bx - \hx\| &\;\leq\; \|\bx - x\| + \|x - z^{\epsilon}(x)\| + \|z^{\epsilon}(x) - \hx \| \leq \|\bx - x\| + 2\|z^{\epsilon}(x) - x\| \\
        &\overset{\eqref{eq:stability_delta_close}}{\leq} \|\bx - x\| + 2D \|x - \bx\|^{\frac{1}{2}}
        \;\leq\; \varepsilon + 2D \sqrt{\varepsilon} 
        \;\leq\; \tau.
    \end{array}
    \]
    Hence $\hx \in \B_{\tau}(\bx)$, and $\dist(\hx, (\partial f)^{-1}(0)) \leq \kappa \cdot \dist\big(0, \partial f(\hx)\big)$ by metric subregularity. Then,
    \[
    \begin{array}{rl}
        \dist\big(x, (\partial f)^{-1}(0)\big)
        & \leq \|x - z^{\epsilon}(x)\| + \|z^{\epsilon}(x) - \hx\| + \dist\big( \hx, (\partial f)^{-1}(0) \big) \\[0.08in]
        & \leq 2 \|z^{\epsilon}(x) - x\| + \kappa \cdot \dist(0, \partial f(\hx)) 
        \,\leq\, \big(2 + (3L\beta + 2/\epsilon) \kappa\big) \cdot \|z^{\epsilon}(x) - x\|.
    \end{array}
    \]
    Finally, since $\|z^{\epsilon}(x) - x\| = \epsilon \|G(x, \epsilon)\|$, the result follows.
\end{proof}

\subsubsection*{B.2 Proof of Proposition \ref{prop:constant_stepsize} (Eventual constant stepsize)}
\begin{proof}
    Suppose, for the sake of contradiction, that $\epsilon_{k,0}\downarrow 0$. Then there exists an index set $N \in \N^{\sharp}_{\infty}$ such that $\|g^{k, i_k}\| \to_{N} 0$, and the ratio test fails for all $k \in N$, i.e.,
    \begin{equation}\label{eq:ratio_test_fail}
        \frac{\tilde\epsilon_{k}\|\tilde g^k\|}{\|g^{k, i_k}\|^{1/2}} > \frac{(\epsilon_{k,i_k})^{1/2}}{\epsilon_{k,0}}.
    \end{equation}
    By Corollary \ref{cor:uniform_descent_direc} and given $\alpha \in (0,1)$, any $\epsilon_{k,i} \in (0, 1/(2L \beta) ]$ with stepsize $\eta = \epsilon_{k,i}$ ensures the descent condition. Thus, $\epsilon_{k,i_k}\geq \min\left\{\epsilon_{k,0}, (4 L \beta)^{-1}\right\}$, which implies that the right-hand side of \eqref{eq:ratio_test_fail} diverges to infinity as $k (\in N) \to +\infty$. Since $f(x^k) \leq f(x^0)$ for all $k$, the sequence $\{x^k\}_{k \in N}\subset X$ is bounded. Let $\{x^k\}_{k \in N^\prime}$ be a convergent subsequence such that $x^k \to_{N^\prime} \bx$ for some $N^\prime \in \N^{\sharp}_{\infty}$ with $N^{\prime}\subset N$. Because $\|g^{k, i_k}\| \to_{N}0$, it follows from \eqref{eq:Def_G1} that $0 \in \partial f(\bx)$. By the continuity of $f$, we have $f(x^k) \to_{N^\prime} f(\bx)$ and $f(\bx) \leq f(x^0)$, so $\bx \in (\partial f)^{-1}(0) \cap X$.

    Since $\partial f$ is metrically subregular at $\bx$ with some modulus $\kappa_{\bx} > 0$, Lemma \ref{lem:from_metric_subregular_to_error_bound} with $\bar\epsilon = \epsilon_{0,0}$ guarantees the existence of a positive number $\tau$ such that the error bound property holds:
    \[
        \dist\big(x, (\partial f)^{-1}(0)\big) \leq ( (3L \beta \epsilon + 2) \kappa_{\bx}+ 2\epsilon ) \cdot \|G(x, \epsilon)\|,
        \qquad\forall\, x \in \B_{\tau}(\bx),\, \epsilon \in (0, \epsilon_{0,0}).
    \]
    In particular, for all sufficiently large $k \in N^{\prime}$, we have
    \begin{equation}
    \label{eq:ErrorBound_specify}
        \dist\big(x^k, (\partial f)^{-1}(0) \big) \leq \big( (3L \beta \epsilon_{k,i_k}+ 2) \kappa_{\bx}+ 2\epsilon_{k,i_k} \big) \cdot \|g^{k, i_k}\| .
    \end{equation}
    For each $k \in N^{\prime}$, let $\hx^k\in (\partial f)^{-1}(0)$ be such that $\|x^k- \hx^k\| = \dist\left(x^k, (\partial f)^{-1}(0) \right)$. Observing that the right-hand side of \eqref{eq:ErrorBound_specify} converges to $0$ as $k(\in N^{\prime}) \to +\infty$, we have $\|x^k- \hx^k\| \to_{N^\prime} 0$, and hence $\hx^k\to_{N^\prime}\bx$. This yields the existence of a compact set $\widehat{X}$ containing $\{\hx^k\}_{k \in N^\prime}$. By Lemma \ref{lem:Holder_stability} with $\bar\epsilon=(a_0)^{1/4}$, there exists $D > 0$ such that for all sufficiently large $k \in N^{\prime}$,
    \[
        \big\|z^{\tilde\epsilon_k} (x^k) - x^k\big\| 
        \leq D \cdot \dist\Big(x^k, (\partial f)^{-1}(0) \cap \widehat{X}\Big)^{\frac{1}{2}}
        = D \|x^k- \hx^k\|^{\frac{1}{2}}
        = D \cdot \dist\big(x^k, (\partial f)^{-1}(0) \big)^{\frac{1}{2}}.
    \]
    Combining this with the error bound \eqref{eq:ErrorBound_specify},
    we obtain
    \[
        \tilde\epsilon_{k}\|\tilde g^k\| 
        = \big\|z^{\tilde\epsilon_k}(x^k) - x^k\big\| 
        \leq D \big((3L \beta \epsilon_{k, i_k}+ 2) \kappa_{\bx} + 2 \epsilon_{k, i_k}\big)^{\frac{1}{2}} \cdot \|g^{k, i_k}\|^{\frac{1}{2}} \,.
    \]
    Dividing both sides by $\|g^{k, i_k}\|^{1/2}$ gives
    \begin{align*}
        \frac{\tilde\epsilon_{k}\|\tilde g^k\|}{\|g^{k, i_k}\|^{1/2}}\leq D \big((3L \beta \epsilon_{k, i_k}+ 2) \kappa_{\bx} + 2 \epsilon_{k, i_k}\big)^{\frac{1}{2}}
        \longrightarrow_{N^\prime} 0,
    \end{align*}
    which contradicts the divergence ${\tilde\epsilon_{k}\|\tilde g^k\|}/{\|g^{k, i_k}\|^{1/2}} \to +\infty$ in \eqref{eq:ratio_test_fail}. 
    Therefore, the sequence $\{\epsilon_{k,0}\}$ cannot converge to zero. There exists $\underline\epsilon > 0$ such that $\epsilon_{k,0}= \underline
    \epsilon$ for all sufficiently large $k$. Using Corollary
    \ref{cor:uniform_descent_direc} again, we conclude that
    \[
        \eta_{k}
        \geq \epsilon_{k,i_k}\geq \min\left\{\epsilon_{k,0}, ({4 L \beta})^{-1}\right\} 
        \geq \min\left\{\underline\epsilon, ({4 L \beta})^{-1}\right\},
        \qquad\forall\, k \in \N.
    \]
\end{proof}

\end{document}